\newcommand{\noun}[1]{\textsc{#1}}
\numberwithin{equation}{section}
\numberwithin{figure}{section}
  \theoremstyle{remark}
  \newtheorem*{rem*}{\protect\remarkname}
\theoremstyle{plain}
\newtheorem{thm}{\protect\theoremname}[section]
  \theoremstyle{plain}
  \newtheorem{lem}[thm]{\protect\lemmaname}
  \theoremstyle{plain}
  \newtheorem{prop}[thm]{\protect\propositionname}
  \theoremstyle{definition}
  \newtheorem{example}[thm]{\protect\examplename}
  \theoremstyle{remark}
  \newtheorem{rem}[thm]{\protect\remarkname}
  \theoremstyle{definition}
  \newtheorem{defn}[thm]{\protect\definitionname}
  \theoremstyle{plain}
  \newtheorem{cor}[thm]{\protect\corollaryname}
\def\F{\mathcal{F}}
\def\QQ{\mathbb{Q}}
\def\RR{\mathbb{R}}
\def\CC{\mathbb{C}}
\def\ZZ{\mathbb{Z}}
\def\PP{\mathbb{P}}
\def\del{\partial}
\def\k12{\mathcal{K}_{\lambda_1,\lambda_2}}
\def\tk12{\tilde{\mathcal{K}}_{\lambda_1,\lambda_2}}
\def\ck12{\check{\mathcal{K}}_{\lambda_1,\lambda_2}}
\def\Li{Li}
\def\si{\sigma}
\def\la{\lambda}
\def\x{\mathrm{X}}
\def\y{\mathrm{Y}}
\def\z{Z_{\Delta}}
\def\n{N_{\Delta}}
\theoremstyle{definition}
\theoremstyle{definition}
\theoremstyle{theorem}
\theoremstyle{theorem}
\theoremstyle{theorem}
\theoremstyle{definition}
  \providecommand{\corollaryname}{Corollary}
  \providecommand{\definitionname}{Definition}
  \providecommand{\examplename}{Example}
  \providecommand{\lemmaname}{Lemma}
  \providecommand{\propositionname}{Proposition}
  \providecommand{\remarkname}{Remark}
\providecommand{\theoremname}{Theorem}
\newcommand{\Res}{\operatorname{Res}}
\newcommand{\ord}{\operatorname{ord}}
\newcommand{\cD}{\mathcal{D}}
\newcommand{\cP}{\mathcal{P}}
\newcommand{\cS}{\mathcal{S}}
\newcommand{\cT}{\mathcal{T}}
\begin{document}

\title{Simplicial Abel-Jacobi maps and reciprocity laws}

\author[Burgos Gil, Kerr, Lewis, and Lopatto]{Matt Kerr, James Lewis and Patrick Lopatto\\
with an appendix by Jos\'e Ignacio Burgos-Gil }

\subjclass[2000]{14C25, 14C30, 14C35}

\keywords{{Abel-Jacobi map, regulator, Deligne cohomology, higher Chow group, reciprocity laws, functional equations}}
\begin{abstract}
We describe an explicit morphism of complexes that induces the cycle-class
maps from (simplicially described) higher Chow groups to rational
Deligne cohomology. The reciprocity laws satisfied by the currents
we introduce for this purpose are shown to provide a clarifying perspective
on functional equations satisfied by complex-valued di- and trilogarithms.

\tableofcontents{}
\end{abstract}

\maketitle

\section{Introduction}

Abel-Jacobi maps for higher Chow groups\begin{equation}\label{eqn intro (*)}AJ_{\x}^{p,n}:\, CH^p(\x,n)_{\QQ} \to H^{2p-n}_{\mathcal{H}}\left( \x^{an}_{\CC},\QQ(p)\right)
\end{equation}were introduced (for smooth quasi-projective $\x$ over $k\subset\CC$)
in \cite{Ke1,KLM} via an extension of Griffiths's formula for $n=0$
to a quasi-isomorphic subcomplex of the cubical Bloch complex $Z^{p}(\x,\bullet)_{\QQ}$.
Together with their extension to motivic cohomology $H_{\mathcal{M}}^{2p-n}(\x,\QQ(n))$
in the singular case \cite{KL}, these $AJ$-maps have been used (for
example) to interpret limits of normal functions of geometric origin
\cite{GGK}, to study toric and Eisenstein symbols on families of
Calabi-Yau varieties \cite{DK}, to compute a family of Feynman integrals
\cite{BKV}, and to study torsion in $CH^{p}(\x,n)$ \cite{Pe} (though
an integral moving lemma is still missing for this to work in general).

\subsection*{Summary}

The main purpose of this paper is to give an alternate formula (Theorem
\ref{AJ Thm}) for \eqref{eqn intro (*)} on the \emph{simplicial}
Bloch complex $Z_{\Delta}^{p}(\x,\bullet)_{\QQ}$, sending a precycle%
\footnote{We use the term ``precycle'' for an element of Bloch's complex,
and ``{[}higher Chow{]} cycle'' for an element of $CH^{p}(\x,n)_{\QQ}$
(equivalence class of a closed precycle).%
} $\mathfrak{Z}$ to a triple of currents $(T_{\mathfrak{Z}}^{\Delta},\Omega_{\mathfrak{Z}}^{\Delta},R_{\mathfrak{Z}}^{\Delta})$
(cf. \eqref{eqn 3.3}) on $\x$. (We shall restrict for simplicity
to the smooth projective case, so that the absolute Hodge cohomology
in \eqref{eqn intro (*)} is just Deligne cohomology; the generalization
to quasi-projective is exactly as in $\S3$ of \cite{KL}.) This had
been a goal of the authors of \cite{KLM}, but seemed out of reach
at the time. The basic currents on $\mathbb{P}^{n}$ we develop for
this purpose in $\S$2 (see \eqref{eqnRn}, \eqref{eqnS}, and \eqref{eqnRsimp})
lead to two ``simplicial'' reciprocity laws (Theorems \ref{thm rec law A}
and \ref{thm Rec Law B}) for their integrals over subvarieties of
projective space, which are applied to directly recover functional
equations for \emph{complex}-valued di- and tri-logarithms in $\S\S$5-6.
The second of these laws takes a very intriguing form, and leads to
a more straightforward proof of the Kummer-Spence relation \eqref{6eq1}
than for the real-valued trilogarithm in \cite{Go3}. This paper is
written in such a way that the reader interested only in these applications
can skip $\S3$ entirely.

The $AJ$ formulas of \cite{KLM} were based on the \emph{cubical}
higher Chow complex for several reasons, including the greater ease
of constructing good currents on $\square^{n}$ (not to mention explicit
cycles in the cubical complex), the availability of bounding membranes
(to provide a link to the extension class definition of $AJ$), and
the greater naturality of cup-products in the cubical setting. On
the other hand, the simplicial formulation of higher Chow groups allows
for linear higher cycles, which provide direct links to the seminal
work of Goncharov on polylogarithms (cf. \cite{Go1}-\cite{Go4})
and to the cohomology of the general linear group \cite{dJ}. These
special features make a compelling argument for revisiting \cite{KLM}
from the simplicial point of view.

Moreover, we point out that, up to this point, there has not even
been a correct real regulator formula on the simplicial level. While
it was checked in $\S$3.1 of \cite{Ke1} that Goncharov's currents
in \cite{Go1} yield the real regulator (i.e., composition of $AJ_{\x}^{p,n}$
with $\pi_{\RR}:\, H_{\mathcal{H}}^{2p-n}(\x,\QQ(p))\to H_{\mathcal{H}}^{2p-n}(\x,\RR(p))$)
on the cubical complex, it turns out that the simplicial version constructed
in $\S$6.1 of \cite{Go1} (or $\S$2 of \cite{Go2}) is neither well-defined
nor a map of complexes. The main problem is that $Z_{\Delta}^{p}(\x,n)_{\QQ}$
is a subgroup of $Z^{p}(\x\times\PP^{n})_{\QQ}/Z^{p}(\x\times\mathrm{H}_{n})_{\QQ}$,
where $\mathrm{H}_{n}\subset\PP^{n}$ is the hyperplane defined by
$X_{0}+\cdots+X_{n}=0$,%
\footnote{The Roman script $\mathrm{X}$ and $\mathrm{Y}$ are used throughout
to denote varieties, while $X,Y$ denote projective coordinates; this
convention is not followed for other letters.%
} and the currents of {[}op. cit.{]} do not vanish on $Z^{p}(\x\times\mathrm{H}_{n})_{\QQ}$.
For more details, see Remark \ref{rem Go}.

\subsection*{Some motivation}

On any $\PP^{m}$, with coordinates $[Y_{0}:\cdots:Y_{m}]$ (and $y_{i}:=Y_{i}/Y_{0}$),
one has a natural $\text{dlog}$-form
\[
\Omega_{m}^{\Delta}(Y_{0}:\ldots:Y_{m}):=\Omega_{m}\left(\tfrac{Y_{1}}{Y_{0}},\ldots,\tfrac{Y_{m}}{Y_{0}}\right):=\tfrac{dy_{1}}{y_{1}}\wedge\cdots\wedge\tfrac{dy_{m}}{y_{m}}
\]
and real $m$-chain (see \eqref{eqn T} for orientation)
\[
T_{m}^{\Delta}(Y_{0}:\cdots:Y_{m}):=\{y_{1},\ldots,y_{m}\in\RR_{+}\};
\]
we may regard both as $m$-currents. The constructions of this paper
center around the existence of sequences of $(m-1)$-currents
\[
\mathfrak{R}_{m}\in D^{m-1}(\PP^{m})\;\;\;\;\;(m\ge1)
\]
satisfying two properties. To motivate the first property (\eqref{eqn !1p4}
below), consider the family
\[
\mathrm{X}_{s}:=\left\{ \prod_{i=0}^{2n}X_{i}-s\sum_{i=0}^{2n}X_{i}^{2n+1}=0\right\} \subset\PP^{2n}
\]
of Calabi-Yau $(2n-1)$-folds,%
\footnote{We shall ignore the fact that this family is not semistable at $s=0$.%
} and let $\PP^{n}\cong\mathrm{P}_{t}\subset\PP^{2n}$ be a family
of linear $n$-planes%
\footnote{Think of $t$ as varying in some neighborhood of $0$ in a $\CC^{M}$,
with $\Gamma_{t}$ the union of $\{\mathrm{P}_{t}\}$ over a radial
segment $\overrightarrow{0.t}$.%
} with $\partial\Gamma_{t}=\mathrm{P}_{t}-\mathrm{P_{0}}$ ($\Gamma_{t}=(2n+1)$-chain).
Setting $\mathfrak{Z}_{s,t}:=\mathrm{P}_{t}\cdot\mathrm{X}_{s}$,
we have $0=[\mathfrak{Z}_{s,t}-\mathfrak{Z}_{s,0}]\in CH^{n}(\mathrm{X}_{s})$;
in particular, writing $\omega_{s}:=Res_{\mathrm{X}_{s}}\left(F_{s}^{-1}\sum_{i=0}^{2n}(-1)^{i}dX_{0}\wedge\cdots\wedge\widehat{dX_{i}}\wedge\cdots\wedge dX_{2n}\right),$
the Griffiths Abel-Jacobi integrals $\int_{\Gamma_{t}\cdot\mathrm{X_{s}}}\omega_{s}$
\emph{vanish}. Taking $s\to0$ and writing $\mathrm{X}_{0}=\cup\mathrm{X}_{0}^{j}$,
where $\mathrm{X}_{0}^{j}=\{X_{j}=0\}\cong\PP^{2n-1}\underset{\rho_{j}}{\hookrightarrow}\PP^{2n}$,
we obtain \begin{equation}\label{eqn ??p3}0=\int_{\Gamma_t \cdot \mathrm{X}_0} \omega_0 = \sum_{j=0}^{2n}(-1)^j\int_{\Gamma_t\cdot \mathrm{X}_0^j }\Omega^{\Delta}_{2n-1}\left( X_0:\cdots:\widehat{X_j}:\cdots:X_{2n}\right) .
\end{equation}

Now suppose that for each $m$ \begin{equation}\label{eqn !1p4}d[\mathfrak{R}_m] = \Omega^{\Delta}_m - (2\pi i)^m\delta_{T^{\Delta}_m} + 2\pi i\sum_{j=0}^m (-1)^j {\rho_j}_* \mathfrak{R}_{m-1}
\end{equation}holds. Then on $\mathrm{X}_{0}$, $\omega_{0}\equiv d[\sum(-1)^{j}{\rho_{j}}_{*}\mathfrak{R}_{2n-1}]$
modulo $\ZZ(2n-1)$-valued currents, and by Stokes's theorem \eqref{eqn ??p3}
gives
\[
0=\sum_{j=0}^{2n}(-1)^{j}\int_{\mathfrak{Z}_{0,t}^{j}-\mathfrak{Z}_{0,0}^{j}}\mathfrak{R}_{2n-1}
\]
(where $\mathfrak{Z}_{0,t}^{j}:=\mathfrak{Z}_{0,t}\cdot\mathrm{X}_{0}^{j}$),
which shows that \begin{equation}\label{eqn ?p4}\sum_{j=0}^{2n}(-1)^j\int_{\mathfrak{Z}_{0,t}^j} \mathfrak{R}_{2n-1} \; \; \;\; \text{is constant.}
\end{equation}In fact,  according to the first of the ``reciprocity laws'' in
$\S4$, this constant belongs to $\ZZ(2n-1)$, and the proof is simpler
than the argument just given. (The second of the two laws, however,
is more subtle.) If $n=1$ and the $\mathrm{P}_{t}$ are lines, \eqref{eqn ?p4}
is just $\log(x)-\log(y)+\log(y/x)\underset{\ZZ(1)}{\equiv}0$.

To motivate the second property, suppose we would like to have lifts
$\tilde{\varepsilon}_{n}\in H_{meas}^{2n-1}\left(GL_{n}(\CC),\CC/\ZZ(n)\right)$
of the Borel classes 
\[
\varepsilon_{n}\in H_{cont}^{2n-1}\left(GL_{n}(\CC),\RR\right):=H^{2n-1}\left\{ \text{Cont}\left(GL_{n}(\CC)^{\times(\bullet+1)}\right)^{GL_{n}(\CC)},\delta\right\} 
\]
(e.g., $\varepsilon_{1}(g_{0},g_{1})\propto\log|g_{1}/g_{0}|$ and
\[
\varepsilon_{2}(g_{0},g_{1},g_{2},g_{3})\propto D(CR([g_{0}v],[g_{1}v],[g_{2}v],[g_{3}v])),
\]
where $D$ is the Bloch-Wigner function and $v\in\CC^{2}$ is fixed).
For instance, one might use such lifts to detect elements (particularly
torsion ones) of $H_{2n-1}(GL_{n}(\mathbb{F}),\ZZ)$ or to construct
complex lifts of hyperbolic volume. Recall that Bloch's higher Chow
complexes were originally defined in their simplicial formulation:
writing $\Delta^{n}:=\PP^{n}\backslash\mathrm{H}_{n}$, $\partial\Delta^{n}:=\cup_{j=0}^{n}\rho_{j}\left(\PP^{n-1}\backslash\mathrm{H}_{n-1}\right)$,
$(\partial)\Delta_{\mathrm{X}}^{n}:=\mathrm{X}\times(\partial)\Delta^{n}$,
the subgroups $Z_{\Delta}^{p}(\mathrm{X},n)\leq Z^{p}(\Delta_{\mathrm{X}}^{n})=\tfrac{Z^{p}(\mathrm{X}\times\PP^{n})}{Z^{p}(\mathrm{X}\times\mathrm{H}_{n})}$
(generated by subvarieties meeting faces of $\partial\Delta_{\mathrm{X}}^{n}$
properly) form a complex $Z_{\Delta}^{p}(\mathrm{X},\bullet)_{\QQ}$
under $\partial=\sum(-1)^{j}\rho_{j}^{*}$ with homology $CH^{p}(\mathrm{X},n)_{\QQ}$.
The relevant case is where $\mathrm{X}=Spec(\CC)$ is a point.

If the $\{\mathfrak{R}_{m}\}$ satisfy the additional property\begin{equation} \label{p5!2}
\mathfrak{R}_m |_{\mathrm{H}_m} =0,
\end{equation}then we can use them to induce Abel-Jacobi maps\begin{equation} \label{p5!!!}
CH^n (Spec(\CC),2n-1) \overset{AJ^{\Delta}}{\longrightarrow} \CC/\ZZ(n) 
\end{equation}by integrating $(2\pi i)^{n-1}\mathfrak{R}_{2n-1}$ over a cycle $\mathfrak{Z}$.
Composing this with the map
\[
H_{2n-1}\left(GL_{n}(\CC),\ZZ\right)\longrightarrow CH^{n}\left(Spec(\CC),2n-1\right)
\]
defined by%
\footnote{See \cite{dJ} for details of this construction.%
} fixing $v\in\CC^{n}$ and sending a tuple $\underline{g}:=(g_{0},\ldots,g_{2n-1})\in GL_{n}(\CC)^{\times2n}$
(in general position) to
\[
\mathfrak{Z}_{\underline{g}}:=\left\{ \left(\begin{array}{ccc}
\uparrow &  & \uparrow\\
g_{0}v & \cdots & g_{2n-1}v\\
\downarrow &  & \downarrow
\end{array}\right)\left(\begin{array}{c}
X_{0}\\
\vdots\\
X_{2n-1}
\end{array}\right)=0\right\} \subset\Delta^{2n-1},
\]
we apparently obtain a candidate for $\tilde{\varepsilon}_{n}$. In
fact, this is a bit glib as $AJ^{\Delta}$ is not defined on all of
$Z_{\Delta}^{2n-1}(Spec(\CC),\bullet)$, but only on a subcomplex
$Z_{\Delta,\RR}^{2n-1}(Spec(\CC),\bullet)$ of precycles well-behaved
with respect to the currents. So far we only know this subcomplex
is \emph{rationally} quasi-isomorphic (Proposition \ref{ML1}), and
so \eqref{p5!!!} only maps to $\CC/\QQ(n)$. Nevertheless, the direct
formula 
\[
\tilde{\varepsilon}_{n}(\underline{g}):=(2\pi i)^{1-n}\int_{\mathfrak{Z}_{\underline{g}}}\mathfrak{R}_{2n-1}
\]
appears to give a measurable cohomology class with $\CC/\ZZ(n)$ coefficients,
which should be investigated further.

Finally, to give the reader a flavor of what sort of concrete computation
is possible with our $AJ$ formula in the simplest case, where $\mathrm{X}$
is a point over a number field, consider the element $\mathfrak{Z}\in Z_{\Delta}^{2}\left(Spec(\QQ(\zeta)),3\right)_{\QQ}$
($\zeta=e^{\frac{2\pi i}{3}}$) defined by $\mathfrak{Z}:=\mathfrak{Z}_{1}+\mathfrak{Z}_{2}:=$
\[
\left[-Z(Z-\zeta^{2}W)^{2}:W(Z-\zeta W)(Z-\zeta^{2}W):-W^{3}:Z^{3}\right]_{[Z:W]\in\PP^{1}}
\]
\[
+\left[-3Z(Z+\zeta^{2}W):3Z^{2}:-W^{2}:W^{2}\right]_{[Z:W]\in\PP^{1}}.
\]
We have $\partial\mathfrak{Z}=-[3\zeta:-1:1]+[3\zeta:-1:1]=0$,%
\footnote{See \eqref{boundary}. Note that the intersections with coordinate
hyperplanes which lie inside $\mathrm{H}_{3}$ do not count, since
$\rho_{j}^{*}\mathrm{H}_{3}=\mathrm{H}_{2}$ and $Z_{\Delta}^{2}(\mathrm{X},2)\leq Z^{2}(\mathrm{X}\times\PP^{2})/Z^{2}(\mathrm{X}\times\mathrm{H}_{2}).$%
} and so this defines a higher Chow cycle $[\mathfrak{Z}]$, whose
image under the (simplicially defined) map
\[
AJ_{\mathrm{X}}^{2,3}:\, CH^{2}\left(Spec(\QQ(\zeta)),3\right)\to\CC/\QQ(2)
\]
is computed by integrating the 2-current 
\[
\mathfrak{R}:=\tfrac{1}{2\pi i}R_{3}\left(\tfrac{X_{1}+X_{2}+X_{3}}{-X_{0}},\tfrac{X_{2}+X_{3}}{-X_{1}},\tfrac{X_{3}}{-X_{2}}\right)
\]
on $\PP^{3}$ (cf. \eqref{eqnRn}) over $\mathfrak{Z}$. Since $-\tfrac{X_{3}}{X_{2}}\equiv1$
on $\mathfrak{Z}_{2}$, we have\begin{align*}
AJ([\mathfrak{Z}]) &= \int_{\mathfrak{Z}_1} \mathfrak{R} \\ &= \tfrac{1}{2\pi i} \int_{\PP^1} R_3\left( \tfrac{Z^3 - W^3 + W(Z-\zeta W)(Z-\zeta^2 W)}{Z(Z-\zeta^2 W)^2},\tfrac{W^3 - Z^3}{W(Z-\zeta W)(Z-\zeta^2 W)} ,\tfrac{Z^3}{W^3} \right) \\ &=  \tfrac{1}{2\pi i} \int_{\PP^1} R\left(\tfrac{t-\zeta}{t-\zeta^2},1-t,t^3\right) \\ &=  3\int_{\zeta}^{\zeta^2} \log(1-t)\text{dlog}(t) \\ &=  3\left( Li_2(\zeta^2)-Li_2(\zeta) \right) \\ &=  -3\sqrt{3} L\left( \chi_{-3},2\right) .
\end{align*}
\begin{rem*}
Throughout this paper, all cycle groups are taken with rational coefficients;
henceforth, we drop the subscript $\QQ$ used above. This choice reflects
the fact that we do not yet know how to prove Propositions \ref{ML1}
and \ref{ML2} (or some substitute) integrally. (This will be necessary
to enjoy the real benefits of the simplicial $AJ$ map when $\mathrm{X}$
is the spectrum of a number field, since in this case the main point
of lifting from $\RR(n-1)$ to $\CC/\ZZ(n)$ is probably to extract
torsion information.) Also note that in sections 3 and 6 we have relegated
to appendices those technical details which we judged to interrupt
the main line of argument (proofs of moving lemmas, etc.)
\end{rem*}

\subsubsection*{Acknowledgments:}

The authors wish to thank H. Gangl, B. Kahn, S. M\"uller-Stach, and especially J. Burgos
Gil for discussions related to this work; and to emphasize their debt
to A. Goncharov, whose work has inspired much of what we have tried
to do. They gratefully acknowledge partial support through NSF grants
DMS-1068974 and DMS-1361147 (M.K.), a grant from the Natural Sciences
and Engineering Research Council of Canada (J.L.), and from Washington
University's Office of Undergraduate Research (P.L.). Much of this
paper was written while M.K. was a member of the Institute for Advanced
Study, and he thanks the IAS for excellent working conditions and
the Fund for Mathematics for financial support.

\section{Two classes of simplicial currents}
\label{sec:two-class-simpl}

The explicit formulas for Abel-Jacobi maps for higher Chow groups
in \cite{KLM} were enabled by the construction of triples $(R_{n},\Omega_{n},T_{n})$
of currents on each $(\mathbb{P}^{1})^{n}$ with the telescoping property
\begin{equation}\label{eqnTelescope}d[R_n] = \Omega_n -(2\pi i)^n\delta_{T_n}+2\pi i \sum_{j=1}^n (-1)^{j}\left(  (\imath^0_j )_* R_{n-1} - (\imath^{\infty}_j)_* R_{n-1} \right), 
\end{equation}where $\imath_{j}^{\epsilon}:(\PP^{1})^{n-1}\hookrightarrow(\PP^{1})^{n}$
are the inclusions of the coordinate hyperplanes $z_{j}=\epsilon$.
We briefly recall their definition: let $T_{f}:=f^{-1}(\mathbb{R}_{<0})$
be oriented so that $\partial T_{f}=(f)$, and $\log(\cdot)$ denote
the discontinuous function with $\arg\in(-\pi,\pi]$. Writing $\varepsilon:=(-1)^{n-1}2\pi i$,
we set \begin{equation}\label{eqnRn}\begin{matrix} R_n := R_n (z_1, \ldots ,z_n) := \\ \\ \sum_{j=1}^n \varepsilon^{j-1} \log(z_j) \frac{dz_{j+1}}{z_{j+1}} \wedge \cdots \wedge \frac{dz_n}{z_n} \cdot \delta_{ T_{z_1}\cap \cdots \cap T_{z_{j-1}} }\in \mathcal {D}^{n-1}\left( (\PP^1)^n\right),  \end{matrix} \end{equation} 
\begin{equation} \Omega_n :=\Omega_n(z_1,\ldots ,z_n):=\frac{dz_1}{z_1}\wedge \cdots \wedge \frac{dz_n}{z_n}\in \mathcal D^{n,0}\left( (\PP^1)^n\right), \end{equation} and
\begin{equation}\label{eqn T}T_n:=T_n(z_1,\ldots ,z_n):=T_{z_1}\cap\cdots\cap T_{z_n}\in C_{\text{top}}^n\left((\PP^1)^n \right). \end{equation}Roughly
speaking, \eqref{eqnTelescope} follows from $d[\log z_{j}]=\frac{dz_{j}}{z_{j}}-2\pi i\delta_{T_{z_{j}}}$
and $d\left[\frac{dz_{j}}{2\pi iz_{j}}\right]=\delta_{(z_{j})}=d\left[\delta_{T_{z_{j}}}\right]$.

The key point is \eqref{eqnRn}, which was arrived at in \cite{Ke1,Ke2}
by formally applying P. Griffiths's formula for $AJ$ \cite{Gr} to
relative cycles on the Cartesian product of a smooth projective $d$-fold
$\x$ with 
\[
(\square^{n},\partial\square^{n}):=\left((\PP^{1}\backslash\{1\})^{n},\cup_{j,\epsilon}\imath_{j}^{\epsilon}\left((\PP^{1}\backslash\{1\})^{n-1}\right)\right).
\]
 Writing $p_{j}(z_{1},\ldots,z_{n}):=(z_{1},\ldots,\widehat{z_{j}},\ldots,z_{n})\in\square^{n-1}$,
the cubical higher Chow precycles 
\[
Z^{p}(\x,n)\subset Z^{p}(\x\times\square^{n})/\overset{\text{degenerate cycles}}{\overbrace{\sum_{j}p_{j}^{*}Z^{p}(\x\times\square^{n-1})}}
\]
are the algebraic cycles meeting arbitrary intersections of the $\x\times\imath_{j}^{\epsilon}(\square^{n-1})$
properly, i.e. in the expected dimension (or less). The good precycles
$Z_{\RR}^{p}(\x,n)\subset Z^{p}(\x,n)$ are those which meet $T_{z_{1}}$,
$T_{z_{1}}\cap T_{z_{2}}$, $\ldots$, $T_{z_{1}}\cap\cdots\cap T_{z_{n}}$
and their arbitrary intersections with the $\x\times\imath_{j}^{\epsilon}(\square^{n-1})$
properly as well \cite{KL}. Given $\mathfrak{Z}\in Z_{\RR}^{p}(\x,n)$,
the convergence of
\[
\int_{\x}R_{\mathfrak{Z}}\wedge\omega:=\int_{\widetilde{\mathfrak{Z}}}R_{n}(\underline{z})\wedge\pi_{\x}^{*}\omega
\]
for arbitrary $\omega\in A^{2d-2p+n+1}(\x)$ defines a current $R_{\mathfrak{Z}}\in\mathcal{D}^{2p-n-1}(\x)$;
indeed, this holds on the level of summands of \eqref{eqnRn}. Similarly,
one defines $\Omega_{\mathfrak{Z}}\in F^{p}\mathcal{D}^{2p-n}(\x)$
and 
\[
T_{\mathfrak{Z}}:=(\pi_{\x})_{*}\left\{ (\x\times T_{n})\cap\mathfrak{Z}\right\} \in C_{\text{top}}^{2p-n}(\x;\QQ),
\]
and according to \cite{KLM}\begin{equation}\label{eqnResFormula}d\left[R_{\mathfrak{Z}}\right]=\Omega_{\mathfrak{Z}}-(2\pi i)^{n}\delta_{T_{\mathfrak{Z}}}-2\pi iR_{\partial_{B}\mathfrak{Z}}.
\end{equation}In particular, given $f_{1},\ldots,f_{n}\in\mathcal{O}_{\text{alg}}^{*}(U)$
($U\subset\x$ Zariski open) for which the Zariski closure of 
\[
\Gamma_{\underline{f}}:=\left\{ (x,f_{1}(x),\ldots,f_{n}(x))\,|\, x\in U\right\} \subset\x\times\square^{n}
\]
 is a good precycle, each term of $R(f_{1},\ldots,f_{n})$ defines
an $(n-1)$-current on $\x$.

The relative dearth of coordinate hypersurfaces in projective space
makes defining a telescoping sequence of currents a greater challenge
than in the cubical case. Writing $X_{0}:\cdots:X_{n}$ for the projective
coordinates, the closure of $\Gamma_{\left(\frac{X_{1}}{X_{0}},\cdots,\frac{X_{n}}{X_{0}}\right)}$
is not even a precycle. On its own this is not necessarily a problem,
but the non-integrability of $(\log x)\frac{dx}{x}\delta_{T_{x}}$
against $1$ on $D_{\epsilon}(0)$ means that along the hyperplane
at infinity, certain terms of $R\left(\frac{X_{1}}{X_{0}},\cdots,\frac{X_{n}}{X_{0}}\right)$
(for $n\geq3$) fail individually to yield currents on $\mathbb{P}^{n}$.
This must be corrected if any sort of computation or manipulation
is to take place. Moreover, it is not at all clear how to generalize
the construction of a bounding membrane in \cite{Ke1,Ke2}.

To get around the termwise-nonconvergence problem, there are two natural
choices on $\PP^{n}$: \begin{equation} \label{eqnS}S_n^{\Delta}:=S_n^{\Delta}(X_0:\cdots :X_n):=R_n\left(-\frac{X_1}{X_0},-\frac{X_2}{X_1},\ldots ,-\frac{X_n}{X_{n-1}}\right)
\end{equation}and \begin{equation}\label{eqnRsimp}\begin{matrix} R_n^{\Delta}:=R_n^{\Delta}(X_0:\cdots:X_n):=\\ \\ R_n\left( -\frac{(X_1+\cdots +X_n)}{X_0} , -\frac{(X_2+\cdots +X_n)}{X_1},\ldots,-\frac{X_n}{X_{n-1}}\right), \end{matrix}
\end{equation}with $R_{n}(\cdots)$ as in \eqref{eqnRn}, interpreted in the sense
of $\int_{\mathbb{P}^{n}}R_{n}(\cdots)\wedge\omega:=\lim_{\epsilon\to0}\int_{\mathbb{P}^{n}\backslash U_{\epsilon}}R_{n}(\cdots)\wedge\omega$
for $U_{\epsilon}$ a tubular neighborhood of the singular set of
$R_{n}(\cdots)$. The first version can be more convenient for reciprocity
laws, but the second is essential for defining Abel-Jacobi maps, as
we shall discover below.
\begin{lem}
\label{lemma 2.1}Each term of $S_{n}^{\Delta}$ and $R_{n}^{\Delta}$
belongs to $\mathcal{D}^{n-1}(\PP^{n})$.\end{lem}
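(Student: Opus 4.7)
I would prove this by a direct local $L^{1}$-integrability analysis, summand by summand. The statement amounts to showing that each summand of $S_{n}^{\Delta}$ (resp.\ $R_{n}^{\Delta}$) extends from its smooth locus to a well-defined $(n-1)$-current on $\PP^{n}$: i.e.\ that the regularized pairing $\lim_{\epsilon\to 0}\int_{\PP^{n}\setminus U_{\epsilon}}(\text{summand})\wedge\omega$ is absolutely convergent for every smooth test form $\omega\in A^{n+1}(\PP^{n})$, independent of the choice of tubular neighborhood $U_{\epsilon}$, and continuous in $\omega$.

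Fix a typical summand indexed by $j\in\{1,\ldots,n\}$, namely
\[
\varepsilon^{j-1}\log(z_{j})\bigwedge_{k=j+1}^{n}\tfrac{dz_{k}}{z_{k}}\cdot \delta_{T_{z_{1}}\cap\cdots\cap T_{z_{j-1}}},
\]
with the $z_{i}$ substituted from \eqref{eqnS} or \eqref{eqnRsimp}. First I would check that the chain $T_{z_{1}}\cap\cdots\cap T_{z_{j-1}}$ pulls back to a well-defined real semianalytic subset of $\PP^{n}$ of real codimension $j-1$: for $S_{n}^{\Delta}$ the constraints $X_{i}/X_{i-1}\in\RR_{+}$ are transverse, and for $R_{n}^{\Delta}$ the same holds after an affine substitution $Y_{i}:=X_{i}+\cdots+X_{n}$. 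Off the union $\Sigma_{j}$ of singular hyperplanes (the coordinate hyperplanes $\{X_{i}=0\}$, plus $\{X_{i}+\cdots+X_{n}=0\}$ in the $R_{n}^{\Delta}$ case), the summand is manifestly smooth.

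The heart of the argument is the local estimate near $\Sigma_{j}$. After expanding each $d\log(-X_{k}/X_{k-1})$ as $d\log X_{k}-d\log X_{k-1}$ (and likewise for $R_{n}^{\Delta}$), I would choose local analytic coordinates $(w_{1},\ldots,w_{n})$ in which the relevant hypersurfaces are coordinate hyperplanes, and then invoke the structural observation that distinguishes $S_{n}^{\Delta}$ and $R_{n}^{\Delta}$ from the naive substitution $z_{i}=X_{i}/X_{0}$: in $S_{n}^{\Delta}$ each $X_{i}$ appears in at most two of the $z_{k}$'s (namely $z_{i}$ and $z_{i+1}$); the factor $\log z_{j}$ contributes no $d\log$ of its own; and the chain $T_{z_{1}}\cap\cdots\cap T_{z_{j-1}}$ constrains only $X_{0},\ldots,X_{j-1}$. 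Hence along each local coordinate direction $w_{\ell}$, at most one of the following can occur: a $\log|w_{\ell}|$ singularity, a $dw_{\ell}/w_{\ell}$ factor, or a chain restriction cutting down the $w_{\ell}$-direction. This rules out the forbidden germ $(\log x)(dx/x)\delta_{T_{x}}$ whose non-integrability was recalled just after \eqref{eqnRsimp}, and reduces integrability to the standard facts that $dw/w$ and $\log|w|\cdot dw/w$ are locally $L^{1}$ on $\CC$, while $\log|w|$ is locally $L^{1}$ on $\RR$. The resulting bound is uniform in $\omega$ (depending only on the $C^{0}$-norm of its coefficients), so dominated convergence yields existence and tube-independence of the regularized limit, whence a continuous functional in $\mathcal{D}^{n-1}(\PP^{n})$.

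\textbf{The main obstacle} is executing the ``one-per-variable'' check in the $R_{n}^{\Delta}$ case: the nonlinear divisors $\{X_{i}+\cdots+X_{n}=0\}$ intersect each other and the coordinate hyperplanes in codimension-$\geq 2$ loci whose local geometry requires careful bookkeeping (or a resolution by iterated blowups) to verify that the principle persists on all boundary strata. One would like to reduce everything to the simpler $S_{n}^{\Delta}$-type analysis via the affine substitution $Y_{i}=X_{i}+\cdots+X_{n}$, but the cascade of substitutions does not linearize simultaneously on all of $\PP^{n}$, so a stratified argument is genuinely needed.
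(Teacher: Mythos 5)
For $S_{n}^{\Delta}$ your argument is essentially the paper's: the key structural observation (each $X_{i}$ appears in at most two of the substituted $z_{k}$'s, adjacently, so the forbidden germ $(\log x)\tfrac{dx}{x}\delta_{T_{x}}$ never occurs) and the reduction to standard local $L^{1}$ facts match what is done in the proof. One small internal inconsistency in your writeup: you claim ``along each local coordinate direction $w_{\ell}$, at most \emph{one} of [a $\log$ singularity, a $dw_{\ell}/w_{\ell}$ factor, a chain restriction] can occur,'' but this is too strong and is not what happens. Near $\{X_{j}=0\}$ the $j$-th summand of $S_{n}^{\Delta}$ has both $\log(-X_{j}/X_{j-1})$ and $\mathrm{dlog}(-X_{j+1}/X_{j})$, i.e.\ a genuine $(\log w)\,\tfrac{dw}{w}$ germ, and near $\{X_{j-1}=0\}$ there is both $\log(-X_{j}/X_{j-1})$ and $\delta_{T_{-X_{j-1}/X_{j-2}}}$, i.e.\ a $(\log w)\,\delta_{T}$ germ. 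The correct statement is that the \emph{forbidden} combinations $\tfrac{dw}{w}\,\delta_{T}$ and $(\log w)\tfrac{dw}{w}\delta_{T}$ never occur, while $(\log w)\tfrac{dw}{w}$ and $(\log w)\delta_{T}$ do occur and are locally integrable — which is indeed how the paper phrases it, and which you implicitly use when you cite the $L^{1}$-locality of $\log|w|\cdot dw/w$.

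For $R_{n}^{\Delta}$ you have correctly identified the obstacle — the auxiliary divisors $\{X_{i}+\cdots+X_{n}=0\}$ make a termwise local analysis considerably harder, and no single affine substitution linearizes everything globally — but you have not resolved it, leaving a genuine gap: the ``stratified argument'' you flag as needed is never supplied. The paper takes a completely different and slicker route here. It forms the graph cycle $\Gamma_{\underline{f}^{[n]}}\in Z^{n}(\PP^{n}\times\square^{n})$ of the map $\underline{f}^{[n]}$, and verifies by explicit intersection computation that (i) $\Gamma_{\underline{f}^{[n]}}$ is a cubical precycle, with its intersections over $\{X_{j}=\cdots=X_{n}=0\}$ giving \emph{degenerate} cycles, and (ii) $\Gamma_{\underline{f}^{[n]}}$ meets the real chains $T_{z_{1}}\cap\cdots\cap T_{z_{k}}$ and their boundary restrictions properly, so $\Gamma_{\underline{f}^{[n]}}\in Z_{\RR}^{n}(\PP^{n},n)$. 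The termwise current property of $R_{n}^{\Delta}=R_{\Gamma_{\underline{f}^{[n]}}}$ then follows \emph{immediately} from the already-established KLM convergence theory on the cubical side, with no local $L^{1}$ estimates required at all. This is the idea your proposal is missing. (The paper does carry out a stratified analysis with embedded resolution — closely resembling what you sketch — but in Appendix II to \S3, for the more general currents $R_{\mathfrak{Z}}^{\Delta}$ on an arbitrary smooth projective $\x$; for Lemma \ref{lemma 2.1} specifically, the reduction to the cubical theory is cleaner and avoids the stratification entirely.)
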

\begin{proof}
For $S_{n}^{\Delta}$, we remark that no $X_{i}$ appears more than
twice in any term, and that occurrences are always adjacent. This
produces singularities of the form $(\log z)\delta_{T_{-z}}$ and
$(\log z)\frac{dz}{z}$ (which are integrable against smooth forms),
and exterior products of such, while prohibiting $\frac{dz}{z}\delta_{T_{-z}}$
and $(\log z)\frac{dz}{z}\delta_{T_{-z}}$.

While $R_{n}^{\Delta}$ appears to be more complicated, it turns out
to be even better behaved. Consider the cycle\begin{equation}\label{*S2}
\Gamma_{\underline{f}^{[n]}}\in Z^{n}(\PP^{n}\times\square^{n})
\end{equation}where\small 
\[
\underline{f}^{[n]}:=\underline{f}[X_{0}:\cdots:X_{n}]:=\left(-\frac{X_{1}+\cdots+X_{n}}{X_{0}},-\frac{X_{2}+\cdots+X_{n}}{X_{1}},\ldots,-\frac{X_{n}}{X_{n-1}}\right).
\]
\normalsize Its intersections with $\PP^{n}\times\imath_{j}^{\infty}(\square^{n-1})$
($j=1,\ldots,n$) and $\PP^{n}\times\imath_{n}^{0}(\square^{n-1})$
are $\Gamma_{\underline{f}[X_{0}:\cdots:\widehat{X_{k}}:\cdots:X_{n}]}$
for some $k$. Those with $\PP^{n}\times\imath_{j}^{0}(\square^{n-1})$
for $j=1,\ldots,n-1$ are concentrated over the loci $\PP^{j-1}\cong\{X_{j}=\cdots=X_{n}=0\}\overset{I_{j}}{\hookrightarrow}\PP^{n}$,
since away from this set, $X_{j}+\cdots+X_{n}=0$ $\implies$ one
of $-\frac{X_{j+1}+\cdots+X_{n}}{X_{j}},\,\ldots,\,-\frac{X_{n}}{X_{n-1}}$
equals $1$. In fact these intersections are proper and yield degenerate
cycles, of the form $\left(I_{j}\times\imath_{j}^{0}\right)_{*}p_{j,\ldots,n}^{*}\Gamma_{\underline{f}^{[j-1]}}.$
We conclude that $\Gamma_{\underline{f}^{[n]}}$ is a precycle.

Moreover, writing $x_{i}:=\frac{X_{i}}{X_{0}}$, it meets $(\CC^{*})^{n}\times T_{z_{1}}$,
$(\CC^{*})^{n}\times(T_{z_{1}}\cap T_{z_{2}})$, $\ldots$, $(\CC^{*})^{n}\times T_{n}$
over the subsets of $(\CC^{*})^{n}$ defined by: $(x_{1}+\cdots+x_{n})>0$;
$x_{1},(x_{2}+\cdots+x_{n})>0$; $x_{1},x_{2},(x_{3}+\cdots+x_{n})>0$;
$\ldots$; $x_{1},\ldots,x_{n}>0$. The intersections with $\Gamma_{\underline{f}[X_{0}:\cdots:\widehat{X_{k}}:\cdots:X_{n}]}$
in $(\CC^{*})^{n-1}\subset\{X_{k}=0\}$ behave similarly, and so we
conclude that $\Gamma_{\underline{f}^{[n]}}\in Z_{\RR}^{n}(\PP^{n},n)$.
It follows immediately that (term for term) $R_{n}^{\Delta}=R_{\Gamma_{\underline{f}^{[n]}}}$
is a current.
\end{proof}
To complete either \eqref{eqnS} or \eqref{eqnRsimp} to a triple,
the currents \begin{equation}\Omega_n^{\Delta}:=\frac{dx_1}{x_1}\wedge\cdots\wedge \frac{dx_n}{x_n} \; ,\;\;\;T_n^{\Delta}:=\overline{\left(\RR_{>0}\right)^{\times n}}. 
\end{equation}on $\PP^{n}$ will be needed.
\begin{lem}
\label{lem 2.2}We have $T_{n}^{\Delta}=T_{n}\left(-\frac{X_{1}}{X_{0}},\ldots,-\frac{X_{n}}{X_{n-1}}\right)=T_{n}\left(\underline{f}^{[n]}\right)$
and $\Omega_{n}^{\Delta}=\Omega_{n}\left(-\frac{X_{1}}{X_{0}},-\frac{X_{2}}{X_{1}},\ldots,-\frac{X_{n}}{X_{n-1}}\right)=\Omega_{n}\left(\underline{f}^{[n]}\right)$.\end{lem}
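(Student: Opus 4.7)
The plan is to treat the two three-way identities separately, noting that in each case the middle equality reduces to an easy unipotent triangular change of basis, while the right equality (involving $\underline{f}^{[n]}$) requires a direct combinatorial/determinantal computation. Throughout, I would work in the affine chart $\{X_{0}\ne 0\}$ with $x_{j}:=X_{j}/X_{0}$, set $x_{0}:=1$, and write $S_{j}:=x_{j}+x_{j+1}+\cdots+x_{n}$.

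For the $T_{n}$-identity, using $T_{-\alpha}=\alpha^{-1}(\RR_{>0})$, the middle expression equals $\bigcap_{j=1}^{n}\{x_{j}/x_{j-1}>0\}$, which inductively (starting from $x_{1}>0$) unwinds to $(\RR_{>0})^{\times n}$ with closure $T_{n}^{\Delta}$. The right expression $\bigcap_{j=1}^{n}\{S_{j}/x_{j-1}>0\}$ plainly contains the positive orthant; the reverse inclusion I would establish by forward induction on $j$: from $S_{j}>0$ (already shown, starting from $S_{1}>0$) together with $S_{j+1}/x_{j}>0$, the relation $S_{j}=x_{j}+S_{j+1}$ rules out $x_{j}<0$, since that would force $S_{j+1}<0$ and hence $S_{j}<0$. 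The orientations agree because both sides are top-dimensional real chains with $\partial$ compatible with the divisors of the respective functions, which can be checked at any single interior point.

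For the $\Omega_{n}$-identity, setting $u_{k}:=d\log X_{k}$, the middle equality $\bigwedge_{j}d\log(X_{j}/X_{0})=\bigwedge_{j}(d\log X_{j}-d\log X_{j-1})$ follows from the unipotent triangular relation $u_{j}-u_{0}=\sum_{k\le j}(u_{k}-u_{k-1})$. For the right equality I would expand each factor $d\log(-S_{j}/X_{j-1})$ in the basis $\{dx_{k}\}_{k=1}^{n}$: the resulting coefficient matrix $T$ has $T_{1,k}=1/S_{1}$ for all $k$; and for $j\ge 2$, $T_{j,j-1}=-1/x_{j-1}$, $T_{j,k}=1/S_{j}$ for $k\ge j$, and $T_{j,k}=0$ otherwise. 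The claim then reduces to $\det(T)=1/(x_{1}\cdots x_{n})$.

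To evaluate this determinant, I would apply the determinant-preserving column operations $c_{k}\mapsto c_{k}-c_{k+1}$ for $k=n-1,\ldots,1$. Row $1$ then has a single nonzero entry $1/S_{1}$ in column $n$; and for $j\ge 2$, the new row $j$ has nonzero entries $1/x_{j-1}$ at column $j-2$, $-1/x_{j-1}-1/S_{j}=-S_{j-1}/(x_{j-1}S_{j})$ at column $j-1$ (using $S_{j-1}=x_{j-1}+S_{j}$), and $1/S_{j}$ at column $n$. Expanding along row $1$ (with cofactor sign $(-1)^{n+1}$) reduces the problem to a lower-bidiagonal $(n-1)\times(n-1)$ determinant, whose value is the product of its diagonal entries $\prod_{j=2}^{n}[-S_{j-1}/(x_{j-1}S_{j})]$; this telescopes, using $S_{n}=x_{n}$, to $(-1)^{n-1}S_{1}/(x_{1}\cdots x_{n})$, and combining with $(-1)^{n+1}(1/S_{1})$ gives $\det(T)=1/(x_{1}\cdots x_{n})$ exactly. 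The main obstacle is organizing the column reduction so that the bidiagonal structure and the telescoping of the $S_{j}$'s emerge cleanly; once this is set up, the remaining sign and index bookkeeping is mechanical.
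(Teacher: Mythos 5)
Your proposal is correct. For the $T_{n}$ identity you use essentially the same idea as the paper: the reverse inclusion in the orthant is checked by a forward induction on $j$ (the paper only spells this out for $n=3$, saying the general case ``follows from the proof of Lemma~\ref{lemma 2.1}'', but the logic is the same). For the $\Omega_{n}$ identity, however, your route is genuinely different. The paper establishes the equality of three classes in Milnor $K$-theory of $\CC(x_{1},\ldots,x_{n})$, via repeated application of the Steinberg relation $\{a,1-a\}=1$ and the relation $\{a,-a\}=1$, and then pushes forward along the $\operatorname{dlog}$ map from symbols to forms. You instead work directly in the one-form basis $\{dx_{k}\}$, express each factor $\operatorname{dlog}(-S_{j}/x_{j-1})$ as a row of a matrix $T$, and evaluate $\det T$ by column elimination and a telescoping product, obtaining $1/(x_{1}\cdots x_{n})$ (I checked the coefficient pattern, the column reduction, the bidiagonal minor, and the sign bookkeeping — all correct, including the cancellation of $S_{1}$ and $(-1)^{2n}=1$). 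The $K$-theory proof is shorter once you know which Steinberg relations to chain together, and it makes the equality at the level of symbols manifest (which the paper later uses in the proof of Proposition~\ref{prop telescope}); your linear-algebra argument is more elementary and self-contained, at the cost of a slightly longer bookkeeping exercise and of losing the symbol-level information. Both proofs are valid.
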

\begin{proof}
For the chains the first equality is clear, and $T_{n}^{\Delta}=T_{n}(\underline{f}^{[n]})$
follows from the proof of \ref{lemma 2.1}. To illustrate the latter
point for $n=3$: in $T_{3}(\underline{f}^{[3]})=T_{-(x_{1}+x_{2}+x_{3})}\cap T_{-\frac{(x_{2}+x_{3})}{x_{1}}}\cap T_{-\frac{x_{3}}{x_{2}}}$,
we have $x_{1}+x_{2}+x_{3}=a$, $x_{2}+x_{3}=bx_{1}$, and $x_{3}=cx_{2}$
where $a,b,c>0$. Hence $x_{1}=\frac{a}{1+b}>0$, $x_{2}=\frac{b}{1+c}x_{1}>0$,
and $x_{3}=cx_{2}>0$. The reverse inclusion is clear.

The two equalities of $(n,0)$-currents follows, via the $\text{dlog}$
map from symbols to forms, from the following computation in Milnor
$K$-theory of $\CC(x_{1},\ldots,x_{n})$:\small
\[
\left\{ -\frac{(X_{1}+\cdots+X_{n})}{X_{0}},-\frac{(X_{2}+\cdots+X_{n})}{X_{1}},\ldots,-\frac{(X_{n-1}+X_{n})}{X_{n-2}},-\frac{X_{n}}{X_{n-1}}\right\} =
\]
\[
\left\{ -\frac{X_{1}}{X_{0}}\left(1+\frac{X_{2}}{X_{1}}\left(1+\frac{X_{3}}{X_{2}}\left(1+\cdots\right)\right)\right),-\frac{X_{2}}{X_{1}}\left(1+\frac{X_{3}}{X_{2}}\left(1+\cdots\right)\right),\ldots\right.\mspace{50mu}
\]
\[
\mspace{280mu}\left.\ldots,-\frac{X_{n-1}}{X_{n-2}}\left(1+\frac{X_{n}}{X_{n-1}}\right),-\frac{X_{n}}{X_{n-1}}\right\} =
\]
\[
\left\{ -\frac{X_{1}}{X_{0}},-\frac{X_{2}}{X_{1}}\left(1+\frac{X_{3}}{X_{2}}\left(1+\cdots\right)\right),-\frac{X_{3}}{X_{2}}\left(1+\cdots\right),\ldots,-\frac{X_{n}}{X_{n-1}}\right\} =\cdots=
\]
\[
\left\{ -\frac{X_{1}}{X_{0}},-\frac{X_{2}}{X_{1}},-\frac{X_{3}}{X_{2}},\ldots,-\frac{X_{n}}{X_{n-1}}\right\} =\cdots=\left\{ -\frac{X_{1}}{X_{0}},-\frac{X_{2}}{X_{0}},\ldots,-\frac{X_{n}}{X_{0}}\right\} ,
\]
\normalsize where we have used in particular the relations $\{\ldots,a,\ldots,-a,\ldots\}=1=\{\ldots,a,\ldots,1-a,\ldots\}$.
\end{proof}
This brings us to the main point. Writing $\rho_{j}\left[\xi_{0}:\cdots:\xi_{n-1}\right]:=\left[\xi_{0}:\cdots:\xi_{j-1}:0:\xi_{j}:\cdots:\xi_{n-1}\right]$
for the inclusion of $\{X_{j}=0\}$ in $\PP^{n}$, we have
\begin{prop}
\label{prop telescope}Let $\mathfrak{R}_{n}$ stand for $R_{n}^{\Delta}$
or $S_{n}^{\Delta}$. Then $d\left[\mathfrak{R}_{n}\right]=\Omega_{n}^{\Delta}-(2\pi i)^{n}\delta_{T_{n}^{\Delta}}+2\pi i\sum_{j=0}^{n}(-1)^{j}(\rho_{j})_{*}\mathfrak{R}_{n-1}.$\end{prop}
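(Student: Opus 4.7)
The plan is to pull back the cubical telescoping formula \eqref{eqnTelescope} from $(\PP^1)^n$ along the rational substitutions $\underline f^{[n]}$ (for $R_n^\Delta$) and $\underline g^{[n]}:=(-X_1/X_0,\ldots,-X_n/X_{n-1})$ (for $S_n^\Delta$). By Lemma \ref{lem 2.2}, the first two terms on the right-hand side of \eqref{eqnTelescope}, namely $\Omega_n$ and $-(2\pi i)^n\delta_{T_n}$, pull back termwise to $\Omega_n^\Delta$ and $-(2\pi i)^n\delta_{T_n^\Delta}$; here the multiplicity is $1$ because both substitutions are birational onto the relevant open stratum. So what remains is to match the boundary sum.

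For $R_n^\Delta$, I would apply the residue formula \eqref{eqnResFormula} directly to the good precycle $\Gamma_{\underline f^{[n]}}\in Z_\RR^n(\PP^n,n)$ established in Lemma \ref{lemma 2.1}, which already did most of the combinatorial work: its only non-degenerate faces are $\partial_{k+1}^\infty \Gamma_{\underline f^{[n]}}$ for $k=0,\ldots,n-1$ (supported on $\rho_k(\PP^{n-1})=\{X_k=0\}$) and $\partial_n^0\Gamma_{\underline f^{[n]}}$ (supported on $\rho_n(\PP^{n-1})=\{X_n=0\}$), and in each case a short check (analogous to the one in Lemma \ref{lemma 2.1}: set $X_k=0$, forget the coordinate that becomes $\infty$ or $0$, and relabel the remaining $X_l$'s) identifies the face with $\Gamma_{\underline f^{[n-1]}}$ on $\PP^{n-1}$. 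Substituting these identifications into $-2\pi i R_{\partial_B\Gamma}=-2\pi i\sum_j(-1)^j(R_{\partial_j^0\Gamma}-R_{\partial_j^\infty\Gamma})$ and reindexing $j\mapsto k=j-1$ for the $\infty$-faces assembles the $n+1$ contributions into $+2\pi i\sum_{k=0}^n(-1)^k(\rho_k)_* R_{n-1}^\Delta$, exactly as stated.

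For $S_n^\Delta$ the substitution $\underline g^{[n]}$ is less well-behaved, because on $\{X_k=0\}$ ($0<k<n$) one has simultaneously $z_k=0$ and $z_{k+1}=\infty$, so $\Gamma_{\underline g^{[n]}}$ is not literally a good precycle and \eqref{eqnResFormula} is not directly applicable. I would instead differentiate $S_n^\Delta$ termwise on $\PP^n$, using the pointwise identities $d[\log z]=dz/z-2\pi i\delta_{T_z}$, $d[dz/z]=2\pi i\delta_{(z=0)}$, $d[\delta_{T_z}]=\delta_{(z=0)}$, and the factorization $d(-X_k/X_{k-1})/(-X_k/X_{k-1})=dX_k/X_k-dX_{k-1}/X_{k-1}$. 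Each interior hyperplane $\{X_k=0\}$ then receives a pair of contributions from adjacent logarithmic terms (one from $z_k=0$ and one from $z_{k+1}=\infty$); the singular higher-codimension pieces cancel in pairs, leaving exactly $2\pi i(-1)^k(\rho_k)_* S_{n-1}^\Delta$. The end-hyperplanes $\{X_0=0\}$ and $\{X_n=0\}$ each contribute only once, with signs $+1$ and $(-1)^n$.

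The main obstacle in both cases is sign and combinatorial bookkeeping — checking, in particular, that the pushforward identifications fit together with the signs in \eqref{eqnTelescope}/\eqref{eqnResFormula} to produce the alternating sum $\sum_{k=0}^n(-1)^k(\rho_k)_*\mathfrak{R}_{n-1}$. For $R_n^\Delta$ the substantive geometric input is already supplied by Lemmas \ref{lemma 2.1} and \ref{lem 2.2}, so this amounts to a clean formal reduction; for $S_n^\Delta$ the extra technical issue is verifying the cancellation of the non-integrable singular contributions along the codimension-two loci, which must be done term by term.
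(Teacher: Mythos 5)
Your proposal follows essentially the same route as the paper: for $R_n^\Delta$ you apply the KLM residue formula \eqref{eqnResFormula} to the good precycle $\Gamma_{\underline f^{[n]}}$, using the face computation already carried out in Lemma \ref{lemma 2.1} (this is exactly the paper's argument), and for $S_n^\Delta$ you fall back on a direct termwise differentiation because $\Gamma_{(-X_1/X_0,\ldots,-X_n/X_{n-1})}$ fails to be a precycle, which is also what the paper does. The only cosmetic difference is that the paper organizes the $S_n^\Delta$ residue computation by first decomposing $S_n^\Delta$ near $\{X_\ell=0\}$ into three pieces (an $S_{\ell-1}^\Delta\wedge\Omega$-term, a $\delta_T\cdot S_2^\Delta\wedge\Omega$-term, and a $\delta_T\cdot S_{n-\ell-1}^\Delta$-term) guided by the tame symbol in Milnor $K$-theory, whereas you sketch the same cancellation in less structured form.
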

\begin{proof}
The computation of $\Gamma_{\underline{f}^{[n]}}\cdot\left(\PP^{n}\times\imath_{k}^{\epsilon}\left(\square^{n-1}\right)\right)$
in the proof of Lemma \ref{lemma 2.1} implies 
\[
\partial_{B}\Gamma_{\underline{f}^{[n]}}=\sum(-1)^{j}(\rho_{j})_{*}\Gamma_{\underline{f}^{[n-1]}},
\]
which together with \eqref{eqnResFormula} gives the result for $R_{n}^{\Delta}$.

For $S_{n}^{\Delta}$, the correct residues are suggested by the corresponding
tame symbols in Milnor $K$-theory:\tiny
\[
\left\{ -\frac{X_{1}}{X_{0}},\ldots,-\frac{X_{\ell}}{X_{\ell-1}},-\frac{X_{\ell+1}}{X_{\ell}},\ldots-\frac{X_{n}}{X_{n-1}}\right\} =\left\{ -\frac{X_{1}}{X_{0}},\ldots,-\frac{X_{\ell}}{X_{\ell-1}},-\frac{X_{\ell+1}}{X_{\ell-1}},\ldots,-\frac{X_{n}}{X_{n-1}}\right\} 
\]
\small
\[
\overset{Tame_{(X_{\ell})}}{\longmapsto}\left\{ -\frac{X_{1}}{X_{0}},\ldots,-\frac{X_{\ell+1}}{X_{\ell-1}},\ldots,-\frac{X_{n}}{X_{n-1}}\right\} .
\]
\normalsize Since $\Gamma_{\left(-\frac{X_{1}}{X_{0}},\ldots,-\frac{X_{n}}{X_{n-1}}\right)}$
isn't a precycle, we must compute explicitly: $S_{n}^{\Delta}\left(X_{0}:\cdots:X_{n}\right)=$\small
\[
S_{\ell-1}^{\Delta}\left(X_{0}:\ldots:X_{\ell-1}\right)\wedge\Omega_{n-\ell+1}^{\Delta}\left(X_{\ell-1}:X_{\ell}:\cdots:X_{n}\right)
\]
\[
+(-2\pi i)^{\ell-1}\delta_{T_{\ell-1}^{\Delta}\left(X_{0}:\cdots:X_{\ell-1}\right)}\cdot S_{2}^{\Delta}\left(X_{\ell-1}:X_{\ell}:X_{\ell+1}\right)\wedge\Omega_{n-\ell-1}^{\Delta}\left(X_{\ell+1}:\cdots:X_{n}\right)
\]
\[
+(-2\pi i)^{\ell+1}\delta_{T_{\ell+1}^{\Delta}\left(X_{0}:\cdots:X_{\ell}:X_{\ell+1}\right)}\cdot S_{n-\ell-1}^{\Delta}\left(X_{\ell+1}:\cdots:X_{n}\right),
\]
\normalsize $(-1)^{\ell}\text{Res}_{(X_{\ell})}$ of which is\small
\[
S_{\ell-1}^{\Delta}\left(X_{0}:\cdots:X_{\ell-1}\right)\wedge\Omega_{n-\ell}^{\Delta}\left(X_{\ell-1}:X_{\ell+1}:\cdots:X_{n}\right)
\]
\[
+(-2\pi i)^{\ell-1}\delta_{T_{\ell-1}^{\Delta}\left(X_{0}:\cdots:X_{\ell-1}\right)}\cdot\log\left(-\frac{X_{\ell+1}}{X_{\ell-1}}\right)\Omega_{n-\ell-1}^{\Delta}\left(X_{\ell+1}:\cdots:X_{n}\right)
\]
\[
+(-2\pi i)^{\ell}\delta_{T_{\ell}^{\Delta}\left(X_{0}:\cdots:X_{\ell-1}:X_{\ell+1}\right)}\cdot S_{n-\ell-1}^{\Delta}\left(X_{\ell+1}:\cdots:X_{n}\right).
\]
\normalsize $=S_{n-1}^{\Delta}\left(X_{0}:\cdots:\widehat{X_{\ell}}:\cdots:X_{n}\right)$.
Here the residue of $\Omega_{n-\ell+1}^{\Delta}$ follows from the
$K$-theory computation, and the boundary of $T_{\ell+1}^{\Delta}(X_{0}:\cdots:X_{\ell+1})$
from the fact that it is just the closure of $\{x_{1},\ldots,x_{\ell+1}\in\RR_{>0}\}$.
Finally, using the fact that $\log\left(\frac{f}{g}\right)=\log(f)-\log(g)$
where $g\in\RR_{>0}$, we have $S_{2}^{\Delta}\left(X_{0}:X_{1}:X_{2}\right)=$\small
\[
\log\left(-x_{1}\right)\frac{dx_{2}}{x_{2}}+\left\{ -\log\left(-x_{1}\right)\frac{dx_{1}}{x_{1}}+2\pi i\log\left(x_{1}\right)\delta_{T_{-x_{1}}}\right\} -2\pi i\log\left(-x_{2}\right)\delta_{T_{-x_{1}}}.
\]
\normalsize One checks that $d$ of the bracketed current is zero,
and so the only contribution  to $\text{Res}_{(X_{1})}$ (namely,
$-\log\left(-x_{2}\right)$) comes from the last term.
\end{proof}
For a dose of concreteness, here is a simple computation involving
$S_{5}^{\Delta}$.
\begin{example}
Let $\mathfrak{Z}\subset\PP^{5}$ be the $\PP^{2}$ obtained by projectivizing
the row space of
\[
\left(\begin{array}{cccccc}
-1 & 0 & 1 & 0 & 0 & 1\\
0 & 1 & -1 & 1 & 0 & 0\\
a & -1 & 0 & 0 & 1 & 0
\end{array}\right),
\]
with coordinates $[Y_{0}:Y_{1}:Y_{2}]$. Writing $z:=\frac{Y_{1}}{Y_{0}}$,
$w:=\frac{Y_{2}}{Y_{0}}$, the pullback of $R_{5}\left(-\frac{X_{1}}{X_{0}},-\frac{X_{2}}{X_{1}},-\frac{X_{3}}{X_{2}},-\frac{X_{4}}{X_{3}},-\frac{X_{5}}{X_{4}}\right)$
to $\mathfrak{Z}$ takes the form 
\[
R_{5}\left(\frac{z-w}{1-aw},\frac{z-1}{z-w},\frac{z}{z-1},-\frac{w}{z},-\frac{1}{w}\right).
\]
$T_{\frac{z-w}{1-aw}}\cap T_{\frac{z-1}{z-w}}$ is a triangular membrane
bounding on $z=w$, $z=1$ and $w=\frac{1}{a}$, and for $a\notin[1,\infty)$
we have $T_{\frac{z-w}{1-aw}}\cap T_{\frac{z-1}{z-w}}\cap T_{\frac{z}{z-1}}=\emptyset$.
Hence the last two terms are zero on $\mathfrak{Z}$, as are the first
two by Hodge type, and 
\[
\frac{1}{(2\pi i)^{2}}\int_{\mathfrak{Z}}S_{5}^{\Delta}=\int_{w=1}^{\frac{1}{a}}\int_{z=1}^{w}\log\left(\frac{z}{z-1}\right)\text{dlog}\left(\frac{w}{z}\right)\wedge\text{dlog}\left(\frac{1}{w}\right).
\]
Substituting $u=\frac{1}{z}$, $v=\frac{1}{w}$, the above 
\[
=-\int_{v=1}^{a}\int_{u=1}^{v}\log(1-u)\text{dlog}(u)\wedge\text{dlog}(v)
\]
\[
=\int_{v=1}^{a}\left(Li_{2}(v)-Li_{2}(1)\right)\text{dlog}(v)
\]
\[
=Li_{3}(a)-Li_{3}(1)-\log(a)Li_{2}(1)
\]
\[
=Li_{3}(a)-\zeta(3)-\frac{\pi^{2}}{6}\log(a).
\]

\end{example}

\section{Abel-Jacobi maps for simplicial higher Chow groups}
\label{sec:abel-jacobi-maps}

Let $\x$ be a smooth projective variety. The complex 
\[
C_{\mathcal{D}}^{\bullet}(\x;\QQ(p)):=C_{\text{top}}^{\bullet}\left(\x;(2\pi i)^{p}\QQ\right)\oplus F^{p}\mathcal{D}^{\bullet}(\x)\oplus\mathcal{D}^{\bullet-1}(\x)
\]
of abelian groups with differential
\[
D\left(T,\Omega,R\right):=\left(-\partial T,-d[\Omega],d[R]-\Omega+\delta_{T}\right)
\]
computes the Deligne cohomology 
\[
H_{\mathcal{D}}^{*}\left(\x;\QQ(p)\right):=H^{*}\left\{ C_{\mathcal{D}}^{\bullet}\left(\x;\QQ(p)\right)\right\} .
\]
These latter spaces are the targets for the $AJ$ (rational regulator)
maps, whose explicit construction on the \emph{simplicial} higher
Chow complex is the subject of this section.

The idea is to replace $(\square^{n},\partial\square^{n})$ in the
KLM-construction by 
\[
(\Delta^{n},\partial\Delta^{n}):=\left(\PP^{n}\backslash H_{n},\bigcup_{j=0}^{n}\rho_{j}\left(\PP^{n-1}\backslash H_{n-1}\right)\right)
\]
where $H_{n}$ is the special hyperplane cut out by $X_{0}+\cdots+X_{n}=0$.
We then define precycles (resp. good precycles)
\[
Z_{\Delta,\RR}^{p}(\x,n)\subset Z_{\Delta}^{p}(\x,n)\subset Z^{p}(\x\times\Delta^{n})
\]
 to be those cycles meeting arbitrary intersections of the $\x\times\rho_{j}(\Delta^{n-1})$
(resp. of these and the $T_{\frac{X_{1}+\cdots+X_{n}}{-X_{0}}}$,
$T_{\frac{X_{1}+\cdots+X_{n}}{-X_{0}}}\cap T_{\frac{X_{2}+\cdots+X_{n}}{-X_{1}}}$,
etc.) properly. The Bloch boundary map\begin{equation}\label{boundary}\partial_{\mathcal{B}}\mathfrak{Z}:=\sum_{j=0}^{n}(-1)^{j}\rho_{j}^{*}\mathfrak{Z}
\end{equation}makes these into quasi-isomorphic complexes:
\begin{prop}
\label{ML1}$H_{n}\left(Z_{\Delta,\RR}^{p}(\x,\bullet)\right)\cong H_{n}\left(Z_{\Delta}^{p}(\x,\bullet)\right)\cong CH^{p}(\x,n)$.%
\footnote{The proof is deferred to the first Appendix to this section so as
not to interrupt the main flow of ideas.%
}
\end{prop}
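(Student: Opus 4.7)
The first isomorphism $H_n(Z_\Delta^p(\x, \bullet)) \cong CH^p(\x, n)$ is essentially Bloch's original definition of higher Chow groups via the simplicial complex, so the plan is to cite it and focus on the substantive content: the inclusion $\iota: Z_{\Delta, \RR}^p(\x, \bullet) \hookrightarrow Z_\Delta^p(\x, \bullet)$ is a quasi-isomorphism. I would prove this by a Bloch-style translation argument using the natural torus action on $\PP^n$.

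The key geometric observation is that the torus $T_\CC := (\CC^*)^{n+1}/\text{diag}(\CC^*)$ acts on $\PP^n$ by rescaling homogeneous coordinates, preserving each hyperplane $\{X_j = 0\}$ and hence every face $\rho_j(\Delta^{n-1})$ together with all iterated face intersections. The action does not preserve $H_n$, but this is immaterial since $Z_\Delta^p(\x, n)$ is a subquotient of $Z^p(\x \times \PP^n)/Z^p(\x \times H_n)$, so any portion of a translated cycle landing in $H_n$ drops out of the quotient. The positive real subtorus $T_\RR := (\RR_{>0})^{n+1}/\text{diag}(\RR_{>0})$ additionally preserves each real chain $T_k^\Delta$, as these are cut out by positivity conditions in affine coordinates. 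Given a closed precycle $\mathfrak{Z} \in Z_\Delta^p(\x, n)$, a Bertini / dimension-counting argument applied to the incidence variety in $T_\CC \times \x \times \Delta^n$ produces a Zariski-dense open $U \subseteq T_\CC$ such that $t \cdot \mathfrak{Z} \in Z_{\Delta, \RR}^p(\x, n)$ for every $t \in U$; since $U \cap T_\RR$ is dense in $T_\RR$, a generic real translate suffices.

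To produce the required homotopy in $Z_{\Delta, \RR}^p(\x, n+1)$, I would choose an algebraic curve $\phi: \PP^1 \to T_\CC$ through the identity and a generic point of $U$, form the total family cycle on $\x \times \PP^1 \times \Delta^n$, and convert it to a cycle $\mathfrak{W}$ on $\x \times \Delta^{n+1}$ via the standard shuffle-style identification of $\Delta^1 \times \Delta^n$ as a signed sum of copies of $\Delta^{n+1}$. A routine Bloch boundary computation then yields $\partial_\mathcal{B} \mathfrak{W} \equiv t \cdot \mathfrak{Z} - \mathfrak{Z}$ modulo degenerate cycles, proving surjectivity of $\iota_*$; injectivity follows by the symmetric application to a cycle witnessing a nullhomology. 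The main obstacle is showing that $\mathfrak{W}$ itself, rather than merely its endpoints, lies in the \emph{good} subcomplex: the curve $\phi$ must be chosen generically enough that the two-parameter family meets every face of $\Delta^{n+1}$ and every real chain $T_k^\Delta \subseteq \Delta^{n+1}$ properly. This requires a two-parameter Bertini argument more delicate than the pointwise version, which I expect motivates the authors' decision to defer the full proof to the appendix.
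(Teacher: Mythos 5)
Your outline runs into a fatal obstacle at the central claim, namely that the positive real subtorus $T_{\RR}=(\RR_{>0})^{n+1}/\mathrm{diag}$ preserves the real chains entering the definition of $Z^p_{\Delta,\RR}$. It preserves the facets $\{X_j=0\}$ (these are monomial), and it preserves the full chain $T_n^\Delta=\overline{(\RR_{>0})^n}$, but the \emph{partial} chains $T_{\frac{X_1+\cdots+X_n}{-X_0}}$, $T_{\frac{X_1+\cdots+X_n}{-X_0}}\cap T_{\frac{X_2+\cdots+X_n}{-X_1}}$, etc.\ are cut out by conditions on \emph{sums} $X_j+\cdots+X_n$, and these are not monomials: a diagonal scaling $X_i\mapsto t_iX_i$ sends $X_1+\cdots+X_n$ to $t_1X_1+\cdots+t_nX_n$, which defines a different hyperplane (hence a different chain) unless all the $t_i$ are equal, i.e., the scaling is trivial in $\PP^n$. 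So translating by a generic element of $T_\RR$ does not move a cycle into good position with respect to these intermediate chains, and the Bertini/dimension-counting step that is supposed to produce the dense open $U\subset T_\CC$ has no reason to succeed: the incidence locus over a bad $T_k^\Delta$ can project dominantly onto $T_\CC$ because $T_\CC$ simply does not move that chain.

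This is exactly why the paper does not use a torus argument. Its proof has two ingredients, both different from yours. First, a purely simplicial step (Lemma \ref{lemma 5.1} and the filtration $\F^\ell\z^p$) showing that the normalized subcomplex $N_\Delta^p(\x,\bullet)$ is a quasi-isomorphic retract of $Z_\Delta^p(\x,\bullet)$ via explicit degeneracy-map homotopies $T_\ell=(-1)^\ell s_\ell$, and that these $s_\ell$ preserve the ``subscript $\RR$'' intersection conditions; this reduces the whole problem to the inclusion $N^p_{\Delta,\RR}\hookrightarrow N^p_\Delta$. Second, the genuine moving step uses Levine's iterated double $\mathcal T^n_\Delta$ together with the transitive $GL_n$-action on an associated homogeneous space, Kleiman-type generic transversality for $g\in GL_n(L)$ over a field extension $L\supset K$, and a norm argument to descend back to $K$. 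The $GL_n$-action is the crucial point: unlike the torus, it is large enough to move all of the (not-coordinate-invariant) real chains simultaneously. If you want to salvage a translation argument you would have to replace $T_\CC$ by a group acting transitively enough on those chains, which effectively forces you back to the $GL_n$/Kleiman framework the paper adopts. The shuffle decomposition of $\Delta^1\times\Delta^n$ you invoke for the homotopy is fine as a technique, but it cannot substitute for the missing transversality.
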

We shall define a morphism \begin{equation}Z^p_{\Delta,\RR}(\x,-\bullet) \overset{\widetilde{AJ}_{\Delta,\x}^{p,-\bullet}}{\longrightarrow} C_{\mathcal{D}}^{2p+\bullet}(\x;\QQ(p)),
\end{equation} which then automatically induces (simplicial) $AJ$ maps \begin{equation}\label{eqn 3.3}AJ^{p,n}_{\Delta,\x}:CH^p(\x,n)\longrightarrow H^{2p-n}_{\mathcal{D}}(\x,\QQ(p)).
\end{equation}Namely, writing $\pi_{\Delta}$, $\pi_{\x}$ for projections from
the desingularization $\widetilde{\mathfrak{Z}}$ to $\Delta^{n}$,
$\x$, we set
\[
R_{\mathfrak{Z}}^{\Delta}:=(\pi_{\x})_{*}(\pi_{\Delta})^{*}R_{n}^{\Delta}\;,\;\;\;\;\;\Omega_{\mathfrak{Z}}^{\Delta}:=(\pi_{\x})_{*}(\pi_{\Delta})^{*}\Omega_{n}^{\Delta}\;,
\]
\[
T_{\mathfrak{Z}}^{\Delta}:=(\pi_{\x})_{*}\left\{ (\x\times T_{n}^{\Delta})\cap\mathfrak{Z}\right\} \;,
\]
and \begin{equation}\label{AJdef}\widetilde{AJ}^{p,n}_{\Delta,\x}(\mathfrak{Z}):=(2\pi i)^{p-n}\left((2\pi i)^n T^{\Delta}_{\mathfrak{Z}},\Omega^{\Delta}_{\mathfrak{Z}},R^{\Delta}_{\mathfrak{Z}}\right).
\end{equation} 
\begin{thm}
\label{AJ Thm}$\widetilde{AJ}_{\Delta,\x}^{p,-\bullet}$ is a well-defined
morphism of complexes. The induced maps $AJ_{\Delta,\x}^{p,n}$ recover
Bloch's cycle-class maps (in the sense of Definition \ref{Def BCCM}
below).
\end{thm}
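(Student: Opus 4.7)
The plan is to verify in order: (i) that $\widetilde{AJ}^{p,n}_{\Delta,\x}$ is a well-defined map of abelian groups on $Z^p_{\Delta,\RR}(\x,n)$; (ii) that it commutes with differentials; and (iii) that the induced maps on homology agree with Bloch's cycle-class maps in the sense of Definition \ref{Def BCCM}.

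For well-definedness, fix $\mathfrak{Z}\in Z_{\Delta,\RR}^p(\x,n)$ and pick a desingularization $\pi:\widetilde{\mathfrak{Z}}\to\mathfrak{Z}$. The pullback $\pi_{\Delta}^{*}R_{n}^{\Delta}$ defines a current on $\widetilde{\mathfrak{Z}}$ term-by-term: this reduces, by Lemma \ref{lemma 2.1} and the good-precycle hypothesis, to the convergence of integrals of products of $\log$ and $\mathrm{dlog}$ factors against smooth forms pulled back from $\x$, exactly as in \cite{KLM}. The pushforward $(\pi_{\x})_{*}$ then produces a current on $\x$; the analogous arguments for $\Omega^{\Delta}_{\mathfrak{Z}}$ and $T^{\Delta}_{\mathfrak{Z}}$ are easier (the first is a smooth form on $\widetilde{\mathfrak{Z}}$, the second a chain of real dimension $2\dim\x-2p+n$). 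The triple must also descend to the quotient by $Z^{p}(\x\times H_{n})$. This follows from the vanishing of $R_{n}^{\Delta}$, $\Omega_{n}^{\Delta}$, and $T_{n}^{\Delta}$ on the hyperplane $H_{n}=\{X_0+\cdots+X_n=0\}$: on $H_n$ the factor $-(X_1+\cdots+X_n)/X_0$ equals $1$, which kills $\log$, $\mathrm{dlog}$, and the chain $T_{\cdot}$ factor in the leftmost slot of every term, while the analogous arguments on the other faces follow from the Milnor $K$-theory computation in the proof of Lemma \ref{lem 2.2}.

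For the morphism-of-complexes property, we must check the three components of $D\widetilde{AJ}^{p,n}_{\Delta,\x}(\mathfrak{Z})=\widetilde{AJ}^{p,n-1}_{\Delta,\x}(\partial_{\mathcal{B}}\mathfrak{Z})$. The $F^{p}$-component reduces to the identity $d\Omega^{\Delta}_{n}=0$ together with $\rho_{j}^{*}\Omega_{n}^{\Delta}=\Omega_{n-1}^{\Delta}$ (since the $K$-theoretic computation of Lemma \ref{lem 2.2} shows $\Omega_n^\Delta$ is totally symmetric in the $X_i$), giving the correct alternating sum after pushforward. The topological-chain component uses that $T_{n}^{\Delta}=\overline{(\RR_{>0})^{\times n}}$ has boundary $\sum_{j=0}^{n}(-1)^{j}\rho_{j}(T_{n-1}^{\Delta})$ (the faces $X_{j}=0$), so pushing the intersection $(\x\times T_{n}^{\Delta})\cap\mathfrak{Z}$ forward gives the desired relation. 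The heart of the argument is the $R$-component, which follows by applying $(\pi_{\x})_{*}\pi_{\Delta}^{*}$ to the telescoping identity of Proposition \ref{prop telescope}: the pushforward of $d[R_n^\Delta]$ equals $d[R^\Delta_{\mathfrak{Z}}]$ up to terms supported on the singular locus of $\widetilde{\mathfrak{Z}}\to \mathfrak{Z}$, which are negligible by the good-precycle hypothesis, and the three terms on the right produce exactly $\Omega^{\Delta}_{\mathfrak{Z}}-(2\pi i)^{n}\delta_{T^{\Delta}_{\mathfrak{Z}}}+2\pi i R^{\Delta}_{\partial_{\mathcal{B}}\mathfrak{Z}}$. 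Combined with the normalization factor $(2\pi i)^{p-n}$ in \eqref{AJdef}, this gives $D\widetilde{AJ}^{p,n}_{\Delta,\x}=\widetilde{AJ}^{p,n-1}_{\Delta,\x}\partial_{\mathcal{B}}$.

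For the comparison with Bloch's cycle-class maps, the cleanest approach is to use the precycle $\Gamma_{\underline{f}^{[n]}}\in Z^{n}_{\RR}(\PP^{n},n)$ constructed in the proof of Lemma \ref{lemma 2.1}: for any simplicial precycle $\mathfrak{Z}\in Z^p_{\Delta,\RR}(\x,n)$, the fiber product $\mathfrak{Z}\times_{\PP^{n}}\Gamma_{\underline{f}^{[n]}}$ is a cubical precycle in $Z^p_{\RR}(\x,n)$ whose KLM currents $(T,\Omega,R)$ are, by Lemma \ref{lem 2.2}, exactly $(T^{\Delta}_{\mathfrak{Z}},\Omega^{\Delta}_{\mathfrak{Z}},R^{\Delta}_{\mathfrak{Z}})$. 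Hence at the level of precycles our construction factors through the KLM formula, which is known to recover Bloch's map; invoking the standard quasi-isomorphism between simplicial and cubical higher Chow complexes and the fact that both constructions are compatible with this comparison on the Deligne side then yields the desired identification on $CH^{p}(\x,n)$. The \emph{main obstacle} is this last step: one must check that the transfer from simplicial to cubical via $\Gamma_{\underline{f}^{[n]}}$ is compatible up to an explicit chain homotopy with the abstract quasi-isomorphism used to compare the two definitions of $CH^p$. The telescoping identity of Proposition \ref{prop telescope} is precisely what produces the required homotopy: reading it as $D\widetilde{AJ}^{p,n}_{\Delta,\x}(\mathfrak{Z})=\widetilde{AJ}^{p,n-1}_{\Delta,\x}(\partial_{\mathcal{B}}\mathfrak{Z})$ and comparing with the KLM version \eqref{eqnResFormula} gives a direct identification at the cochain level, from which compatibility on cohomology follows.
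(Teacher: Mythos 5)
Your argument for well-definedness and for the morphism-of-complexes property is essentially the same as the paper's "direct argument" preceding the three-step proof, and it is correct (modulo minor imprecisions: e.g.\ on $H_n$ the correct observation when some $X_i$ vanish identically is that if $X_0\equiv\cdots\equiv X_{k-1}\equiv 0$ but $X_k\not\equiv 0$ then $-(X_{k+1}+\cdots+X_n)/X_k\equiv 1$; and $\Omega_n^\Delta$ is alternating, not symmetric, with the boundary of $\Omega_n^\Delta$ governed by residues, not literal restriction). The factorization $\widetilde{AJ}^p_{\Delta,\x}=\widetilde{AJ}^p_\x\circ\Gamma^*_\x$ via the correspondence $\Gamma_{\underline{f}^{[n]}}$ is also exactly what the paper establishes in Steps 2–3.

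The gap is in the comparison with Bloch's cycle-class map. You correctly identify the obstacle — one must relate $\Gamma^*_\x$ to the identification of simplicial and cubical higher Chow groups — but your proposed resolution does not work: the telescoping identity of Proposition \ref{prop telescope} only expresses that $\widetilde{AJ}^p_{\Delta,\x}$ is a \emph{chain map}; it does not produce a chain homotopy between $\Gamma^*_\x$ and the Levine quasi-isomorphism, and no such homotopy appears anywhere in the argument you give. In fact the paper never establishes compatibility with Levine's abstract quasi-isomorphism at all. Instead it bypasses the issue by working directly with the extension-class description in Definition \ref{Def BCCM}: it first shows that the morphism $F_n:\square^n\to\Delta^n$ sends $T_n\mapsto T_n^\Delta$, $\Omega_n\mapsto\Omega_n^\Delta$, hence $F^*_{\x,n}$ is an isomorphism $H^m(\Delta^n_\x,\partial\Delta^n_\x)\xrightarrow{\sim}H^m(\square^n_\x,\partial\square^n_\x)$; then for a normalized $\mathfrak{Z}$ it shows $\Gamma_{\x,n}$ restricts to a map $U^\square\to U^\Delta$, inducing an isomorphism between the simplicial extension \eqref{extension} and its cubical analogue \eqref{extension-sq}, so that $c_{\mathcal{B}}(\xi)$ is the extension class of the cubical sequence as well; the result then follows from \cite[Thm.~7.1]{KLM}. (Also handled separately: the degenerate case $\mathfrak{Z}^\square=0$, where $\Gamma_{\x,n}$ gives a splitting of $\mathbb{E}^\Delta$ directly.) Your proposal would need this extension-class comparison — or a genuine proof that $(\Gamma^*_\x)_*$ agrees on homology with the Levine identification — to be complete.
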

The proof is simple but somewhat formal, and so we shall preface it
with a (probably more helpful) direct argument that \eqref{eqn 3.3}
induces a map of complexes. First there is the question of whether
it is well-defined, which splits into ``algebraic'' and ``analytic''
parts. The latter issue, of whether $R_{\mathfrak{Z}}^{\Delta}$ and
$\Omega_{\mathfrak{Z}}^{\Delta}$ are actually in $D^{2p-n-1}(\x)$
resp. $F^{p}D^{2p-n}(\x)$ (since pullbacks $\pi_{\Delta}^{*}$ need
not preserve currents), is implicitly resolved in the proof below
(by the relation to the cubical KLM currents). For reference, we have
also included an explicit argument that $R_{\mathfrak{Z}}^{\Delta}$
is a current in the second appendix to this section.

Now $Z^{p}(\x\times\Delta^{n})=Z^{p}(\x\times\PP^{n})/Z^{p}(\x\times H_{n}),$
and the ``algebraic'' well-definedness refers to the requirement
that $\widetilde{AJ}_{\Delta,\x}^{p,n}$ vanish on admissible precycles
with support in $\x\times H_{n}$. In fact it suffices to check the
following, writing $\overline{\mathfrak{Z}}\in Z^{p}(\x\times\PP^{n+1})$
for the closure of $\mathfrak{Z}\in Z^{p}(\x\times\Delta^{n+1})$.
Given $j\in\{0,\ldots,n+1\}$, $\mathfrak{Z}\in Z_{\Delta,\RR}^{p}(\x,n+1)$,
and $\mathfrak{W}$ an irreducible component of $\rho_{j}^{*}\overline{\mathfrak{Z}}$
sitting inside $\x\times H_{n}$, we must have $R_{\mathfrak{W}}^{\Delta},\Omega_{\mathfrak{W}}^{\Delta},T_{\mathfrak{W}}^{\Delta}$
all zero. But on $H_{n}$ we have $X_{0}+\cdots+X_{n}\equiv0$, and
on $\mathfrak{W}$ we cannot have all $X_{i}\equiv0$. If (say) $X_{0}|_{\mathfrak{W}}\equiv\cdots\equiv X_{k-1}|_{\mathfrak{W}}\equiv0$
but $X_{k}$ is not identically zero, then $-\frac{X_{k+1}+\cdots+X_{n}}{X_{k}}|_{\mathfrak{W}}\equiv1$
and the currents are trivial as desired.

To verify that $\widetilde{AJ}_{\Delta,\x}^{p,-\bullet}$ is a morphism
of complexes, we use the formula in Proposition \ref{prop telescope}.
This gives for each $\mathfrak{Z}\in Z_{\Delta,\RR}^{p}(\x,n)$
\[
d[R_{\mathfrak{Z}}^{\Delta}]=\Omega_{\mathfrak{Z}}^{\Delta}-(2\pi i)^{n}\delta_{T_{\mathfrak{Z}}^{\Delta}}+2\pi i\sum_{j=0}^{n}(-1)^{j}R_{\rho_{j}^{*}\mathfrak{Z}}^{\Delta}
\]
\begin{equation}\label{eqn dR}=\Omega_{\mathfrak{Z}}^{\Delta}-(2\pi i)^{n}\delta_{T_{\mathfrak{Z}}^{\Delta}}+2\pi iR_{\partial_{\mathcal{B}}\mathfrak{Z}}^{\Delta},
\end{equation}while 
\[
\left\{ \begin{array}{c}
\partial T_{n}^{\Delta}=\sum_{j=0}^{n}(-1)^{j-1}\rho_{j}\left(T_{n-1}^{\Delta}\right)\\
d[\Omega_{n}^{\Delta}]=2\pi i\sum_{j=0}^{n}(-1)^{j-1}(\rho_{j})_{*}\Omega_{n-1}^{\Delta}
\end{array}\right.\;\;\implies
\]
\[
\left\{ \begin{array}{c}
\partial T_{\mathfrak{Z}}^{\Delta}=-T_{\partial_{\mathcal{B}}\mathfrak{Z}}^{\Delta}\\
d[\Omega_{\mathfrak{Z}}^{\Delta}]=-2\pi i\Omega_{\partial_{\mathcal{B}}\mathfrak{Z}}^{\Delta}
\end{array}\right.,
\]
and so
\[
D\left((2\pi i)^{n}T_{\mathfrak{Z}}^{\Delta},\Omega_{\mathfrak{Z}}^{\Delta},R_{\mathfrak{Z}}^{\Delta}\right)=2\pi i\left((2\pi i)^{n-1}T_{\partial_{\mathcal{B}}\mathfrak{Z}}^{\Delta},\Omega_{\partial_{\mathcal{B}}\mathfrak{Z}}^{\Delta},R_{\partial_{\mathcal{B}}\mathfrak{Z}}^{\Delta}\right)
\]
which yields $D\circ\widetilde{AJ}=\widetilde{AJ}\circ\partial_{\mathcal{B}}$
as needed.

As mentioned in the Introduction, the simplicial $AJ$ formula will
be particularly natural for linear higher Chow cycles derived from
elements of $H_{2n-1}(GL_{n}(K),\ZZ)$ ($K$ a number field). While
we won't pursue this application in the present paper, here is an
example of what this will look like on an irreducible component of
such a cycle.
\begin{example}
Let $\alpha\in\CC\backslash\RR_{\leq0}$, and consider the linear
precycle $\mathfrak{Z}:=$ 
\[
\left\{ [\alpha Z-W:Z:-Z:W]\,|\,[Z:W]\in\PP^{1}(\mathbb{C})\right\} \in Z_{\Delta,\RR}^{2}(\text{Spec }\CC,3),
\]
for $\alpha\in\CC\backslash[1,\infty)$. It has boundary $\partial\mathfrak{Z}=[1:-1:\alpha]-[\alpha:1:-1],$
and should be thought of as a simplicial analogue of the Totaro (pre)cycle. 

Writing $z:=\frac{Z}{W}$ for the coordinate on $\PP^{1}$, $R_{\mathfrak{Z}}^{\Delta}\in Ext_{\text{MHS}}^{1}(\QQ(0),\QQ(2))\cong\CC/\QQ(2)$
is computed by
\[
\int_{\mathfrak{Z}}R_{3}^{\Delta}=\int_{\PP^{1}}R\left(-\frac{W}{\alpha Z-W},-\frac{W-Z}{Z},-\frac{W}{-Z}\right)
\]
\[
=\int_{\PP^{1}}R\left(\frac{1}{1-\alpha z},1-\frac{1}{z},\frac{1}{z}\right).
\]
Since $T_{\frac{1}{1-\alpha z}}=\left(\tfrac{1}{\alpha},\infty\right):=\left\{ \tfrac{r}{\alpha}\left|r\in\RR_{>0}\right.\right\} $
(oriented from $\tfrac{1}{\alpha}$ to $\infty$) and $T_{\frac{1}{1-\alpha z}}\cap T_{1-\frac{1}{z}}=(\tfrac{1}{\alpha},\infty)\cap(0,1)=\emptyset$,
this
\[
=\int_{T_{\frac{1}{1-\alpha z}}}\log\left(1-\tfrac{1}{z}\right)\text{dlog}\left(\tfrac{1}{z}\right)
\]
\[
=Li_{2}(\alpha).
\]
\end{example}
\begin{rem}
\label{rem Go}The currents $S_{n}^{\Delta}$ are closer than the
$R_{n}^{\Delta}$ to being invariant with respect to scaling the coordinates,
which apparently makes them more suitable for studying reciprocity
laws and functional equations of polylogarithms. However, they fail
to yield well-defined $AJ$ maps, as they do not vanish on $H_{n}$. 

The real $(n-1)$-currents $r_{n}$ of \cite{Go1} more dramatically
illustrate the problem, as they are actually invariant under scaling
of coordinates, and are prevented \emph{by this property} from vanishing
on $H_{n}$, and hence from defining simplicial $AJ$ maps as claimed
in {[}op. cit.{]}. That they do nevertheless produce $AJ$ maps on
the cubical level, coinciding with the real or imaginary part of Bloch's
invariants, was checked in \cite[sec. 3.1.1]{Ke1}.

It is instructive to demonstrate the issue for $r_{3}$. Let $a\in\CC\backslash\RR$.
According to \cite[Thm. 3.6]{Go1}, 
\[
\int_{\PP^{1}}r_{3}\left(z,1-z,z-a\right)=\frac{1}{2\pi i}D_{2}(a)
\]
where $D_{2}$ is the Bloch-Wigner function.%
\footnote{In fact, for our purposes it suffices to know that the integral is
nonzero. This reduces to nonvanishing of $\int_{\PP^{1}}\log|z-a|\text{dlog}|z|\wedge\text{dlog}|1-z|$,
which follows from that of $\int_{\RR^{2}}\tfrac{y\log|z-a|}{|z|^{2}|1-z|^{2}}dA$
for $a\notin\RR$.%
} By \cite[Prop. 3.2]{Go1}, this
\[
=\int_{\PP^{1}}r_{3}\left(z,2-2z,z-a\right)=\int_{\PP^{1}}r_{3}\left(\frac{z}{a-2},\frac{2-2z}{a-2},\frac{z-a}{a-2}\right)
\]
\[
=\int_{\mathfrak{Z}_{a}}r_{3}(x_{1},x_{2},x_{3}),
\]
where $\mathfrak{Z}_{a}=\left\{ [a-2:z:2-2z:z-a]\,|\, z\in\PP^{1}\right\} .$
But $(a-2)+z+(2-2z)+(z-a)\equiv0$ $\implies$ $\mathfrak{Z}_{a}\subset H_{3}$
$\implies$ $\mathfrak{Z}_{a}=0\in Z^{2}(\CC,3)$. So $\mathfrak{Z}\mapsto\int_{\mathfrak{Z}}r_{3}$
does not induce a well-defined map $\tilde{r}:\, Z^{2}(\CC,3)\to\RR(1)$.

If one tries to make $\tilde{r}$ well-defined by insisting that it
be ``zero on zero'', another problem emerges: we do not obtain a
map of complexes
\[
\begin{array}{ccccccc}
\cdots\to & Z^{2}(\CC,4) & \to & Z^{2}(\CC,3) & \to & Z^{2}(\CC,2) & \to\cdots\\
 & \downarrow &  & \;\downarrow\tilde{r} &  & \downarrow\\
\cdots\to & 0 & \to & \RR(1) & \to & 0 & \to\cdots.
\end{array}
\]
If we take $\tilde{\mathfrak{Z}}:=$
\[
\left\{ [V:(a-2)(W+V):Z-V:2(W-Z):Z-aW]\right\} _{[V:W:Z]\in\PP^{1}}
\]
in $Z^{2}(\CC,4)$, then $\partial\tilde{\mathfrak{Z}}=\sum_{j=1}^{4}(-1)^{j}\rho_{j}^{*}\tilde{\mathfrak{Z}}$
since $\rho_{0}^{*}\tilde{\mathfrak{Z}}=\mathfrak{Z}_{a}=0$. But
the reciprocity properties of $r_{3}$ imply $\sum_{j=0}^{4}(-1)^{j}\int_{\rho_{j}^{*}\tilde{\mathfrak{Z}}}r_{3}=0$,
which gives 
\[
\tilde{r}(\partial\tilde{\mathfrak{Z}})=\sum_{j=1}^{4}(-1)^{j}\int_{\rho_{j}^{*}\tilde{\mathfrak{Z}}}r_{3}=-\int_{\rho_{0}^{*}\tilde{\mathfrak{Z}}}r_{3}=-\tfrac{1}{2\pi i}D_{2}(a)\neq0.
\]
So apparently, the only way to fix the problem is to replace $r_{3}=r_{3}\left(\tfrac{X_{1}}{X_{0}},\tfrac{X_{2}}{X_{0}},\tfrac{X_{3}}{X_{0}}\right)$
by something like $r_{3}\left(-\tfrac{X_{1}+X_{2}+X_{3}}{X_{0}},-\tfrac{X_{2}+X_{3}}{X_{1}},-\tfrac{X_{3}}{X_{2}}\right)$,
which affects its properties and calls into question (for example)
the known proof%
\footnote{cf. Prop. 16 in \cite{dJ}; we do expect that this can be fixed.%
} that linear higher Chow groups of number fields surject onto the
usual higher Chow cycles.
\end{rem}

\subsection*{Proof of Theorem \ref{AJ Thm}}

We shall need the subcomplexes of \emph{normalized} precycles 
\[
N_{\Delta}^{p}(\x,n):=\bigcap_{j=0}^{n-1}\ker\rho_{j}^{*}\subset Z_{\Delta}^{p}(\x,n)
\]
\[
N_{\Delta,\RR}^{p}(\x,n):=N_{\Delta}^{p}(\x,n)\cap Z_{\Delta,\RR}^{p}(\x,n)
\]
and the following ``moving lemma'' (verified in the first appendix
to this section):
\begin{prop}
\label{ML2}$H_{n}(N_{\Delta,\RR}^{p}(\x,\bullet))\cong H_{n}(N_{\Delta}^{p}(\x,\bullet))\cong H_{n}(Z_{\Delta}^{p}(\x,\bullet)).$
\end{prop}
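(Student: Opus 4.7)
The plan is to reduce Proposition \ref{ML2} to Proposition \ref{ML1} together with a ``normalized Moore complex'' quasi-isomorphism. Since $N_\Delta^p(\x,n) = \bigcap_{j=0}^{n-1} \ker\rho_j^*$, the Bloch differential on the normalized subcomplex reduces to $(-1)^n \rho_n^*$. Combined with \ref{ML1} (which gives $H_n(Z_{\Delta,\RR}^p(\x,\bullet)) \cong H_n(Z_\Delta^p(\x,\bullet))$), it suffices to establish a single statement: for each of the two complexes $Z \in \{Z_\Delta^p(\x,\bullet),\, Z_{\Delta,\RR}^p(\x,\bullet)\}$, the inclusion of the corresponding normalized subcomplex into $Z$ induces an isomorphism on homology. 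The two isomorphisms of \ref{ML2} then follow from the commutative square whose vertical maps are the normalization quasi-isomorphisms and whose bottom horizontal map is the \ref{ML1} isomorphism.

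The key step is thus the normalization statement. I would introduce the intermediate subcomplexes $N_{(k)}^p := \bigcap_{j=0}^{k} \ker \rho_j^* \subset Z_\Delta^p(\x,\bullet)$, with $N_{(-1)}^p = Z_\Delta^p$ and $N_{(n-1)}^p = N_\Delta^p$ at simplicial degree $n$, and induct on $k$ to show each inclusion $N_{(k)}^p \hookrightarrow N_{(k-1)}^p$ is a quasi-isomorphism. In the classical Dold--Kan setup this is formal given degeneracies, but Bloch's simplicial complex has no honest degeneracies: pullback along a linear section $s_k$ of $\rho_k$ does not preserve proper intersection with faces. One must therefore implement the degeneracy geometrically. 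Given a cycle $\mathfrak{Z} \in N_{(k-1)}^p$, the naive correction $\mathfrak{Z} - s_k^* \rho_k^* \mathfrak{Z}$ kills the offending face pullback and is rationally equivalent to $\mathfrak{Z}$ via an explicit cylinder precycle; but $s_k^* \rho_k^* \mathfrak{Z}$ need not itself be admissible. A Bloch-style moving lemma, in the spirit of the one used in \ref{ML1}, provides a generic translate of $s_k^* \rho_k^* \mathfrak{Z}$ that restores properness against all remaining faces. For the $\RR$-admissible version, the translate is further required to preserve transversality with the chains $T_\ell^\Delta$.

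The hard part will be ensuring that the perturbation producing an admissible precycle at stage $k$ also lies in $N_{(k)}^p$, i.e.\ that in repairing one face-pullback defect we do not reintroduce non-vanishing pullbacks along previously normalized faces. The expected remedy is to select at each inductive step a generic element of a sufficiently small neighborhood of the identity in an algebraic group acting on $\PP^n$ and preserving the stratification by $\{\rho_j(\Delta^{n-1})\}_{j=0}^n$; a standard Kleiman-type argument then delivers simultaneous transversality with all required strata and chains. Coordinating these compatibility conditions with the cylinder construction for rational equivalence is the technical core of the argument, and is what is deferred to the first appendix to this section.
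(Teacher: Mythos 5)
Your reduction to a pair of ``normalization is a quasi-isomorphism'' statements, combined with Proposition \ref{ML1} and a commutative square, is a logically sound outline. But the central claim driving the technical portion of your proposal is incorrect: Bloch's simplicial complex \emph{does} have honest degeneracies, and the paper constructs them explicitly. The degeneracy maps are the projections $\sigma_i\colon\Delta^{n+1}\to\Delta^n$ given by
\[
[X_0:\cdots:X_{n+1}]\mapsto[X_0:\cdots:X_{i-1}:X_i+X_{i+1}:X_{i+2}:\cdots:X_{n+1}],
\]
with $s_i:=(\mathrm{id}_\x\times\sigma_i)^*$. These are not linear \emph{sections} of the face inclusions $\rho_k$ embedding into bad loci, as you describe; they are smooth surjective projection-type maps, and pullback along them is flat and manifestly preserves proper intersection with all faces. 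The simplicial identities $\partial_i s_j$ of \eqref{ID} hold on the nose. The normalization quasi-isomorphism $N_\Delta^p\hookrightarrow Z_\Delta^p$ is therefore a formal Dold--Kan-type argument: one filters by $\F^\ell:=\bigcap_{i<\min(n,\ell)}\ker\partial_i$, defines the retraction $\kappa_\ell(\xi)=\xi-s_\ell\partial_\ell\xi$ and the homotopy $T_\ell(\xi)=(-1)^\ell s_\ell\xi$, and verifies the contraction identity $\partial T_\ell+T_\ell\partial=\mathrm{id}-\lambda_\ell\kappa_\ell$ using only \eqref{ID}. No moving lemma, no generic translation, and no delicate compatibility-of-perturbations argument is needed for this step. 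The same holds for $N_{\Delta,\RR}^p\hookrightarrow Z_{\Delta,\RR}^p$ because one checks directly that $\sigma_i$ carries each real chain $\tau_{k,J}^{n+1}$ into a chain $\tau_{k',J'}^n$ of the same dimension, so $s_\ell$, $\kappa_\ell$, $T_\ell$ all preserve the $\RR$-admissibility condition.

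The place where a genuine moving lemma \emph{is} needed is exactly the step your outline tries to sidestep by invoking \ref{ML1}: showing $N_{\Delta,\RR}^p\hookrightarrow N_\Delta^p$ (the left vertical arrow $i_2$ of the commutative square) is a quasi-isomorphism. The paper proves this directly by adapting the argument of \cite[Appendix to 8.2]{KL} and Levine's Kleiman-transversality technique, and then \emph{deduces} \ref{ML1} (the right vertical arrow $i_3$) from the square. Your proposal runs the square the other way, taking $i_3$ as given from \ref{ML1} and deducing $i_2$. That is formally legitimate, but it does not eliminate the moving lemma; it merely relocates it to the proof of \ref{ML1}, which in the paper is established via the very same ingredients. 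So where your write-up expends its effort --- coordinating perturbations with a cylinder construction to repair admissibility defects created by a nonexistent section --- the actual difficulty lies elsewhere, in the single step $i_2$. The Kleiman/Levine transversality argument you gesture at in your penultimate sentence is the right tool, but it should be deployed for $i_2$, not for the normalization steps, which are soft.
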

With this, we may define the Bloch cycle-class map:
\begin{defn}
\label{Def BCCM} Let $\xi\in CH^{p}(\x,n)$ ($n\geq1$) have normalized
representative $\mathfrak{Z}\in\ker(\partial_{\mathcal{B}})\subset N_{\Delta}^{p}(\x,n)$;
that is, all $\rho_{j}^{*}\mathfrak{Z}=0$. Denoting $\x\times\Delta^{n}=:\Delta_{\x}^{n}$,
etc., the localization sequence for
\[
(U^{\Delta},\partial U^{\Delta}):=\left(\Delta_{\x}^{n}\backslash\left|\mathfrak{Z}\right|,\partial\Delta_{\x}^{n}\backslash\left\{ \left|\mathfrak{Z}\right|\cap\partial\Delta_{\x}^{n}\right\} \right)
\]
leads to an extension (with $\QQ(p)$-coefficients) \small \begin{equation}\label{extension}\xymatrix{H^{2p-n-1}(\x) \ar @{=} [d] \ar @{^(->} [r] & \mathbb{E}^{\Delta} \ar @{^(->} [d] \ar @{->>} [r] & \QQ(-p)= \langle \mathfrak{Z} \rangle \ar @{^(->} [d]
\\
H^{2p-1}\left( \Delta^n_{\x},\partial \Delta^n_{\x} \right) \ar @{^(->} [r] & H^{2p-1}(U^{\Delta},\partial U^{\Delta}) \ar @{->>} [r] & H^{2p}_{|\mathfrak{Z}|}\left( \Delta^n_{\x},\partial \Delta^n_{\x} \right)^{\circ} }
\end{equation}\normalsize We \emph{define} 
\[
c_{\mathcal{B}}(\xi)\in Ext_{\text{MHS}}^{1}\left(\QQ(0),H^{2p-n-1}(\x,\QQ(p))\right)\cong H_{\mathcal{D}}^{2p-n}(\x,\QQ(p))
\]
to be the extension class of the top sequence.
\end{defn}
The proof of the Theorem will now proceed in the three steps:\vspace{2mm}

$\underline{\text{Step 1}}:$ \emph{The cube-to-simplex map.} Recall
that $\Gamma_{\underline{f}^{[n]}}$ (cf. \eqref{*S2}) is the restriction
to $\PP^{n}\times\square^{n}$ of the correspondence in $\PP^{n}\times(\PP^{1})^{n}$
given by\begin{equation}\label{eqn matrix}\underset{A(\underline{\lambda},\underline{\sigma})}{\underbrace{\left[\begin{array}{cccccc} \lambda_{1} & \sigma_{1} & \sigma_{1} & \cdots & \sigma_{1} & \sigma_{1}\\ 0 & \lambda_{2} & \sigma_{2} & \cdots & \sigma_{2} & \sigma_{2}\\ 0 & 0 & \lambda_{3} & \cdots & \sigma_{3} & \sigma_{3}\\ \vdots & \vdots & \vdots & \ddots & \vdots & \vdots\\ 0 & 0 & 0 & \cdots & \lambda_{n} & \sigma_{n} \end{array}\right]}}\left[\begin{array}{c} X_{0}\\ X_{1}\\ \vdots\\ X_{n} \end{array}\right]=\left[\begin{array}{c} 0\\ 0\\ \vdots\\ 0 \end{array}\right]
\end{equation}where $[\sigma_{j}:\lambda_{j}]$ are projective coordinates on $\PP_{z_{j}}^{1}$
($z_{j}=\tfrac{\lambda_{j}}{\sigma_{j}}$). Observe that the $(n+1)\times(n+1)$
matrix $B(\underline{\lambda},\underline{\sigma})$, obtained by adding
a row of ones to $A(\underline{\lambda},\underline{\sigma})$, has\begin{equation}\label{eqn det}\det B(\underline{\lambda},\underline{\sigma})=\prod_{j=1}^n (\sigma_j - \lambda_j ).
\end{equation}Now \eqref{eqn det} implies that
\begin{itemize}
\item $A(\underline{\lambda},\underline{\sigma})$ has maximal rank, so
that $\Gamma_{\underline{f}^{[n]}}$ induces a well-defined morphism
from $\square^{n}$ to $\PP^{n}$; and
\item $B(\underline{\lambda},\underline{\sigma})[\underline{X}]=[\underline{0}]$
has no nonzero solution, so that the image of this map avoids the
hyperplane $H_{n}$ (where $X_{0}+\cdots+X_{n}=0$).
\end{itemize}
We shall write
\[
F_{n}:\,\square^{n}\to\Delta^{n}
\]
for this morphism and $\Gamma_{n}\in Z^{n}(\Delta^{n}\times\square^{n})$
for the associated correspondence. The explicit formula\begin{multline}\label{eqn Fn}F_n\left( [\si_1:\la_1],\ldots,[\si_n:\la_n]\right) \; = \; \left[ \si_1 (\si_2-\la_2)\cdots (\si_n-\la_n) :\right.
\\
-\la_1\si_2(\si_3-\la_3)\cdots (\si_n-\la_n) : \la_1\la_2\si_3(\si_4-\la_4)\cdots(\si_n-\la_n) 
\\
\left. :\cdots : (-1)^{n-1}\la_1\cdots \la_{n-1}\si_n : (-1)^n \la_1\cdots\la_n \right]
\end{multline}makes it clear that $F_{n}(\partial\square^{n})\subset\partial\Delta^{n}$.
The induced map 
\[
(F_{n})_{*}:\, H_{n}(\square^{n},\partial\square^{n})\to H_{n}(\Delta^{n},\partial\Delta^{n})
\]
is an isomorphism since it sends representatives $T_{n}\mapsto T_{n}^{\Delta}$,
$\Omega_{n}\mapsto\Omega_{n}^{\Delta}$. (This is essentially the
same computation as in Lemma \ref{lem 2.2}.) Hence for any (smooth
projective) $\x$, denoting $\x\times\square^{n}=:\square_{\x}^{n}$,
etc., 
\[
F_{\x,n}^{*}:\, H^{m}(\Delta_{\x}^{n},\partial\Delta_{\x}^{n})\to H^{m}(\square_{\x}^{n},\partial\square_{\x}^{n})
\]
is an isomorphism for any $m$.\vspace{2mm}

$\underline{\text{Step 2}}:$ \emph{Simplicial to cubical precycles.}
The morphism $F_{n}$ is the composition of an inclusion (of $\square^{n}$
into a larger open subset of $(\PP^{1})^{n}$) with a sequence of
blow-ups at the smooth centers: $X_{1}=\cdots=X_{n}=0$; and (successive
proper transforms of) $X_{2}=\cdots=X_{n}=0$, $\ldots$, $X_{n-1}=X_{n}=0$.
Its positive-dimensional fibers are contained in $\cup_{j=1}^{n-1}\imath_{j}^{0}(\square^{n-1})$
and are degenerate in the sense that one or more $z_{i}$'s (in fact,
$z_{j+1}$ thru $z_{n}$) are arbitrary. For cycles on $\x\times\Delta^{n}$
meeting the blow-up centers properly (which includes $Z_{\Delta}^{p}(\x,n)$),
the pullback under $\text{id}_{X}\times F_{n}:\,\x\times\square^{n}\to\x\times\Delta^{n}$
is well-defined.%
\footnote{That is, the ``preimage'' $(\pi_{13})_{*}(\pi_{12}^{*}\mathfrak{Z}\cdot\pi_{23}^{*}\Gamma_{n})$
of an irreducible $\mathfrak{Z}$ (where $\pi_{ij}$ are the projections
on $\x\times\Delta^{n}\times\square^{n}$) already yields the proper
transform, without having to throw out ``exceptional'' components
contained in $\cup_{j=1}^{n-1}\imath_{j}^{0}(\square^{n-1})$.%
} This yields a map
\[
\begin{array}{cccc}
\Gamma_{\x,n}^{*}: & Z_{\Delta,\RR}^{p}(\x,n) & \to & Z^{p}(\x\times\square^{n})\\
 & \mathfrak{Z} & \mapsto & \mathfrak{Z}^{\square}
\end{array}
\]
which we claim factors through $Z_{\RR}^{p}(\x,n)$.

Let $\tau$ be one of the real chains $T_{z_{1}}\cap\cdots\cap T_{z_{k}}\cap\imath_{J}^{\underline{\epsilon}}(\square^{n-|J|})$,
say of (real) codimension $c$. Inspection of \eqref{eqn matrix}
and \eqref{eqn Fn} shows that $\tau^{\Delta}:=F_{n}(\tau)$ is one
of the real chains $T_{\frac{X_{1}+\cdots+X_{n}}{-X_{0}}}\cap\cdots\cap T_{\frac{X_{\ell}+\cdots+X_{n}}{-X_{\ell-1}}}\cap\rho_{I}(\Delta^{n-|I|})$,
of codimension $c^{\Delta}\geq c$. Since $|\mathfrak{Z}^{\square}|\cap(\x\times\tau)\subset F_{n}^{-1}\left(|\mathfrak{Z}|\cap(\x\times\tau^{\Delta})\right)$,
we have
\[
\dim_{\RR}\left(|\mathfrak{Z}|\cap(\x\times\tau)\right)\leq(c^{\Delta}-c)+\dim_{\RR}\left(|\mathfrak{Z}|\cap(\x\times\tau^{\Delta})\right).
\]
Moreover, as $\mathfrak{Z}\in Z_{\Delta,\RR}^{p}(\x,n)$, we have
$\text{codim}_{\RR}^{|\mathfrak{Z}|}\left(|\mathfrak{Z}|\cap(\x\times\tau^{\Delta})\right)\geq c^{\Delta}$;
it follows that $\text{codim}_{\RR}^{|\mathfrak{Z}^{\square}|}\left(|\mathfrak{Z}^{\square}|\cap(\x\times\tau)\right)\geq c$.
Since $\tau$ was arbitrary, $\mathfrak{Z}^{\square}\in Z_{\RR}^{p}(\x,n)$.

Next we claim that
\[
\Gamma_{\x,\bullet}^{*}:\, Z_{\Delta,\RR}^{p}(\x,\bullet)\to Z_{\RR}^{p}(\x,\bullet)
\]
is a map of complexes, i.e. that \begin{equation} \label{eq !}
\Gamma^*_{\x,n-1}(\partial_{\mathcal{B}}\mathfrak{Z})=\sum_{j=0}^n (-1)^j\Gamma_{n-1}^*\rho_j^*\mathfrak{Z}
\end{equation}and\begin{multline} \label{eq !!}
\partial_{\mathcal{B}}(\Gamma_{\x,n}^*\mathfrak{Z})=\left[-\sum_{j=1}^n (-1)^j (\imath_j^{\infty})^*\mathfrak{Z}^{\square} + (-1)^n (\imath^0_n)^*\mathfrak{Z}^{\square} \right]
\\
+ \sum_{j=1}^{n-1} (-1)^j (\imath^0_j)^* \mathfrak{Z}^{\square} .
\end{multline}agree. Inspection of \eqref{eqn Fn} shows that $F_{n}$ restricts
to $F_{n-1}$ on the facets $\imath_{j}^{\infty}(\square^{n-1})\,\left(\to\rho_{j-1}(\Delta^{n-1})\right)$
(for $j=1,\ldots,n$) and $\imath_{n}^{0}(\square^{n-1})\,\left(\to\rho_{n}(\Delta^{n-1})\right)$,
so that the right-hand side of \eqref{eq !} coincides with the square-bracketed
term in \eqref{eq !!}. The restrictions of $F_{n}$ to the other
facets map $\imath_{j}^{0}(\square^{n-1})\underset{\psi_{j}}{\to}\rho_{\{j,\ldots,n\}}(\Delta^{j-1})$
(with degenerate fibers as mentioned above) for any $j=1,\ldots,n-1$.
Since $\mathfrak{Z}$ meets these $\x\times\rho_{\{j,\ldots,n\}}(\Delta^{j-1})$
properly (in complex codim. $\geq n-j+1$), $\mathfrak{Z}^{\square}$
meets $\x\times\imath_{j}^{0}(\square^{n-1})$ in the $\psi_{j}$-preimage,
which is degenerate. So the remaining terms on the right-hand side
of \eqref{eq !!} are zero in $Z_{\RR}^{p}(\x,n)$.\vspace{2mm}

$\underline{\text{Step 3}}:$ \emph{$AJ_{\Delta}$ and Bloch's map}.
Let 
\[
\widetilde{AJ}_{\x}^{p,-\bullet}:\, Z_{\RR}^{p}(\x,-\bullet)\to C_{\mathcal{D}}^{2p+\bullet}(\x;\QQ(p))
\]
be the map of complexes defined by sending a precycle $\mathcal{W}\in Z_{\RR}^{p}(\x,n)$
to $(2\pi i)^{p-n}\left((2\pi i)^{n}T_{\mathcal{W}},\Omega_{\mathcal{W}},R_{\mathcal{W}}\right)$
\cite{KLM}.%
\footnote{See the beginning of $\S2$ for $T_{\mathcal{W}},\Omega_{\mathcal{W}},R_{\mathcal{W}}$.%
} We claim that the composition
\[
Z_{\RR,\Delta}^{p}(\x,-\bullet)\overset{\Gamma_{\x}^{*}}{\longrightarrow}Z_{\RR}^{p}(\x,-\bullet)\overset{\widetilde{AJ}_{\x}^{p}}{\longrightarrow}C_{\mathcal{D}}^{2p+\bullet}(\x;\QQ(p))
\]
is none other than the $\widetilde{AJ}_{\Delta,\x}^{p}$ of \eqref{AJdef},
proving the first statement of Theorem \ref{AJ Thm}. The point is
that from Lemmas \ref{lemma 2.1}-\ref{lem 2.2} we have $\Gamma_{n}^{*}(T_{n},\Omega_{n},R_{n})=(T_{n}^{\Delta},\Omega_{n}^{\Delta},R_{n}^{\Delta})$
so that \begin{flalign*}
\left( T_n^{\Delta},\Omega_n^{\Delta},R_n^{\Delta}\right) = & \; (\mathfrak{Z}^{\square})^* \left( T_n,\Omega_n,R_n\right)  \\
= & \; {\mathfrak{Z}^* \Gamma^* \left( T_n,\Omega_n,R_n\right) }  \\
= & \; {\mathfrak{Z}^* \left( T_n^{\Delta},\Omega_n^{\Delta},R_n^{\Delta}\right) }  \\
= & \; {\left( T_{\mathfrak{Z}}^{\Delta},\Omega_{\mathfrak{Z}}^{\Delta},R_{\mathfrak{Z}}^{\Delta}\right) } 
\end{flalign*}(where the pullbacks of currents are well-defined by those lemmas
and by \cite{KLM}).

Finally we let $\mathfrak{Z}$ be a normalized (simplicial) precycle
as in Definition \ref{Def BCCM}, with class $\xi$. By the analysis
in Step 2, we have that $\mathfrak{Z}^{\square}:=\Gamma_{\x,n}^{*}\mathfrak{Z}\in Z_{\RR}^{p}(\x,n)$
belongs to $\bigcap_{j,\epsilon}\ker(\imath_{j}^{*})$. Note that
we may have $\mathfrak{Z}\neq0$ but $\mathfrak{Z}^{\square}=0$.
In this case, $\Gamma_{\x,n}$ yields a map from $\left(\square_{\x}^{n},\partial\square_{\x}^{n}\right)\to\left(U^{\Delta},\partial U^{\Delta}\right)$,
which produces a splitting $\mathbb{E}^{\Delta}\to H^{2p-1}(\square_{\x}^{n},\partial\square_{\x}^{n})=H^{2p-n-1}(\x)$.
Hence $c_{\mathcal{B}}(\xi)=0=[\widetilde{AJ}_{\x}^{p}(0)]=[\widetilde{AJ}_{\Delta,\x}^{p}(\mathfrak{Z})],$
finishing the proof in this case.

So assume that $\mathfrak{Z}^{\square}$ is nonzero. Writing 
\[
(U^{\square},\partial U^{\square}):=\left(\square_{\x}^{n}\backslash|\mathfrak{Z}^{\square}|,\partial\square^{n}\backslash\{|\mathfrak{Z}^{\square}|\cap\partial\square_{\x}^{n}\}\right),
\]
we get an extension \small \begin{equation}\label{extension-sq}\xymatrix{H^{2p-n-1}(\x) \ar @{=} [d] \ar @{^(->} [r] & \mathbb{E}^{\square} \ar @{^(->} [d] \ar @{->>} [r] & \QQ(-p)= \langle \mathfrak{Z}^{\square} \rangle \ar @{^(->} [d]
\\
H^{2p-1}\left( \square^n_{\x},\partial \square^n_{\x} \right) \ar @{^(->} [r] & H^{2p-1}(U^{\square},\partial U^{\square}) \ar @{->>} [r] & H^{2p}_{|\mathfrak{Z}|}\left( \square^n_{\x},\partial \square^n_{\x} \right)^{\circ} }
\end{equation}\normalsize analogous to \eqref{extension}. In fact, $\Gamma_{\x,n}$
restricts to a map from $U^{\square}\to U^{\Delta}$ sending $\partial U^{\square}\to\partial U^{\Delta}$,
hence induces a map from the bottom row of \eqref{extension} to the
bottom row of \eqref{extension-sq}. By the end of Step 1, this is
an isomorphism on the left-hand terms. Since $\mathfrak{Z}^{\square}=\Gamma_{\x,n}^{*}\mathfrak{Z}$,
it also sends the $\QQ(0)$ to the $\QQ(0)$ and so gives an isomorphism
of the top rows.

Hence $c_{\mathcal{B}}(\xi)$ is the extension class \emph{also} of
the top row of \eqref{extension-sq}, which by \cite[Thm. 7.1]{KLM}
is computed by $AJ_{\x}^{p,n}(\mathfrak{Z}^{\square})$. Since $AJ_{\Delta,\x}^{p,n}(\mathfrak{Z})=AJ_{\x}^{p,n}(\mathfrak{Z}^{\square})$,
we are done.
\begin{rem}
A (much longer) direct proof of Theorem 3.2 could also be given, basically
by repeating the argument in $\S$5.8 and $\S$7 of \cite{KLM} in
the simplicial setting.
\end{rem}

\subsection*{Appendix I to $\S3$: proof of moving lemmas \ref{ML1}, \ref{ML2}}

We preface the actual proof with some simplicial algebra. Recall the
face maps $\rho_{i}:\Delta^{n-1}\hookrightarrow\Delta^{n}$ and define
degeneracy maps $\sigma_{i}:\Delta^{n+1}\to\Delta^{n}$ by
\[
[X_{0}:\cdots:X_{n+1}]\mapsto[X_{0}:\cdots:X_{i-1}:X_{i}+X_{i+1}:X_{i+2}:\cdots:X_{n+1}].
\]
For all $i=0,\ldots,n,$ set
\[
\partial_{i}:=(id_{\x}\times\rho_{i})^{*}:\, Z_{\Delta}^{p}(\x,n)\to Z_{\Delta}^{p}(\x,n-1)
\]
(so that $\partial=\sum_{i=0}^{n}(-1)^{i}\partial_{i}$) and
\[
s_{i}:=(id_{\x}\times\sigma_{i})^{*}:\, Z_{\Delta}^{p}(\x,n)\to Z_{\Delta}^{p}(\x,n+1).
\]
One has the relations\begin{equation}\label{ID} \begin{matrix} \del_i\del_j = \del_{j-1}\del_i&{\rm if}& i<j\\ \del_is_j = s_{j-1}\del_i&{\rm if}& i<j\\ \del_is_j = {\rm Id}&{\rm if}&i=j \ {\rm or}\ i=j+1\\ \del_is_j = s_j\del_{i-1}&{\rm if}&i>j+1\\ s_is_j = s_{j+1}s_i&{\rm if}&i\leq j \end{matrix} \end{equation}Also
recall the normalized complex with terms 
\[
N_{\Delta}^{p}(\x,n):=\cap_{i=0}^{n-1}\ker(\partial_{i})\subset Z_{\Delta}^{p}(\x,n).
\]

We introduce a filtration:
\[
\z^{p}(\x,\bullet)\supset\F^{0}\z^{p}(\x,\bullet)\supset\F^{1}\z^{p}(\x,\bullet)\supset\cdots\supset\n^{p}(\x,\bullet)
\]
as follows: for $\ell\geq0$, put
\[
\F^{\ell}\z^{p}(\x,n)=\big\{\xi\in\z^{p}(\x,n)\ \big|\ \del_{i}\xi=0,\ \forall\ 0\leq i<{\rm min}(n,\ell)\big\}.
\]
Let
\[
\lambda_{\ell}:\F^{\ell+1}\z^{p}(\x,\bullet)\subset\F^{\ell}\z^{p}(\x,\bullet)
\]
be the inclusion of chain complexes.
\begin{lem}
$\lambda_{\ell}$ is a quasi-isomorphism.\label{lemma 5.1}\end{lem}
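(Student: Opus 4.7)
The plan is to exhibit an explicit chain homotopy equivalence between $\mathcal{F}^\ell Z_\Delta^p(\x,\bullet)$ and its subcomplex $\mathcal{F}^{\ell+1}Z_\Delta^p(\x,\bullet)$, modeled on the classical Dold--Kan style proof that the normalized subcomplex of a simplicial abelian group is homotopy equivalent to the unnormalized one. The degeneracy $s_\ell$ will play the central role, and the identities \eqref{ID} are exactly what make the computation go through.

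Concretely, I would define a map $f:\mathcal{F}^\ell Z_\Delta^p(\x,\bullet)\to\mathcal{F}^\ell Z_\Delta^p(\x,\bullet)$ by $f(\xi):=\xi-s_\ell\partial_\ell\xi$ in degree $n\ge\ell+1$ (and the identity in degrees $n\le\ell$, where $\mathcal{F}^\ell_n=\mathcal{F}^{\ell+1}_n$ anyway). Three things need to be checked. First, $f$ lands in $\mathcal{F}^{\ell+1}$: for $i<\ell$ the identity $\partial_i s_\ell=s_{\ell-1}\partial_i$ and $\partial_i\xi=0$ kill the correction, and for $i=\ell$ the identity $\partial_\ell s_\ell=\mathrm{id}$ gives $\partial_\ell f(\xi)=\partial_\ell\xi-\partial_\ell\xi=0$. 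Second, $f$ restricted to $\mathcal{F}^{\ell+1}$ is the identity, since $\partial_\ell\xi=0$ there. Third, $f$ is chain-homotopic to the identity via $h:=(-1)^\ell s_\ell$. This last computation uses $\partial_i s_\ell=s_{\ell-1}\partial_i$ ($i<\ell$), $\partial_\ell s_\ell=\partial_{\ell+1}s_\ell=\mathrm{id}$, and $\partial_i s_\ell=s_\ell\partial_{i-1}$ ($i>\ell+1$): plugging these in, the contributions for $i=\ell$ and $i=\ell+1$ in $\partial s_\ell\xi$ cancel, the contributions for $i<\ell$ vanish since $\xi\in\mathcal{F}^\ell$, and the remaining terms reindex so as to cancel all terms of $s_\ell\partial\xi$ except $(-1)^\ell s_\ell\partial_\ell\xi$. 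Hence $\partial h+h\partial=s_\ell\partial_\ell=\mathrm{id}-f$.

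Combining these three points, $f$ is a retraction of $\mathcal{F}^\ell$ onto $\mathcal{F}^{\ell+1}$ that is chain homotopic to $\mathrm{id}_{\mathcal{F}^\ell}$, so the inclusion $\lambda_\ell$ is a (strong) deformation retraction of chain complexes, in particular a quasi-isomorphism.

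The one genuinely delicate point, and where I expect the real work to sit, is not the simplicial identities themselves but verifying that the degeneracy map $s_\ell=(\mathrm{id}_\x\times\sigma_\ell)^*$ is actually defined on the admissible subcomplex $Z_\Delta^p(\x,n)$ and, moreover, preserves the subgroup $\mathcal{F}^\ell$. The map $\sigma_\ell$ is dominant but its preimages of faces of $\Delta^n$ are not all faces of $\Delta^{n+1}$ (the face $\{X_\ell=0\}$ pulls back to $\{X_\ell+X_{\ell+1}=0\}$), so $s_\ell \xi$ a priori need not meet every $\rho_i(\Delta^n)\subset\Delta^{n+1}$ properly even when $\xi$ meets every face of $\Delta^n$ properly. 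This is handled by the usual Bloch moving lemma in the simplicial setting: on the quasi-isomorphic subcomplex of representatives in sufficiently general position with respect to both the original faces and the preimages of faces under $\sigma_\ell$, the pullback $s_\ell$ is well-defined and lands in $Z_\Delta^p(\x,n+1)$. Once one has checked this, the homological-algebra argument sketched above applies verbatim and yields the lemma; an entirely analogous argument then proves Proposition \ref{ML2} by intersecting $\mathcal{F}^\ell$ with the ``real'' subcomplex and observing that $s_\ell$ preserves the proper-intersection conditions with $T_{-(X_1+\cdots+X_n)/X_0}$ and its partial intersections.
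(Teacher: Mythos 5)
Your proof is essentially identical to the paper's. The map you call $f=\mathrm{id}-s_\ell\partial_\ell$ is the paper's $\kappa_\ell$, and your homotopy $h=(-1)^\ell s_\ell$ is the paper's $T_\ell$; all the simplicial-identity manipulations match.

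The one place you diverge is the closing paragraph, where you flag the well-definedness of $s_\ell=(\mathrm{id}_\x\times\sigma_\ell)^*$ on $Z^p_\Delta(\x,n)$ as a delicate point to be handled by a moving lemma. The concern is reasonable to raise (the paper treats it as implicit), but a moving lemma is not needed here and would in fact be methodologically backwards, since well-definedness of the degeneracies is usually an \emph{input} to such moving arguments. The correct reason $s_\ell$ is well-defined is direct: $\sigma_\ell$ is flat, and its restriction to each facet $\{X_j=0\}$ of $\Delta^{n+1}$ is either an isomorphism onto $\Delta^n$ (when $j=\ell$ or $j=\ell+1$, reflecting $\partial_\ell s_\ell=\partial_{\ell+1}s_\ell=\mathrm{id}$) or realizes that facet as the full $\sigma_\ell$-preimage of a facet of $\Delta^n$ (when $j\neq\ell,\ell+1$). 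Iterating on multi-faces, every $\rho_I(\Delta^{n+1-|I|})$ is carried by $\sigma_\ell$ onto some $\rho_{I'}(\Delta^{n-|I'|})$ with either zero- or one-dimensional fibers, so $|s_\ell\xi|\cap(\x\times\rho_I\Delta)$ is the preimage of $|\xi|\cap(\x\times\rho_{I'}\Delta)$ under a flat map and proper intersection is preserved by a dimension count. No repositioning of $\xi$ is required. Your observation that the preimage of the face $\{X_\ell=0\}\subset\Delta^n$ is the non-face $\{X_\ell+X_{\ell+1}=0\}\subset\Delta^{n+1}$ is true but not relevant: admissibility of $s_\ell\xi$ is a condition about the faces of $\Delta^{n+1}$, not about the $\sigma_\ell$-preimages of faces of $\Delta^n$.

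Your final remark, that the argument extends to the $\RR$-decorated subcomplex once one checks $s_\ell$ preserves the proper-intersection conditions with the real chains $T_{-(X_1+\cdots+X_n)/X_0}\cap\cdots$, is exactly the route the paper takes for Propositions \ref{ML1} and \ref{ML2}.
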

\begin{proof}
Introduce\begin{equation}\label{EM} \kappa_{\ell} : \F^{\ell}\z^p(\x,\bullet)\to \F^{\ell+1}\z^p(\x,\bullet), \end{equation}by
the formula\begin{equation}\label{E665} \kappa_{\ell} (\xi) = \begin{cases} \xi&\text{if $\ell > n$}\\ \xi - s_{\ell}\del_{\ell}(\xi)&\text{if $\ell \leq n$} \end{cases} . \end{equation}We
claim that \eqref{EM} is a morphism of complexes.

To see this, first observe that $\kappa_{\ell}$ is the identity for
$\ell>n$, so it suffices to assume that $\ell\leq n$. Let $\xi\in\F^{\ell}\z^{p}(\x,n)$.
We must show that%
\footnote{Obviously both sides are zero if $\ell>n$.%
}
\[
\kappa_{\ell}\partial\xi=\partial\kappa_{\ell}\xi,
\]
i.e. that\begin{equation}\label{ID1}
\sum_{j=\ell}^{m}(-1)^j\big(\del_j\xi - s_{\ell}\del_{\ell}\del_j\xi\big) = \sum_{j=\ell+1}^{m}(-1)^j\big( \del_j\xi - \del_js_{\ell}\del_{\ell}\xi\big) . 
\end{equation}For $j\geq\ell+2$, we have
\[
s_{\ell}\del_{\ell}\del_{j}=s_{\ell}\del_{j-1}\del_{\ell}=\del_{j}s_{\ell}\del_{\ell}
\]
from \eqref{ID}. Thus with regard to \eqref{ID1}, we are reduced
to showing that\begin{multline*}
(-1)^{\ell}\big[\del_{\ell}\xi - s_{\ell}\del_{\ell}\del_{\ell}\xi\big] + (-1)^{\ell+1}\big[\del_{\ell+1}\xi - s_{\ell}\del_{\ell}\del_{\ell+1}\xi\big] =
\\ (-1)^{\ell+1}\big[\del_{\ell+1}\xi - \del_{\ell+1}s_{\ell}\del_{\ell}\xi\big].
\end{multline*}Using $\partial_{\ell+1}s_{\ell}=\text{Id}$, this is reduced to the
equation $\del_{\ell}\del_{\ell}-\del_{\ell}\del_{\ell+1}=0$, which
follows from \eqref{ID}. The claim is established.

Next observe that $\kappa_{\ell}\circ\lambda_{\ell}$ is the identity
on $\F^{\ell+1}\z^{p}(\x,\bullet)$. For $\xi\in\F^{\ell}\z^{p}(\x,n)$
we introduce the homotopy operator $T_{\ell}:\F^{\ell}\z^{p}(\x,n)\to\F^{\ell}\z^{p}(\x,n+1)$
by the formula\begin{equation*}
T_{\ell}(\xi) = \begin{cases} 0&\text{if $\ell > n$}\\ (-1)^{\ell}s_{\ell}(\xi)&\text{if $\ell \leq n$} \end{cases}.
\end{equation*}We will check that \begin{equation}\label{E666} \del T_{\ell}(\xi) + T_{\ell}\del(\xi) = \xi - (\lambda_{\ell}\circ \kappa_{\ell})(\xi), \end{equation}which
obviously implies that $\lambda_{\ell}\circ\kappa_{\ell}$ is homotopic
to the identity on $\F^{\ell}\z^{p}(\x,n)$. Firstly, from \eqref{E665},
the right-hand side of \eqref{E666} is given by \begin{equation}\label{E667} \xi - (\lambda_{\ell}\circ \kappa_{\ell})(\xi) =  s_{\ell}\del_{\ell}\xi . \end{equation}(Both
sides of \eqref{E667} are zero if $\ell>n$.) Next, the left-hand
side of \eqref{E666} is \begin{equation}\label{E668} (-1)^{\ell}[\del s_{\ell} + s_{\ell}\del](\xi). \end{equation}But
as $s_{\ell}:\z^{p}(\x,n)\to\z^{p}(\x,n+1)$, for $\ell\leq n$, using
\eqref{ID} (and $\xi\in\F^{\ell}$) gives \begin{multline*}
(-1)^{\ell}\del s_{\ell}(\xi) = (-1)^{\ell}\sum_{j=0}^{n+1}(-1)^j\del_js_{\ell}(\xi) =
\\  (-1)^{\ell}\sum_{j=0}^{\ell-1}(-1)^j\del_js_{\ell}(\xi)  +  [ \del_{\ell}-\del_{\ell+1}] s_{\ell}(\xi)  + (-1)^{\ell}\sum_{j=\ell+2}^{n+1}(-1)^j\del_js_{\ell}(\xi) =
\\
(-1)^{\ell}\sum_{j=\ell+2}^{n+1}(-1)^js_{\ell}\del_{j-1}(\xi) =
(-1)^{\ell}\sum_{j=\ell+1}^{n}(-1)^{j-1}s_{\ell}\del_j(\xi).
\end{multline*}Next, and again using $\xi\in\F^{\ell}$,
\[
(-1)^{\ell}s_{\ell}\del(\xi)=s_{\ell}\del_{\ell}(\xi)+(-1)^{\ell}\sum_{j=\ell+1}^{n}(-1)^{j}s_{\ell}\del_{j}(\xi).
\]
Then \eqref{E668} becomes $s_{\ell}\del_{\ell}(\xi)$, as required.
\end{proof}
Lemma \ref{lemma 5.1} has the following corollary. Let $\kappa:\z^{p}(\x,\bullet)\to\n^{p}(\x,\bullet)$
be the map of complexes defined by letting $\kappa^{(n)}:\z^{p}(\x,n)\to\n^{p}(\x,n)$
be the composite $\kappa_{n-1}\circ\kappa_{n-2}\circ\cdots\circ\kappa_{0}$.
Then the inclusion $\lambda:\n^{p}(\x,\bullet)\subset\z^{p}(\x,\bullet)$
induces an isomorphism on homology with inverse induced by $\kappa$;
moreover, $\kappa\circ\lambda$ is the identity on $\n^{p}(\x,\bullet)$.
So we get $\z^{p}(\x,\bullet)\cong\n^{p}(\x,\bullet)\oplus\ker\kappa$,
where
\[
\ker\kappa=D_{\Delta}^{p}(\x,\bullet):=\sum_{i=0}^{n-1}s_{i}\left(\z^{p}(\x,n-1)\right),
\]
and $D_{\Delta}^{p}(\x,\bullet)$ is acyclic.

Turning to the proofs of our moving lemmas, we consider the commutative
diagram\[ \xymatrix{
N^p_{\Delta,\RR}(\x,\bullet) \ar [r]^{i_1} \ar [d]_{i_2} & Z^p_{\Delta,\RR}(\x,\bullet) \ar [d]^{i_3} 
\\ 
\n^p(\x,\bullet) \ar [r]^{i_4} & \z^p(\x,\bullet) , } \]where $N_{\Delta,\RR}^{p}(\x,\bullet):=Z_{\Delta,\RR}^{p}(\x,\bullet)\cap\n^{p}(\x,\bullet).$
We have seen that $i_{4}$ is a quasi-isomorphism.

We claim that $i_{1}$ is a quasi-isomorphism. To see this, let
\[
\tau=\tau_{k,J}^{n+1}:=T_{\frac{X_{1}+\cdots+X_{n+1}}{-X_{0}}}\cap\cdots\cap T_{\frac{X_{k+1}+\cdots+X_{n+1}}{-X_{k}}}\cap\rho_{J}(\Delta^{n+1-|J|})
\]
be one of the real chains in $\Delta^{n+1}$. Then one checks that
$\sigma_{i}(\tau)\subset\Delta^{n}$ is contained in a $\tau_{k',J'}^{n}$
of the same real dimension (as $\sigma_{i}(\tau)$, not $\tau$). Reasoning
as in Step 2 of the Proof of Theorem \ref{AJ Thm}, we have that $s_{\ell}$
restricts to a map $Z_{\Delta,\RR}^{p}(\x,n)\to Z_{\Delta,\RR}^{p}(\x,n+1).$
By \eqref{E665}, it follows that $\kappa_{\ell}$ and $T_{\ell}$
also preserve ``subscript $\RR$'', so that the proof of Lemma \ref{lemma 5.1}
goes through with the real-intersection conditions, proving the claim.

It remains to show that $i_{2}$ is a quasi-isomorphism. The argument
in \cite[Appendix to 8.2]{KL} (cf. part (a)) proves exactly the same
thing in the cubical context. Replacing cubes with simplices and $\mathcal{T}^{n}$
by the iterated double $\mathcal{T}_{\Delta}^{n}:=D\left(\Delta_{\x}^{n};\rho_{0}(\Delta_{\x}^{n-1}),\ldots,\rho_{n}(\Delta_{\x}^{n-1})\right)$,%
\footnote{This is a singular variety (resembling the union of facets of a polytope)
with irreducible components all isomorphic to $\Delta_{\x}^{n}$,
and indexed by subsets of $\{0,\ldots,n\}$.%
} the same proof (using ideas of Levine \cite{Lv}) goes through \emph{mutatis
mutandis}. To give a flavor of the proof, we summarize the steps for
showing $i_{2}$ is ``quasi-surjective''. The idea is that any normalized
cycle $\mathfrak{Z}\in\ker(\del)\subset\n^{p}(\x,n)$ can, up to $\del\n^{p}(\x,n+1)$,
be described as the alternating pullback of a cycle on $\mathcal{T}_{\Delta}^{n}$.
This cycle in turn may be obtained by intersecting with a cycle $\mathcal{W}$
on a homogeneous space for $GL_{n}(K)$, where $K$ is the field of
definition of $\x$. Applying $g^{*}$ ($g\in GL_{n}(L)$, $L\supset K$)
to $\mathcal{W}$ and pulling back to $\x$ yields a cycle $\mathfrak{Z}'\in\n^{p}(\x_{L},n)$
which still only differs from $\mathfrak{Z}$ by an element of $\del\n^{p}(\x_{L},n+1)$.
By a variant of Kleiman transversality (cf. \cite{Lv}), one may choose
$g$ so that $\mathfrak{Z}'\in N_{\Delta,\RR}^{p}(\x_{L},n)$; a norm
argument then produces $\mathfrak{Z}''\in N_{\Delta,\RR}^{p}(\x,n)$
in the same class as $\mathfrak{Z}$.

\subsection*{Appendix II to $\S3$: verification that $R_{\mathfrak{Z}}^{\Delta}\in D^{2p-n-1}(\x)$}

We consider progressively more general cases, with $\mathfrak{Z}\subset\x\times\PP^{n}$
always irreducible and giving an element of $Z_{\Delta,\RR}^{p}(\x,n)$:

$\underline{\text{Case 1}}$: \emph{$p=n$, with $\overline{\pi_{\x}(\mathfrak{Z})}=\x$
and $\mathfrak{Z}$ generically of degree $1$ over $\x$.} Writing
\[
\underline{f}=(f_{1},\ldots,f_{n}):=\mathfrak{Z}^{*}\left(-\frac{X_{1}+\cdots+X_{n}}{X_{0}},\ldots,-\frac{X_{n}}{X_{n-1}}\right),
\]
we define subvarieties 
\[
H_{\underline{f}}:=\left|\mathfrak{Z}^{*}\left((X_{0}+\cdots+X_{n})\right)\right|,
\]
 
\[
\y_{\underline{f}}:=\bigcup_{j=1}^{n}\left|(1-f_{j})_{0}\right|,\text{ and }D_{\underline{f}}:=\bigcup_{j=1}^{n}\left|(f_{j})\right|
\]
of $\x$. Let $\omega\in A^{2\dim(\x)-n+1}(\x)$ be a $C^{\infty}$
test form; we must show that \begin{equation}\label{eqn converge}\int_{\x} R(\underline{F})\wedge \omega := \lim_{\epsilon\to 0}\int_{\x\setminus \mathcal{N}_{\epsilon}(D_{\underline{f}})} R(\underline{f})\wedge \omega
\end{equation} is finite (where $\mathcal{N}_{\epsilon}(\cdot)$ denotes a small
tubular neighborhood). Write $\mathcal{E}_{\underline{f},\omega}$
for the union of irreducible components $W$ of $D_{\underline{f}}$
along which every term of $R(\underline{f})\wedge\omega$ has a factor
of $dw$, $d\bar{w}$, $w$, or $\bar{w}$, where $w$ is an algebraic
(and locally holomorphic) function with $W$ in its zero-set. More
precisely, if $J_{W}:=\left\{ j\in\{1,\ldots,n\}\left||(f_{j})|\supset W\right.\right\} =\{j_{1},\ldots,j_{k}\},$
then $R_{\underline{f}}\wedge\omega$ breaks into terms $\log f_{j_{\ell}}\text{dlog}f_{j_{\ell+1}}\wedge\cdots\wedge\text{dlog}f_{j_{k}}\cdot\delta_{T_{f_{j_{1}}}\cap\cdots\cap T_{f_{j_{\ell-1}}}}$
with $\alpha$ a monomial $C^{\infty}$ $(\dim(\x)-k+1)$-form in
coordinates $\{z_{1}=w,\ldots,z_{n}\}$, and we require that $\alpha$
contain a $w$, $\bar{w}$, $dw$, or $d\bar{w}$.

First assume that $D_{\underline{f}}$ is a normal crossing divisor.
In that event, it will suffice to bound \eqref{eqn converge} in a
neighborhood of a general point of each irreducible component of $D_{\underline{f}}$,
since the bounds near intersection (higher codimension) points will
break into products of codimension-$1$ bounds. The only possibilities
for nonconvergence along $W$ are terms of the form \begin{equation}\label{eqn nonconverge}\int_{T_{w^a}}\log(w^b) \text{dlog}(w^c) \wedge C^{\infty} \;\;\text{and} \;\; \int_{T_{w^a}}\text{dlog}(w^b)\wedge C^{\infty}.
\end{equation} where without loss of generality one can take the integers $a,b,c$
to be $1$. Evidently the presence of a $dw$, $d\bar{w}$, $w$,
or $\bar{w}$ in each monomial term of the $C^{\infty}$ expression
makes \eqref{eqn nonconverge} converge, so that we only need to worry
about $W\nsubseteq\mathcal{E}_{\underline{f},\omega}$. But $H_{\underline{f}}\subset\y_{\underline{f}}\subset\mathcal{E}_{\underline{f},\omega}$,
and $\y_{\underline{f}}$ also contains every $W$ along which the
numerator of $f_{1},\, f_{2},\,\ldots,$ or $f_{n-1}$ vanishes, while
outside $H_{\underline{f}}$ only one of $X_{0},\ldots,X_{n}$ can
vanish in codimension one. Consequently, each component of $D_{\underline{f}}\backslash\mathcal{E}_{\underline{f},\omega}$
can only be contained in one $|(f_{j})|$ and \eqref{eqn nonconverge}
cannot occur.

If $D_{\underline{f}}$ does not have normal crossings, consider an
embedded resolution\[ \xymatrix{\widetilde{\x} \ar @{->>} [r]^{\beta} & \x \\ 
\widetilde{D_{\underline{f}}}\cup E_{\beta} \ar @{^(->} [u] \ar @{->>} [r] & D_{\underline{f}} \ar @{^(->} [u]} \]where $\widetilde{D_{\underline{f}}}$ is the proper transform and
$E_{\beta}$ the exceptional divisor (with union a NCD). By a simple
computation, $\mathcal{E}_{\beta^{*}\underline{f},\beta^{*}\omega}\supset\beta^{-1}(\mathcal{E}_{\underline{f},\omega})$
and we only need to consider components $W\underset{\text{loc}}{=}\{w'=0\}$
of $E_{\beta}$ in the preimage of $\x\backslash\y_{\underline{f}}$.
But then by the proper intersection conditions on $\mathfrak{Z}$,
$|J_{W'}|$ is bounded by $c:=\text{codim}_{\x}(\beta(W'))$. In particular,
if $w_{1}=\cdots=w_{c}=0$ locally cuts out $\beta(W')$, we have
in each term of $R(\underline{f})\wedge\omega$ a $dw_{i}$, $d\bar{w}_{i}$,
$w_{i}$, or $\bar{w}_{i}$ factor ($i\in\{1,\ldots,c\}$), hence
in each term of $R(\beta^{*}\underline{f})\wedge\beta^{*}\omega$
a $dw'$, $d\bar{w}'$, $w'$, or $\bar{w}'$ factor. Conclude that
$W'$ hence $E_{\beta}$ is contained in $\mathcal{E}_{\beta^{*}\underline{f},\beta^{*}\omega}$,
proving convergence.

$\underline{\text{Case 2}}$: \emph{Remove the degree-$1$ assumption
(so $\mathfrak{Z}$ is simply finite over $\x$).} The above argument
goes through for the branches of $\mathfrak{Z}$, when one considers
that the expressions in \eqref{eqn nonconverge} are not essentially
different if we take $a,b,c\in\QQ$, and that codimension in $\mathfrak{Z}$
is codimension in $\x$.

$\underline{\text{Case 3}}$:\emph{ $p>n$ and $\mathfrak{Z}$ generically
finite over a subvariety $V$ of $\x$}. At first glance, one has
to worry about the failure of proper intersection conditions for the
base-change of $\mathfrak{Z}$ under a desingularization $\tilde{V}\twoheadrightarrow V$.
(Otherwise, we are reduced to Case 2.) But as in the end of Case 1,
away from the sets $\mathfrak{Z}\cap(V\times\{X_{j}+\cdots+X_{n}=0\})$
($j=0,\ldots,n-1$), the number of singular $\delta_{T}$ or $\text{dlog}$
factors is bounded by the codimension of the corresponding subvariety
of $\mathfrak{Z}$ (hence $V$), and then a similar argument holds.

$\underline{\text{Case 4}}$: \emph{general case.} Working locally,
there is a finite projection of $\mathfrak{Z}$ to $V\times\PP^{k}$
for some $k<n$, and we are done by Case 3.

\section{Milnor reciprocity laws}

The telescoping property (Prop. \ref{prop telescope}) of the simplicial
currents $R_{n}^{\Delta},S_{n}^{\Delta}$ makes them particularly
suitable for the study of reciprocity laws arising from subvarieties
of projective space. We shall begin, however, from a more general
and ``intrinsic'' perspective, which is independent of the choice
of simplicial vs. cubical.

Let $\x$ be a smooth complete curve over $\CC$, and $f,g\in\CC(\x)^{*}$.
Writing \begin{equation}\label{eqn Tame}\left\{ \begin{array}{c}{\text{Tame}}_{p}:\, K_{2}^{M}(\CC(\x))\to K_{1}^{M}(\CC)\cong\CC^{*}\\\{f,g\}\mapsto\lim_{x\to p}(-1)^{\nu_{p}(f)\nu_{p}(g)}\frac{f(x)^{\nu_{p}(g)}}{g(x)^{\nu_{p}(f)}}\end{array}\right.
\end{equation}for $p\in\x(\CC)$, \emph{Weil reciprocity} states that the (finite)
product
\[
\prod_{p\in\x(\CC)}\text{Tame}_{p}\{f,g\}=1.
\]
This result gives rise to several other reciprocity laws in higher
dimension. For example, Parshin {[}resp. bilocal{]} reciprocity (cf.
\cite{Ho}) on an algebraic surface $\mathcal{S}$ is obtained by
\emph{applying} Weil recirocity on a curve $\x\subset\mathcal{S}$
to $\text{Tame}_{\x}\xi$ {[}resp. $\{\text{Tame}_{\x}\mu,\text{Tame}_{\x}\eta\}${]}
for $\xi\in K_{3}^{M}(\CC(\mathcal{S}))$ {[}resp. $\mu\otimes\eta\in K_{2}^{M}(\CC(\mathcal{S}))^{\otimes2}${]}.
Suslin reciprocity (cf. \cite{Ke3}) generalizes Weil to higher $K$-theory,
replacing \eqref{eqn Tame} by $\text{Tame}_{p}:\, K_{3}^{M}(\CC(X))\to K_{2}^{M}(\CC)$.

The generalizations we pursue here take a different direction, and
begin from the
\begin{prop}
Let $D=\{p_{1},\ldots,p_{r}\}\subset\x$ and $\x^{*}:=\x\backslash D$;
then for each $p\geq2$, the composition 
\[
CH^{p}(\x^{*},2p-2)\overset{\oplus_{\alpha}Res_{p_{\alpha}}}{\longrightarrow}\oplus_{\alpha}CH^{p-1}(\CC,2p-3)\overset{AJ}{\longrightarrow}\oplus_{\alpha}\CC/\ZZ(p-1)
\]
has image in the kernel of the augmentation map $\oplus_{\alpha}\CC/\ZZ(p-1)\overset{\sum}{\to}\CC/\ZZ(p-1).$
\end{prop}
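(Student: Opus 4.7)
The plan is to derive the reciprocity from Bloch's localization sequence, applied to the pair $(\x,D)$, combined with the functoriality of the Abel--Jacobi map under closed immersions. Writing $\iota_\alpha:\{p_\alpha\}\hookrightarrow\x$ for each closed point, the relevant portion of the localization sequence is
\[
CH^p(\x,2p-2)\to CH^p(\x^*,2p-2)\xrightarrow{\oplus_\alpha\mathrm{Res}_{p_\alpha}}\oplus_\alpha CH^{p-1}(\CC,2p-3)\xrightarrow{\sum_\alpha\iota_{\alpha,*}}CH^p(\x,2p-3),
\]
whose connecting map is the direct sum of residues. By exactness, $\sum_\alpha\iota_{\alpha,*}\mathrm{Res}_{p_\alpha}(\xi)=0$ in $CH^p(\x,2p-3)$ for every $\xi$, and the proposition follows by applying $AJ_\x$ and tracking the result through the Deligne cohomology of $\x$.

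For this I would combine two inputs. First, functoriality of $AJ$ for closed immersions: for each $\alpha$, the square
\[
\xymatrix{
CH^{p-1}(\CC,2p-3)\ar[r]^-{AJ}\ar[d]_{\iota_{\alpha,*}} & \CC/\ZZ(p-1)\ar[d]^{\iota_{\alpha,*}^{\mathcal{D}}} \\
CH^p(\x,2p-3)\ar[r]^-{AJ_\x} & H^3_{\mathcal{D}}(\x,\QQ(p))
}
\]
commutes (on the point no moving lemma is needed, so $AJ$ on the top is integrally defined). The cleanest justification is via Theorem \ref{AJ Thm}: Bloch's cycle-class map (Definition \ref{Def BCCM}) is intrinsically covariant for proper pushforward at the level of Yoneda extensions of mixed Hodge structures, and the theorem transports this covariance to the simplicial $AJ$. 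Second, the identification of the pushforwards: for $\x$ a smooth complete curve and $p\geq 2$, the vanishing $F^pH^2(\x,\CC)=0$ yields $H^3_{\mathcal{D}}(\x,\ZZ(p))\cong\CC/\ZZ(p)$ via cup with the fundamental class, and every integral Gysin $\iota_{\alpha,*}^{\mathcal{D}}$ is the isomorphism $\cdot 2\pi i:\CC/\ZZ(p-1)\xrightarrow{\sim}\CC/\ZZ(p)$---the \emph{same} map for every $\alpha$, since all points on a connected curve represent the same fundamental class in $H^2$. Chasing the square together with the exactness gives $\iota_*^{\mathcal{D}}\bigl(\sum_\alpha AJ(\mathrm{Res}_{p_\alpha}\xi)\bigr)=0$, and injectivity of $\iota_*^{\mathcal{D}}$ yields $\sum_\alpha AJ(\mathrm{Res}_{p_\alpha}\xi)=0$ in $\CC/\ZZ(p-1)$.

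The main obstacle is reconciling the rational formulation of $AJ_\x$ in this paper (inherited from Proposition \ref{ML1}) with the asserted integral $\CC/\ZZ(p-1)$ conclusion, since the purely rational version of the argument places the sum only in the kernel $\QQ(p-1)/\ZZ(p-1)$ of $\CC/\ZZ(p-1)\to\CC/\QQ(p-1)$. The resolution is to observe that for cycles in the image of $\iota_{\alpha,*}$---supported over finitely many points---no moving is needed and the Yoneda-extension construction of Definition \ref{Def BCCM} lifts integrally, producing an integral refinement of the commutative square on that image. Justifying this integral enhancement is the technical heart of the proof. An appealing alternative is a direct argument via Stokes on the curve, applied to the telescoping identity \eqref{eqn dR} for the $1$-current $R^\Delta_{\bar{\mathfrak{Z}}}$ attached to a good extension $\bar{\mathfrak{Z}}\in Z^p_{\Delta,\RR}(\x,2p-2)$ of a representative $\mathfrak{Z}$ of $\xi$: the $(p,p-1)$-form $\Omega^\Delta_{\bar{\mathfrak{Z}}}$ vanishes for $p\geq 2$ on a curve, and pairing with the constant function $1$ produces $(2\pi i)^{2p-2}\deg(T^\Delta_{\bar{\mathfrak{Z}}})=2\pi i\sum_\alpha R^\Delta_{\eta_\alpha}$ (since $\partial_\mathcal{B}\bar{\mathfrak{Z}}$ is concentrated over $D$ with fibers the residue cycles $\eta_\alpha$), which after the normalization of \eqref{AJdef} is precisely the claimed vanishing in $\CC/\ZZ(p-1)$.
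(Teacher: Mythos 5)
Your proof takes essentially the same approach as the paper's, which simply states that the result follows "from the localization sequence and its compatibility with the $AJ$ map, or using Reciprocity Law A below"---exactly the two routes you spell out (the functoriality/Gysin chase and the direct Stokes argument on the telescoping identity). Your version supplies the details the paper omits, including the useful observation that the Gysin maps for all points of a connected curve coincide and the subtlety about $\ZZ$-- versus $\QQ$--coefficients.
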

This is easily proved from the localization sequence and its compatibility
with the $AJ$ map, or using Reciprocity Law A below. The case $p=2$
is Weil reciprocity, while $p=3$ {[}resp. $4,\,\ldots\,${]} is related
to the dilogarithm {[}resp. trilogarithm, $\ldots\,${]} at algebraic
arguments and more generally special values of $L$-functions. So
for polylogarithmic functional equations with \emph{variable }arguments,
this is not the way to go.

At the next stage of generalization, where $\x/\CC$ is any smooth
projective variety, we encounter an unpleasant reality when $\dim\x=:d>1$.
Consider a codimension-one subvariety $D\subset\x$ with irreducible
components $\{D_{\alpha}\}$ and smooth locus $\cup D_{\alpha}^{*}$,
and write $\x^{*}:=\x\backslash D$. Taking $p>d$, for any $\alpha$
the composition\small  
\[
CH^{p}(\x^{*},2(p-d))\overset{Res_{\alpha}}{\longrightarrow}CH^{p-1}(D_{\alpha}^{*},2(p-d)-1)\overset{AJ}{\longrightarrow}H^{2(d-1)}(D_{\alpha}^{*},\CC/\ZZ(p-1))
\]
\normalsize is zero unless $D_{\alpha}^{*}=D_{\alpha}$, so that
integrating the image current does not give a well-defined number
in $\CC/\ZZ(p-1)$. So we are forced to work on the level of precycles,
which yields
\begin{prop}
\label{prop 4.2}Let $p>d\geq1$, $n:=2(p-d)$, and%
\footnote{The parentheses $(\Delta)$ mean that we may work in either the simplicial
or the cubical setting.%
} $\mathfrak{Z}\in Z_{\RR(,\Delta)}^{p}(\x,n)$ be a precycle with
$\partial_{B}\mathfrak{Z}$ supported on $D$. Then writing $\partial_{B}\mathfrak{Z}=:\sum\imath_{*}^{D_{\alpha}}Res_{\alpha}\mathfrak{Z}$
\emph{(}with $Res_{\alpha}\mathfrak{Z}\in Z_{\RR(,\Delta)}^{p-1}(D_{\alpha},n-1)$\emph{)},
we have
\[
\sum_{\alpha}\int_{D_{\alpha}}R_{Res_{\alpha}\mathfrak{Z}}^{(\Delta)}\equiv0\;\;\;\text{mod }\ZZ(n-1).
\]
\end{prop}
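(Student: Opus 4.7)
The plan is to integrate the residue formula of \eqref{eqnResFormula} (cubical) or \eqref{eqn dR} (simplicial) over the compact variety $\x$, apply Stokes' theorem, and exploit a Hodge-type vanishing together with the integrality of the top-dimensional chain $T_{\mathfrak{Z}}^{(\Delta)}$ to isolate the desired residue modulo $\ZZ(n-1)$.

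First, since $R_{\mathfrak{Z}}^{(\Delta)}\in\cD^{2p-n-1}(\x)=\cD^{2d-1}(\x)$, the residue formula reads
\[
d[R_{\mathfrak{Z}}^{(\Delta)}] \;=\; \Omega_{\mathfrak{Z}}^{(\Delta)} \,-\, (2\pi i)^{n}\delta_{T_{\mathfrak{Z}}^{(\Delta)}} \,\pm\, 2\pi i\, R_{\partial_{\mathcal{B}}\mathfrak{Z}}^{(\Delta)},
\]
an equality of $2d$-currents on $\x$ (sign depending on the simplicial vs.\ cubical convention). Pairing both sides with the constant test function $\mathbf{1}\in A^{0}(\x)$ and using Stokes on the compact $\x$ gives $\langle d[R_{\mathfrak{Z}}^{(\Delta)}],\mathbf{1}\rangle=\langle R_{\mathfrak{Z}}^{(\Delta)},d\mathbf{1}\rangle=0$, whence
\[
0 \;=\; \int_{\x}\Omega_{\mathfrak{Z}}^{(\Delta)} \,-\, (2\pi i)^{n}\!\int_{\x}\delta_{T_{\mathfrak{Z}}^{(\Delta)}} \,\pm\, 2\pi i\!\int_{\x}R_{\partial_{\mathcal{B}}\mathfrak{Z}}^{(\Delta)}.
\]

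Now I analyze each term. The Hodge-theoretic input is that $\Omega_{\mathfrak{Z}}^{(\Delta)}\in F^{p}\cD^{2d}(\x)$, so its nonzero Hodge components $(a,2d-a)$ satisfy $a\ge p>d=\dim_{\CC}\x$; but $a\le d$ for any $(a,b)$-current on $\x$, so $\Omega_{\mathfrak{Z}}^{(\Delta)}=0$. Next, $T_{\mathfrak{Z}}^{(\Delta)}$ is a $0$-chain on $\x$ (dimension $n+2(d+n-p)-(2d+n)-n+n=0$ by the proper intersection hypothesis), and its degree $\int_{\x}\delta_{T_{\mathfrak{Z}}^{(\Delta)}}=\deg T_{\mathfrak{Z}}^{(\Delta)}$ lies in $\ZZ$ because the underlying intersections of irreducible subvarieties carry integral multiplicities. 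Finally, the hypothesis $\partial_{\mathcal{B}}\mathfrak{Z}=\sum_{\alpha}\imath_{*}^{D_{\alpha}}\Res_{\alpha}\mathfrak{Z}$ gives, by the projection formula,
\[
\int_{\x}R_{\partial_{\mathcal{B}}\mathfrak{Z}}^{(\Delta)} \;=\; \sum_{\alpha}\int_{D_{\alpha}}R_{\Res_{\alpha}\mathfrak{Z}}^{(\Delta)}.
\]
Substituting and dividing by $\pm 2\pi i$ then yields
\[
\sum_{\alpha}\int_{D_{\alpha}}R_{\Res_{\alpha}\mathfrak{Z}}^{(\Delta)} \;=\; \pm\,(2\pi i)^{n-1}\deg T_{\mathfrak{Z}}^{(\Delta)} \;\in\; \ZZ(n-1),
\]
as desired.

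The main obstacle, and the step deserving the most care, is justifying that the pairing of the singular $(2d-1)$-current $R_{\mathfrak{Z}}^{(\Delta)}$ with $d\mathbf{1}$ actually vanishes, i.e.\ that Stokes applies. This is where the good-precycle hypothesis $\mathfrak{Z}\in Z^{p}_{\RR(,\Delta)}(\x,n)$ is essential: the convergence/boundary analysis of Appendix II to \S3 (together with its cubical counterpart in \cite{KLM}) shows that $R_{\mathfrak{Z}}^{(\Delta)}$ is a genuine current on $\x$ whose exterior derivative is computed by \eqref{eqnResFormula}/\eqref{eqn dR}, so that no boundary contributions are lost. A secondary point to verify is the integrality $\deg T_{\mathfrak{Z}}^{(\Delta)}\in\ZZ$ despite working with $\QQ$-coefficients: this holds because $T_{\mathfrak{Z}}^{(\Delta)}$ is a $\QQ$-linear combination of intersections of integral subvarieties with the closed real torus $T_{n}^{(\Delta)}$, and after pushforward to $\x$ the degree counts points weighted by the integer intersection multiplicities (the $\QQ$-coefficients of $\mathfrak{Z}$ scale the final expression in a way compatible with the mod $\ZZ(n-1)$ statement, as the residues $\Res_{\alpha}\mathfrak{Z}$ inherit the same scaling).
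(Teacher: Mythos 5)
Your proof is correct and follows essentially the same route as the paper's: kill $\Omega^{(\Delta)}_{\mathfrak{Z}}$ by the Hodge-type bound $F^p\cD^{2d}(\x)=0$ for $p>d$, invoke the residue formula \eqref{eqnResFormula}/\eqref{eqn dR}, and apply Stokes on the compact $\x$ so that the $(2\pi i)^n\delta_{T^{(\Delta)}_{\mathfrak{Z}}}$ term contributes only an element of $\ZZ(n)$, leaving the residue sum in $\ZZ(n-1)$. The one small slip is the intermediate arithmetic in your dimension count for $T^{(\Delta)}_{\mathfrak{Z}}$ (the displayed expression simplifies to $2n-2p$, not $0$), though the correct computation $\dim_{\RR}\mathfrak{Z}-\operatorname{codim}_{\RR}T_n^{(\Delta)}=2(d+n-p)-n=0$ for $n=2(p-d)$ does give a $0$-chain as you assert.
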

\begin{proof}
Note that $R_{Res_{\alpha}\mathfrak{Z}}^{(\Delta)}$ is a current
of top degree $2(p-1)-(n-1)-1=2(d-1)$ on $D_{\alpha}$. Since $p>d$,
$F^{p}D^{2d}(\x)=\{0\}$ and $\Omega_{\mathfrak{Z}}^{(\Delta)}=0$.
So \eqref{eqn dR} becomes
\[
d[R_{\mathfrak{Z}}^{(\Delta)}]=-(2\pi i)^{n}\delta_{T_{\mathfrak{Z}}^{(\Delta)}}+2\pi i\sum_{\alpha}\imath_{*}^{D_{\alpha}}R_{Res_{\alpha}\mathfrak{Z}}^{(\Delta)},
\]
from which the result follows by Stokes's theorem.
\end{proof}
Restricting to the case $n=p=2d$, suppose $F_{0},\ldots,F_{n}\in\Gamma(\x,\mathcal{O}_{\x}(k))$
is an $n$-tuple of homogeneous functions such that 
\[
\Gamma_{\underline{F}}:=\left\{ \left.\left(x,[F_{0}(x):\cdots:F_{n}(x)]\right)\right|x\in\x(\CC)\right\} \in Z_{\RR,\Delta}^{n}(\x,n).
\]
Writing $\sum m_{ij}D_{ij}:=(F_{i})$, one obtains 
\[
\partial_{B}\Gamma_{\underline{F}}^{\Delta}=\sum_{i=0}^{n}(-1)^{i}\sum_{j}m_{ij}\imath_{*}^{D_{ij}}\Gamma_{[F_{0}:\cdots:\widehat{F_{i}}:\cdots:F_{n}]}^{\Delta},
\]
which together with Proposition \ref{prop 4.2} gives the 
\begin{cor}
We have 
\[
\sum_{i=0}^{n}(-1)^{i}\sum_{j}m_{ij}\int_{D_{ij}^{*}}R^{\Delta}(X_{0}:\cdots:\widehat{X_{i}}:\cdots:X_{n})\underset{\ZZ(n-1)}{\equiv}0.
\]

\end{cor}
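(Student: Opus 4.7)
The plan is to apply Proposition \ref{prop 4.2} directly to the precycle $\mathfrak{Z}:=\Gamma_{\underline{F}}^{\Delta}$, using the boundary formula stated in the paragraph preceding the Corollary.

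First I would verify the displayed identity
\[
\partial_{B}\Gamma_{\underline{F}}^{\Delta}=\sum_{i=0}^{n}(-1)^{i}\sum_{j}m_{ij}\,\imath_{*}^{D_{ij}}\Gamma_{[F_{0}:\cdots:\widehat{F_{i}}:\cdots:F_{n}]}^{\Delta}.
\]
Unwinding $\partial_{B}=\sum(-1)^{i}\rho_{i}^{*}$, the pullback $\rho_{i}^{*}\mathfrak{Z}$ is the scheme-theoretic intersection of the graph with $\x\times\{X_{i}=0\}$, which is supported exactly over the divisor $(F_{i})=\sum_{j}m_{ij}D_{ij}$. On the generic stratum $D_{ij}^{*}$ the restriction is the graph of $[F_{0}:\cdots:\widehat{F_{i}}:\cdots:F_{n}]$, counted with the algebraic multiplicity $m_{ij}$ coming from the order of vanishing of $F_{i}$ along $D_{ij}$. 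The real-position hypothesis on $\Gamma_{\underline{F}}^{\Delta}\in Z_{\RR,\Delta}^{n}(\x,n)$ ensures that each such residue lies in $Z_{\RR,\Delta}^{n-1}(D_{ij},n-1)$, so Proposition \ref{prop 4.2} will apply summand-by-summand.

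Next I would check the numerical hypotheses of Proposition \ref{prop 4.2}: with $p=n=2d$ we have $p>d$ (since $d\geq 1$) and $n=2(p-d)$, as required. Invoking the Proposition then gives
\[
\sum_{i=0}^{n}(-1)^{i}\sum_{j}m_{ij}\int_{D_{ij}}R^{\Delta}_{\imath_{*}^{D_{ij}}\Gamma_{[F_{0}:\cdots:\widehat{F_{i}}:\cdots:F_{n}]}^{\Delta}}\;\equiv\;0\pmod{\ZZ(n-1)}.
\]
Finally, using the definition $R^{\Delta}_{\mathfrak{W}}=(\pi_{\x})_{*}(\pi_{\Delta})^{*}R_{n-1}^{\Delta}$ on a graph precycle, the integral over $D_{ij}$ of $R^{\Delta}_{\imath_{*}^{D_{ij}}\Gamma_{[F_{0}:\cdots:\widehat{F_{i}}:\cdots:F_{n}]}^{\Delta}}$ equals the integral over $D_{ij}^{*}$ of the pullback of the universal current $R_{n-1}^{\Delta}(X_{0}:\cdots:\widehat{X_{i}}:\cdots:X_{n})$ along the map $[F_{0}:\cdots:\widehat{F_{i}}:\cdots:F_{n}]$; this is exactly the shorthand appearing in the Corollary, and the conclusion follows.

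The only point I expect to require real care is checking that the residue precycles are in fact ``good'' (in $Z_{\RR,\Delta}^{n-1}(D_{ij},n-1)$), since $D_{ij}$ may be singular or may intersect other $D_{i'j'}$; but these failures occur in codimension at least one on $D_{ij}$, are absorbed by the smooth-locus condition recorded by the $D_{ij}^{*}$, and do not affect the integrals. The passage from algebraic multiplicity in $\partial_{B}$ to the integer coefficient $m_{ij}$ in front of each integral is the only book-keeping to be done carefully, and it is handled automatically by the definition of pullback of precycles.
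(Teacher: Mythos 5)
Your proposal matches the paper's own argument: the paper derives the Corollary by computing $\partial_{B}\Gamma_{\underline{F}}^{\Delta}=\sum_{i}(-1)^{i}\sum_{j}m_{ij}\,\imath_{*}^{D_{ij}}\Gamma_{[F_{0}:\cdots:\widehat{F_{i}}:\cdots:F_{n}]}^{\Delta}$ and then invoking Proposition \ref{prop 4.2}, exactly as you do. You simply make explicit the boundary calculation, the hypothesis check $p=n=2d>d$, and the unwinding of $R^{\Delta}_{Res_{\alpha}\mathfrak{Z}}$ as the pullback current along $[F_{0}:\cdots:\widehat{F_{i}}:\cdots:F_{n}]$, all of which the paper leaves implicit.
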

We leave to the reader the obvious analogue for the cubical Milnor
regulator currents $R(f_{1},\ldots,\widehat{f_{i}},\ldots,f_{n})$.
Note that the $n=2$ case of this is Weil reciprocity for functions
$f_{1},f_{2}\in\CC(\x)^{*}$ with $|(f_{1})|\cap|(f_{2})|=\emptyset$.

The Corollary has a natural ``extrinsic'' analogue for algebraic
cycles in even-dimensional projective space. We lose no generality
by stating this result, which is our first main point, for subvarieties. 
\begin{defn}
\label{defn general pos}We shall say that a subvariety of $\PP^{M}$
is in \emph{general position} if it properly intersects all chains
of the form $\imath_{*}^{J}(T_{-\frac{X_{1}}{X_{0}}}\cap\cdots\cap T_{-\frac{X_{k}}{X_{k-1}}})$
where $\imath^{J}:\PP^{M-|J|}\hookrightarrow\PP^{M}$ sends $[Z_{0}:\cdots:Z_{M-|J|}]$
to the projective $(M+1)$-tuple obtained by inserting zeroes at the
positions $j_{1},\ldots,j_{|J|}$.\end{defn}
\begin{thm}
\textbf{\emph{\label{thm rec law A}(Reciprocity Law A)}} Let $\mathfrak{R}_{m}$
stand for $R_{m}^{\Delta}$ or $S_{m}^{\Delta}$, and $\x\subset\PP^{2d}$
be an irreducible subvariety of dimension $d$, with $\y_{i}:=\x\cdot(X_{i})$
for $i=0,\ldots,2d$. Assuming that $\x$ is in general position,
we have
\[
\sum_{j=0}^{2d}(-1)^{j}\int_{\y_{_{j}}^{*}}\frac{1}{(2\pi i)^{d-1}}\mathfrak{R}_{2d-1}\left(X_{0}:\cdots:\widehat{X_{j}}:\cdots:X_{2d}\right)\underset{\ZZ(d)}{\equiv}0.
\]
\end{thm}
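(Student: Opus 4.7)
The plan is to apply the telescoping formula of Proposition \ref{prop telescope} for $\mathfrak{R}_{2d}$ on $\PP^{2d}$, restrict it along $\iota : \x \hookrightarrow \PP^{2d}$ (passing to a desingularization of $\x$ if necessary), and integrate over $\x$. Since $\x$ is a compact $d$-dimensional projective variety (with smooth model) without boundary, Stokes's theorem yields $\int_{\x} d[\iota^{*}\mathfrak{R}_{2d}] = 0$, and the three terms of the telescoping formula will account for exactly the three pieces needed: the $\Omega_{2d}^{\Delta}$-term vanishes by Hodge type, the $\delta_{T_{2d}^{\Delta}}$-term contributes an integer, and the pushforward terms become the integrals over the $\y_{j}^{*}$.

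Concretely, the general position assumption (Definition \ref{defn general pos}) guarantees that $\x$ meets each of the singular loci of $\mathfrak{R}_{2d}$, as well as $T_{2d}^{\Delta}$ and each hyperplane $\{X_{j}=0\}$, properly; by essentially the same analysis as in the proof of Lemma \ref{lemma 2.1} and Appendix II to $\S 3$, $\iota^{*}\mathfrak{R}_{2d}$ is a genuine current on $\x$, $d[\iota^{*}\mathfrak{R}_{2d}] = \iota^{*}d[\mathfrak{R}_{2d}]$, and Stokes applies. Substituting the telescoping formula gives
\[
0 \;=\; \int_{\x} \iota^{*}\Omega_{2d}^{\Delta} \;-\; (2\pi i)^{2d}\!\int_{\x}\iota^{*}\delta_{T_{2d}^{\Delta}} \;+\; 2\pi i \sum_{j=0}^{2d}(-1)^{j}\!\int_{\x} \iota^{*}(\rho_{j})_{*}\mathfrak{R}_{2d-1}.
\]
The first integral is zero because $\Omega_{2d}^{\Delta}$ has Hodge type $(2d,0)$ and pulls back to zero on the $d$-dimensional complex variety $\x$. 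The second integral equals the signed cardinality $N \in \ZZ$ of the (generically zero-dimensional) intersection $\x \cap T_{2d}^{\Delta}$. For the third, the projection formula applied to the cartesian square
\[
\xymatrix{ \y_{j} \ar @{^(->}[r] \ar[d] & \PP^{2d-1} \ar @{^(->}[d]^{\rho_{j}} \\ \x \ar @{^(->}[r]^{\iota} & \PP^{2d} }
\]
identifies $\int_{\x}\iota^{*}(\rho_{j})_{*}\mathfrak{R}_{2d-1}$ with $\int_{\y_{j}^{*}}\mathfrak{R}_{2d-1}(X_{0}:\cdots:\widehat{X_{j}}:\cdots:X_{2d})$. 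Rearranging,
\[
\sum_{j=0}^{2d}(-1)^{j}\int_{\y_{j}^{*}}\mathfrak{R}_{2d-1} \;=\; N\cdot (2\pi i)^{2d-1},
\]
and dividing by $(2\pi i)^{d-1}$ gives $N\cdot(2\pi i)^{d}\in \ZZ(d)$, as desired.

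The main obstacle is the analytic point that $\iota^{*}\mathfrak{R}_{2d}$ really is a current on $\x$ and commutes with $d$ in the distributional sense, so that the Stokes step is legitimate. This is not obvious because $\mathfrak{R}_{2d}$ is singular along loci like $T_{\underline{f}^{[2d]}}$ and the hyperplanes $\{X_{j}=0\}$, and restriction of currents to subvarieties is delicate in general; but it is precisely the general-position hypothesis that guarantees proper meeting with every stratum appearing in the singular support of the individual terms of $\mathfrak{R}_{2d}$, reducing the verification to the same local integrability computations as in Appendix II to $\S 3$. If $\x$ is singular we first replace it by a smooth model $\tilde{\x}\twoheadrightarrow\x$ and note that the exceptional locus is negligible for integration purposes; the $*$ on $\y_{j}^{*}$ likewise absorbs any sub-generic intersection components, so neither singularity affects the computation.
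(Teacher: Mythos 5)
Your proof is correct and follows essentially the same route as the paper's: pull back the telescoping formula of Proposition \ref{prop telescope} to $\x$, kill the $\Omega_{2d}^{\Delta}$-term by Hodge type, and integrate (Stokes) to leave the $\delta_{T_{2d}^{\Delta}}$-contribution as the $\ZZ(d)$-valued discrepancy. You simply spell out the intermediate bookkeeping (the projection formula identifying the pushforward terms, the integer $N$, the final normalization) that the paper leaves implicit.
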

\begin{proof}
The general position assumption allows us to pull back the result
of Proposition \ref{prop telescope}. Noting that by Hodge type we
have $\imath_{\x}^{*}\Omega_{2d}^{\Delta}=0$, this gives
\[
d[\imath_{\x}^{*}\mathfrak{R}_{2d}]+(2\pi i)^{2d}\delta_{\x\cdot T_{2d}^{\Delta}}=2\pi i\sum_{j=0}^{2d}(-1)^{j}\imath_{\x}^{*}(\rho_{j})_{*}\mathfrak{R}_{2d-1}.
\]
Dividing by $(2\pi i)^{d}$ and integrating over $\x$ gives the result.
\end{proof}
We have written it in this form because the first term of (say) $S_{2d-1}(X_{1}:\cdots:X_{2d})$
whose pullback to $\y_{0}^{*}$ does not vanish, is
\[
(2\pi i)^{d-1}\delta_{T_{-\frac{X_{2}}{X_{1}}}\cap\cdots\cap T_{-\frac{X_{d}}{X_{d-1}}}}\log\left(\frac{-X_{d+1}}{X_{d}}\right)\text{dlog}\left(\frac{X_{d+2}}{X_{d+1}}\right)\wedge\text{dlog}\left(\frac{X_{2d}}{X_{2d-1}}\right).
\]
For $\x\cong\PP^{d}$ a linear subvariety, one expects Theorem \ref{thm rec law A}
to translate into functional equations for (a variant of) $Li_{d}$.
It turns out that the $S_{m}^{\Delta}$ version of the result, which
allows for more singular integrals, is much more suited to making
this connection.

There is also a natural ``projective dual'' to Theorem \ref{thm rec law A},
which we shall only state for the $S_{m}^{\Delta}$. (We do not know
if an analogue of Lemma \ref{lemma 2 on Xi} holds for the $R_{m}^{\Delta}$.)
In first approximation, one would expect a statement of the following
form: given $\x\subset\PP^{2d}$ general of dimension $d-1$, the
alternating sum
\[
\sum_{j=0}^{2d}(-1)^{j}\int_{\x}\frac{1}{(2\pi i)^{d-1}}S_{2d-1}^{\Delta}\left(X_{0}:\cdots:\widehat{X_{j}}:\cdots:X_{2d}\right)
\]
is zero mod $\ZZ(d)$. (Note that this morally involves projecting
$\x$ to the coordinate hyperplanes in $\PP^{2d}$, rather than intersecting
with them.) This turns out to require correction terms, essentially
because \emph{complex}-valued regulator currents cannot be made exactly
alternating multilinear in their arguments. 

In order to make the corrections, we shall require two lemmas. Introduce
the notation
\[
S_{alt}^{k}:=\sum_{j=0}^{k+2}(-1)^{j}S_{k+1}^{\Delta}\left(X_{0}:\cdots:\widehat{X_{j}}:\cdots:X_{k+2}\right)\in D^{k}(\PP^{k+2}),
\]
\[
I_{*}^{k+2}:=\sum_{j=0}^{k+2}(-1)^{j}\rho_{*}^{j}:\, D^{*-2}(\PP^{k+1})\to D^{*}(\PP^{k+2}),
\]
where we recall $\rho^{j}:\PP^{k+1}\hookrightarrow\PP^{k+2}$ is the
inclusion of the $j^{\text{th}}$ coordinate hyperplane. Note that
$I_{*}^{\ell+1}\circ I_{*}^{\ell}=0$. For $k$ odd, let $P_{k}$
denote a fixed $\PP^{\frac{k+5}{2}}\subset\PP^{k+2}$. To motivate
the first lemma, observe that on $\PP^{2}$
\[
S_{alt}^{0}=\log\left(-\frac{X_{2}}{X_{1}}\right)-\log\left(-\frac{X_{2}}{X_{0}}\right)+\log\left(-\frac{X_{1}}{X_{0}}\right)
\]
\[
=:\pi i\delta_{\Gamma_{012}}
\]
takes values $\pm\pi i$, making $\Gamma_{012}$ an integral $4$-chain
(or $\ZZ$-valued $0$-current). A computation shows that $S_{alt}^{1}=$\small 
\[
R_{2}\left(-\frac{X_{2}}{X_{1}},-\frac{X_{3}}{X_{2}}\right)-R_{2}\left(-\frac{X_{2}}{X_{0}},-\frac{X_{3}}{X_{2}}\right)+R_{2}\left(-\frac{X_{1}}{X_{0}},-\frac{X_{3}}{X_{1}}\right)-R_{2}\left(-\frac{X_{1}}{X_{0}},-\frac{X_{2}}{X_{1}}\right)
\]
\normalsize 
\[
=d\left\{ \pi i\log\left(-\frac{X_{3}}{X_{2}}\right)\delta_{\Gamma_{012}}\right\} -\frac{1}{2}(2\pi i)^{2}\delta_{T_{-\frac{X_{1}}{X_{0}}}\cap\Gamma_{123}},
\]
which forms the base case for 
\begin{lem}
There exists a sequence of currents $\Xi^{k}\in D^{k}(\PP^{k+3})$
\emph{(}$k=0,1,2,\ldots$\emph{)} and constants $\alpha_{1},\alpha_{3},\alpha_{5},\ldots\in\CC$
such that for each $k\geq0$\begin{equation}\label{eqn S_alt/Xi}S_{alt}^{k+1}+2\pi iI_{*}^{k+3}\Xi^{k-1}\equiv\left\{ \begin{array}{cc}d\Xi^{k}, & k\text{ even}\\d\Xi^{k}+\alpha_{k}\delta_{I_{*}^{k+3}P_{k}}, & k\text{ odd}\end{array}\right.
\end{equation}modulo $\mathfrak{C}_{k+2}:=\frac{1}{2}\ZZ(k+2)$-valued chains.\end{lem}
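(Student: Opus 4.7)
The plan is to proceed by induction on $k$, with the convention $\Xi^{-1}:=0$. The base case $k=0$ is the explicit computation displayed just before the lemma statement: setting $\Xi^0:=\pi i\log(-X_3/X_2)\delta_{\Gamma_{012}}$, one has $S_{alt}^1=d\Xi^0-\tfrac12(2\pi i)^2\delta_{T_{-X_1/X_0}\cap\Gamma_{123}}$, with the error term an honest $\mathfrak{C}_2$-valued chain (and no $\alpha_0\delta_{I_*^{3}P_0}$ correction, since $k=0$ is even).

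For the inductive step, I apply Proposition \ref{prop telescope} to each $S_{k+2}^\Delta(X_0:\cdots:\widehat{X_j}:\cdots:X_{k+3})$ appearing in $S_{alt}^{k+1}$ and form the alternating sum over $j$. A $K$-theoretic computation, analogous to the one in the proof of Lemma \ref{lem 2.2} but at the level of forms, shows that $\sum_j(-1)^j\Omega_{k+2}^\Delta(\widehat{X_j})\equiv 0$ identically as a $(k+2,0)$-form on $\PP^{k+3}$; the alternating sum of the real-chain terms assembles into $(2\pi i)^{k+2}\delta_{\partial T_{k+3}^\Delta}\in\mathfrak{C}_{k+2}$; and a rearrangement of the remaining double alternating sum over facet indices produces $2\pi i\, I_*^{k+3}(S_{alt}^k)$. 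Thus
\[
dS_{alt}^{k+1}\equiv 2\pi i\, I_*^{k+3}(S_{alt}^k)\pmod{\mathfrak{C}_{k+2}}.
\]

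Next, invoke the inductive hypothesis for $S_{alt}^k$ to write $S_{alt}^k\equiv d\Xi^{k-1}-2\pi i\,I_*^{k+2}\Xi^{k-2}$ modulo $\mathfrak{C}_{k+1}$ (and, when $k-1$ is odd, an extra $\alpha_{k-1}\delta_{I_*^{k+2}P_{k-1}}$ term). Using $I_*^{k+3}\circ I_*^{k+2}=0$, $d\circ I_*^{k+3}=I_*^{k+3}\circ d$, and $I_*^{k+3}(\delta_{I_*^{k+2}P_{k-1}})=0$, applying $I_*^{k+3}$ to the IH gives $I_*^{k+3}(S_{alt}^k)\equiv d(I_*^{k+3}\Xi^{k-1})\pmod{\mathfrak{C}_{k+2}}$. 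Hence the current $Q:=S_{alt}^{k+1}+2\pi i\,I_*^{k+3}\Xi^{k-1}$ satisfies $dQ\equiv 0$ modulo $\mathfrak{C}_{k+2}$-valued chains. To finish, invoke the de Rham cohomology of $\PP^{k+3}$ in degree $k+1$: it vanishes when $k$ is even and is generated by $[\delta_{\PP^{(k+5)/2}}]$ when $k$ is odd. Reading $Q$ as a closed class in the quotient complex $D^\bullet(\PP^{k+3})/\mathfrak{C}_\bullet$, its cohomology class is either zero (for $k$ even) or a scalar multiple $\alpha_k$ of the generator $[\delta_{I_*^{k+3}P_k}]$ (for $k$ odd). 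In either case $Q-\alpha_k\delta_{I_*^{k+3}P_k}$ (with $\alpha_k:=0$ if $k$ is even) is $d$-exact modulo $\mathfrak{C}_{k+2}$, and I take $\Xi^k$ to be an explicit Poincar\'e-lemma primitive---of the schematic form $\Xi^k=c_k\log(-X_{k+3}/X_{k+2})\cdot I_*^{k+3}\Xi^{k-1}+(\text{lower-order corrections})$---generalizing the base case.

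The principal obstacle is twofold. First is sign bookkeeping: tracking the interleaved alternating sums over both the ``omitted coordinate'' index in $S_{alt}$ and the facet index in Proposition \ref{prop telescope}, and verifying the identities $I_*^{k+3}\circ I_*^{k+2}=0$, $d\circ I_*^{k+3}=I_*^{k+3}\circ d$, and the double-alternating-sum rearrangement all hold with compatible sign conventions. Second, and more analytic: checking that the explicitly constructed primitives $\Xi^k$ are honest currents on $\PP^{k+3}$---that their individual summands have integrable singularities along the coordinate hyperplanes and the real chains $T^\Delta$---a verification analogous to, but more elaborate than, the one in Lemma \ref{lemma 2.1} for $S_n^\Delta$ itself, since the inductive formula introduces additional $\log$ and $\delta$ factors that can threaten termwise convergence.
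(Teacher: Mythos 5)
Your argument mirrors the paper's proof step by step: compute $dS_{alt}^{k+1}$ from Proposition \ref{prop telescope} together with $\sum_j(-1)^j\Omega_{k+2}^\Delta(\widehat{X_j})=0$, feed in the inductive hypothesis using $I_*^{\ell+1}\circ I_*^\ell=0$ and $d\circ I_*=I_*\circ d$, and appeal to $H^{k+1}(\PP^{k+3})$ (zero for $k$ even, generated by $[\PP^{(k+5)/2}]$ for $k$ odd). One sign slip to note: the correct identity is $dS_{alt}^{k+1}\equiv -2\pi i\,I_*^{k+3}S_{alt}^{k}$, not $+2\pi i$; with the sign you wrote, your two displayed relations would give $dQ\equiv 4\pi i\,d(I_*^{k+3}\Xi^{k-1})$ rather than $0$, so it is the minus sign that makes $Q=S_{alt}^{k+1}+2\pi i\,I_*^{k+3}\Xi^{k-1}$ (which you state with the lemma's correct sign) closed mod $\mathfrak{C}_{k+2}$.
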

\begin{proof}
By Proposition \ref{prop telescope} and the fact that 
\[
\sum_{j=0}^{n+3}(-1)^{j}\Omega_{n+2}^{\Delta}\left(X_{0}:\cdots:\widehat{X_{j}}:\cdots:X_{n+3}\right)=0
\]
on $\PP^{n+3}$, we have for each $n$
\[
dS_{alt}^{n+1}\equiv-2\pi iI_{*}^{n+3}S_{alt}^{n}\;\;\;(\text{mod }\mathfrak{C}_{n+2}).
\]
Inductively assuming \eqref{eqn S_alt/Xi} for $k=n-1$, this gives
\[
dS_{alt}^{n+1}\equiv-2\pi iI_{*}^{n+3}\{-2\pi iI_{*}^{n+2}\Xi^{n-2}+d\Xi^{n-1}\,\underset{\text{if }n\text{ even}}{\underbrace{[+\alpha_{n-1}\delta_{I_{*}^{n+2}P_{n-1}}]}}\}
\]
\[
\equiv-2\pi id\left\{ I_{*}^{n+3}\Xi^{n-1}\right\} 
\]
\[
\implies\; S_{alt}^{n+1}+2\pi iI_{*}^{n+3}\Xi^{n-1}\text{ is closed (mod }\mathfrak{C}_{n+2}\text{)}.
\]
If $n$ is even, we are done since $H^{n+1}(\PP^{n+3})=\{0\}.$ Otherwise,
noting that $[I_{*}^{n+3}P_{n}]=[\PP^{\frac{n+5}{2}}]\in H^{n+1}(\PP^{n+3}),$
there exist $\alpha\in\CC$ and $\Xi^{n}\in D^{n}(\PP^{n+3})$ such
that
\[
S_{alt}^{n+1}\equiv-2\pi iI_{*}^{n+3}\Xi^{n-1}+d\Xi^{n}+\alpha\delta_{I_{*}^{n+3}P_{n}}\;\text{(mod }\mathfrak{C}_{n+2}\text{).}
\]

\end{proof}
In fact, a more detailed computation reveals that with the right choices
of the $\{\Xi^{k}\}$, the $\{\alpha_{k}\}$ may be taken to be $0$:
\begin{lem}
\label{lemma 2 on Xi}One has for each $k\geq0$\begin{equation}\label{eqn *!}S_{alt}^{k+1}+2\pi iI_{*}^{k+3}\Xi^{k-1}\equiv d\Xi^{k}\;(\text{mod }\mathfrak{C}_{k+2}),
\end{equation}where 
\[
\Xi^{k}=\pi i\sum_{\ell=0}^{k}(-2\pi i)^{\ell}\delta_{\Gamma^{\ell}}\log\left(-\frac{X_{\ell+3}}{X_{\ell+2}}\right)\text{dlog}\left(\frac{X_{\ell+4}}{X_{\ell+3}}\right)\wedge\cdots\wedge\text{dlog}\left(\frac{X_{k+3}}{X_{k+2}}\right)
\]
and the codimension-$\ell$ chain%
\footnote{The widehats mean that those two $T$'s are omitted from the intersection.%
}
\[
\Gamma^{\ell}=\sum_{j=0}^{\ell}\Gamma_{j,j+1,j+2}T_{-\frac{X_{1}}{X_{0}}}\cap\cdots\cap\widehat{T_{-\frac{X_{j+1}}{X_{j}}}}\cap\widehat{T_{-\frac{X_{j+2}}{X_{j+1}}}}\cap\cdots\cap T_{-\frac{X_{\ell+2}}{X_{\ell+1}}}.
\]
\end{lem}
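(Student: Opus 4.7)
The plan is to establish \eqref{eqn *!} by induction on $k$, directly verifying that the explicit $\Xi^k$ written above does the job; the consequence is that the $\alpha_k$ from the previous lemma are, a posteriori, all zero. The base case $k=0$ is precisely the computation displayed immediately before the statement of the lemma, where $\Xi^0 = \pi i \log(-X_3/X_2)\delta_{\Gamma_{012}}$ and the ``error'' $-\tfrac{1}{2}(2\pi i)^2\delta_{T_{-X_1/X_0}\cap \Gamma_{123}}$ lies in $\mathfrak{C}_2$ (using the convention $\Xi^{-1}=0$).

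For the inductive step, I differentiate $\Xi^k$ term by term using $d[\log(-X_{\ell+3}/X_{\ell+2})] = \text{dlog}(X_{\ell+3}/X_{\ell+2}) - 2\pi i \delta_{T_{-X_{\ell+3}/X_{\ell+2}}}$ and $d[\delta_{\Gamma^\ell}] = \delta_{\partial \Gamma^\ell}$. The $-2\pi i\,\delta_T$-piece combines with $\delta_{\Gamma^\ell}$ to produce $\delta_{\Gamma^\ell \cap T_{-X_{\ell+3}/X_{\ell+2}}}$, which (up to the contribution absorbed into the ``skipping'' chain inside $\Gamma_{\ell+1,\ell+2,\ell+3}$) telescopes against the $(\ell+1)$-summand of $\Xi^k$. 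The $\text{dlog}$-piece merges with the existing $\text{dlog}(X_{\ell+4}/X_{\ell+3})\wedge\cdots\wedge\text{dlog}(X_{k+3}/X_{k+2})$ to reconstruct $\Omega^\Delta$-type contributions supported on $\Gamma^\ell$. The boundary $\partial \Gamma^\ell$ is computed from $\partial \Gamma_{j,j+1,j+2}$, which by the Weil reciprocity identity $\log(-X_{j+2}/X_{j+1}) - \log(-X_{j+2}/X_j) + \log(-X_{j+1}/X_j) \equiv \pi i\, \delta_{\Gamma_{j,j+1,j+2}} \pmod{\ZZ(1)}$ expands into signed $T$-chains involving ``index-skipping'' pieces $T_{-X_{j+2}/X_j}$.

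Concurrently, I compute $2\pi i\, I_*^{k+3}\Xi^{k-1} = 2\pi i \sum_j (-1)^j (\rho^j)_*\Xi^{k-1}$: each pullback $(\rho^j)^*$ sets $X_j=0$, so, given the specific coordinate range of each summand of $\Xi^{k-1}$, only a small window of indices $j$ yields nontrivial contributions, and those match precisely the ``index-skipping'' $\partial \Gamma^\ell$-terms identified above. Finally, $S_{alt}^{k+1}$ is expanded by applying the recursive formula for $S^\Delta_m$ displayed in the proof of Prop.~\ref{prop telescope} to each $S^\Delta_{k+2}(X_0:\cdots:\widehat{X_j}:\cdots:X_{k+3})$; this gives a doubly-indexed sum over $j$ and a residue-position parameter. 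Reorganizing by grouping consecutive triples $\{j',j'+1,j'+2\}$ into $\Gamma_{j',j'+1,j'+2}$-blocks (using the same Weil reciprocity identity) then matches $d\Xi^k - 2\pi i\, I_*^{k+3}\Xi^{k-1}$ modulo $\mathfrak{C}_{k+2}$.

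The main obstacle is bookkeeping: tracking signs and indices across the $\ell$-telescoping in $d\Xi^k$, the $j$-pullbacks in $I_*^{k+3}\Xi^{k-1}$, the nested residue expansion of $S^{k+1}_{alt}$, and the triple-index structure of $\partial \Gamma^\ell$. A useful organizing principle is that every piece naturally breaks into a part carrying an explicit $\log(-X_{a+1}/X_a)$-factor (contributing to a specific $\ell$-summand of $\Xi^k$) and a ``closed'' residue part (contributing via the $\Omega^\Delta$-like merging of $\text{dlog}$ factors). Once all three expansions are laid out in the same combinatorial indexing, the identity \eqref{eqn *!} becomes a cell-by-cell verification; the absence of any residual $\delta_{I_*^{k+3}P_k}$-term (so $\alpha_k=0$) is then an output of the matching rather than an obstruction to be argued separately.
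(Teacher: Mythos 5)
Your strategy --- differentiate $\Xi^k$ termwise, compute the pushforward $I_*^{k+3}\Xi^{k-1}$, expand $S_{alt}^{k+1}$ via the residue recursion from Proposition~\ref{prop telescope}, and match everything cell by cell --- is essentially the paper's own proof; the ``induction on $k$'' framing is largely cosmetic, since your inductive step is already a self-contained verification of \eqref{eqn *!} at level $k$. The one thing the paper does more explicitly (and which you should make explicit too, as it is the real content) is to isolate the closed form
\[
S_{alt}^{k+1}=\sum_{\ell=0}^{k+1}(-2\pi i)^{\ell}\Bigl\{ \delta_{T_{alt}^{\ell}}\log\bigl(-\tfrac{X_{\ell+2}}{X_{\ell+1}}\bigr)+(-1)^{\ell}\pi i\,\delta_{T^{\ell}\cap\Gamma_{\ell,\ell+1,\ell+2}}\Bigr\}\, \text{dlog}\bigl(\tfrac{X_{\ell+3}}{X_{\ell+2}}\bigr)\wedge\cdots\wedge\text{dlog}\bigl(\tfrac{X_{k+3}}{X_{k+2}}\bigr)
\]
and the two chain identities $\tfrac{1}{2}\partial\Gamma^{\ell}=-T_{alt}^{\ell+1}+\{\text{boundary terms}\}$ and $\Gamma^{\ell-1}\cap T_{-X_{\ell+2}/X_{\ell+1}}+T^{\ell}\cap\Gamma_{\ell,\ell+1,\ell+2}=\Gamma^{\ell}$, which in your sketch are folded implicitly into the ``reorganization'' and ``telescoping'' language.
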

\begin{proof}
(Sketch) The main step is to show directly that $S_{alt}^{k+1}=$\small 
\[
\sum_{\ell=0}^{k+1}(-2\pi i)^{\ell}\left\{ \begin{array}{c}
\delta_{T_{alt}^{\ell}}\log\left(-\frac{X_{\ell+2}}{X_{\ell+1}}\right)+\\
(-1)^{\ell}\pi i\delta_{T^{\ell}\cap\Gamma_{\ell,\ell+1,\ell+2}}
\end{array}\right\} \text{dlog}\left(\frac{X_{\ell+3}}{X_{\ell+2}}\right)\wedge\cdots\wedge\text{dlog}\left(\frac{X_{k+3}}{X_{k+2}}\right),
\]
\normalsize where $T^{\ell}:=T^{\ell}[0:\cdots:\ell]=T_{-\frac{X_{1}}{X_{0}}}\cap\cdots\cap T_{-\frac{X_{\ell}}{X_{\ell-1}}}$
and 
\[
T_{alt}^{\ell}:=\sum_{j=0}^{\ell+1}(-1)^{j}T^{\ell}[0:\cdots:\widehat{j}:\cdots:\ell].
\]
(Note that the term in braces is just $\pi i\delta_{\Gamma_{012}}$
for $\ell=0$.) To verify \eqref{eqn *!}, one then uses the formula
$\frac{1}{2}\partial\Gamma^{\ell}=-T_{alt}^{\ell+1}+\{\text{boundary terms}\}$,
the first case of which is
\[
\frac{1}{2}\partial\Gamma^{0}=\frac{1}{2}\partial\Gamma_{012}=T_{-\frac{X_{2}}{X_{0}}}-T_{-\frac{X_{1}}{X_{0}}}-T_{-\frac{X_{2}}{X_{1}}}=-T_{alt}^{1},
\]
and $\Gamma^{\ell-1}\cap T_{-\frac{X_{\ell+2}}{X_{\ell+1}}}+T^{\ell}\cap\Gamma_{\ell,\ell+1,\ell+2}=\Gamma^{\ell}.$
Details are left to the reader.
\end{proof}
We can now state
\begin{thm}
\textbf{\emph{\label{thm Rec Law B}(Reciprocity Law B)}} Let $\x\subset\PP^{2d}$
be an irreducible subvariety of dimension $d-1$, with $\y_{i}:=\x\cdot(X_{i})$
for $i=0,\ldots,2d.$ Assuming that $\x$ and its projections to the
coordinate hyperplanes are in general position, we have
\[
0\equiv\sum_{j=0}^{2d}(-1)^{j}\int_{\x}\frac{1}{(2\pi i)^{d-1}}S_{2d-1}^{\Delta}\left(X_{0}:\cdots:\widehat{X_{j}}:\cdots:X_{2d}\right)\mspace{50mu}
\]
\[
\mspace{50mu}+\sum_{j=0}^{2d}(-1)^{j}\int_{\y_{j}}\frac{1}{(2\pi i)^{d-2}}\Xi^{2d-4}\left(X_{0}:\cdots:\widehat{X_{j}}:\cdots:X_{2d}\right)
\]
modulo $\frac{1}{2}\ZZ(d)$.\end{thm}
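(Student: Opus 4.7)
The strategy is to apply Lemma \ref{lemma 2 on Xi} at level $k = 2d-3$, which yields the identity
\[
S_{alt}^{2d-2} + 2\pi i\, I_*^{2d}\Xi^{2d-4} \equiv d\Xi^{2d-3} \pmod{\mathfrak{C}_{2d-1}}
\]
in $D^{2d-2}(\PP^{2d})$, to pull it back to $\x$, and to integrate. Note that dividing through by $(2\pi i)^{d-1}$ converts the indeterminacy $\mathfrak{C}_{2d-1} = \tfrac{1}{2}(2\pi i)^{2d-1}\ZZ$ into $\tfrac{1}{2}(2\pi i)^d \ZZ = \tfrac{1}{2}\ZZ(d)$, exactly the ambiguity appearing in the theorem.

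First I would verify that the general position hypothesis on $\x$ and its projections to coordinate hyperplanes makes each of the three currents $S_{alt}^{2d-2}$, $\Xi^{2d-3}$, and $\rho_*^j\Xi^{2d-4}$ restrict to a bona fide current on $\x$. By the explicit formulas of Lemma \ref{lemma 2 on Xi}, these currents are built from $\delta$-currents on the codimension-$\ell$ chains $\Gamma^\ell$ together with logarithmic and $\text{dlog}$ factors in the coordinate ratios $-X_{i+1}/X_i$; proper intersection of $\x$ with the tubes $T_{-X_{i+1}/X_i}$ and with the coordinate hyperplanes (as encoded in Definition \ref{defn general pos}) guarantees integrability by the same analysis as in Lemma \ref{lemma 2.1} and Appendix II to $\S3$.

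Next, integrating the rescaled identity pulled back to $\x$ over the boundaryless $(2d-2)$-real-dimensional $\x$, the right-hand side $\int_\x d\Xi^{2d-3}/(2\pi i)^{d-1}$ vanishes by Stokes's theorem for currents. On the left, $\int_\x S_{alt}^{2d-2}/(2\pi i)^{d-1}$ expands directly (by definition of $S_{alt}^{2d-2}$) to the first sum in the theorem statement. For the middle term, the projection formula applied to the proper intersection $\x \cdot \{X_j=0\} = \y_j$ (valid by general position) gives
\[
\int_\x \rho_*^j\Xi^{2d-4}\bigl(X_0:\cdots:\widehat{X_j}:\cdots:X_{2d}\bigr) = \int_{\y_j}\Xi^{2d-4}\bigl(X_0:\cdots:\widehat{X_j}:\cdots:X_{2d}\bigr),
\]
so that summing with signs and absorbing the extra factor $2\pi i$ (reducing $(2\pi i)^{d-1}$ to $(2\pi i)^{d-2}$) recovers the second sum of the theorem with precisely the stated normalization.

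The main obstacle is technical: one must justify that pullback of the mod-$\mathfrak{C}_{2d-1}$ identity to $\x$ and subsequent integration genuinely lands in $\tfrac{1}{2}\ZZ(d)$, i.e., that anomalous intersections of the singular supports $\Gamma^\ell \subset \PP^{2d}$ with $\x$ produce no extra contributions beyond the expected ambiguity. Concretely, each $\mathfrak{C}_{2d-1}$-valued summand is a $\tfrac{1}{2}(2\pi i)^{2d-1}\ZZ$-multiple of a $\delta$-current on a real codimension-$(2d-2)$ chain in $\PP^{2d}$; its proper intersection with $\x$ is a $0$-dimensional $\tfrac{1}{2}\ZZ$-chain integrating to an element of $\tfrac{1}{2}\ZZ(2d-1)$, which after division by $(2\pi i)^{d-1}$ lies in $\tfrac{1}{2}\ZZ(d)$. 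The general position hypothesis ensures that all of these intersections are indeed proper, and once this dimension-counting is carried out the theorem follows formally from Lemma \ref{lemma 2 on Xi}.
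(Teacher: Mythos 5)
Your proposal is correct and follows exactly the approach the paper takes: apply Lemma \ref{lemma 2 on Xi} with $k=2d-3$ (so that $S_{alt}^{k+1}=S_{alt}^{2d-2}$ and $I_*^{k+3}\Xi^{k-1}=I_*^{2d}\Xi^{2d-4}$ live in $D^{2d-2}(\PP^{2d})$), pull back to $\x$, kill $d\Xi^{2d-3}$ by Stokes, rewrite $\int_\x\rho_*^j\Xi^{2d-4}$ as $\int_{\y_j}\Xi^{2d-4}$ via the projection formula, and rescale by $(2\pi i)^{-(d-1)}$ to convert the $\mathfrak{C}_{2d-1}=\tfrac{1}{2}\ZZ(2d-1)$ ambiguity into $\tfrac{1}{2}\ZZ(d)$. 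The paper simply states this as ``follows immediately from Lemma \ref{lemma 2 on Xi} with $k=2d-3$,'' and your expansion of the bookkeeping (currents, dimension count, indeterminacy) fills in precisely the implicit steps.
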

\begin{proof}
Follows immediately from Lemma \ref{lemma 2 on Xi} with $k=2d-3$.
\end{proof}
The correction terms $\int_{\y_{j}}\Xi^{2d-3}(\cdots)$, as we shall
see, may be thought of as ``lower-weight'' in the context of linear
subvarieties and polylogarithms. In essence, one is trading off the
formal simplicity of Reciprocity Law A for greater algebraic simplicity
in the arguments of the expected $Li_{d}$ terms $\int_{\x}S_{2d-1}^{\Delta}(\cdots)$.

\section{Functional equations for $Li_{2}$}

To illustrate the different strengths of the two reciprocity laws
of the last section, we shall apply both to obtain different forms
of the 5-term relation for the dilogarithm
\[
Li_{2}(z)=-\int_{0}\log(1-z)\frac{dz}{z}.
\]
Reciprocity Law A involves intersecting an $\x^{d}\subset\PP^{2d}$
with the coordinate hyperplanes. Taking $d=2$, let $\x$ be the $\PP^{2}\subset\PP^{4}$
obtained by projectivizing the row-space of
\[
\left(\begin{array}{ccccc}
1 & 1 & -1 & 0 & 0\\
\frac{1}{x} & 1 & 0 & -1 & 0\\
\frac{1}{y} & 1 & 0 & 0 & -1
\end{array}\right).
\]
The intersections $\y_{i}$ \label{eqn 5 matrices}($i=0,\ldots,4$)
are given by projectivizing the sub-row-spaces with $X_{i}=0$ (and
deleting the $i^{\text{th}}$ column):\begin{equation}\left\{ \begin{array}{cc}\left(\begin{array}{cccc}\frac{x-1}{x} & \frac{1}{x} & -1 & 0\\\frac{y-1}{y} & \frac{1}{y} & 0 & -1\end{array}\right) & i=0\\\left(\begin{array}{cccc}\frac{1-x}{x} & 1 & -1 & 0\\\frac{1-y}{y} & 1 & 0 & -1\end{array}\right) & i=1\\\left(\begin{array}{cccc}\frac{1}{x} & 1 & -1 & 0\\\frac{1}{y} & 1 & 0 & -1\end{array}\right) & i=2\\\left(\begin{array}{cccc}1 & 1 & -1 & 0\\\frac{1}{y} & 1 & 0 & -1\end{array}\right) & i=3\\\left(\begin{array}{cccc}1 & 1 & -1 & 0\\\frac{1}{x} & 1 & 0 & -1\end{array}\right) & i=4\end{array}\right.
\end{equation}Let $\y$ be the $\PP^{1}\subset\PP^{3}$ given by 
\[
\begin{array}{c}
1\\
-t
\end{array}\left(\begin{array}{cccc}
a & c & -1 & 0\\
b & d & 0 & -1
\end{array}\right),
\]
where the notation means that $t$ parametrizes $\y$ by $t\mapsto[a-bt:c-dt:-1:t].$
On $\PP^{3}$, we have $\frac{1}{2\pi i}S_{3}^{\Delta}(X_{0}:X_{1}:X_{2}:X_{3})=$
\[
\frac{1}{2\pi i}\log\left(-\frac{X_{1}}{X_{0}}\right)\text{dlog}\left(\frac{X_{2}}{X_{1}}\right)\wedge\text{dlog}\left(\frac{X_{3}}{X_{2}}\right)+\log\left(-\frac{X_{2}}{X_{1}}\right)\text{dlog}\left(\frac{X_{3}}{X_{2}}\right)\delta_{T_{-\frac{X_{1}}{X_{0}}}}
\]
\[
+2\pi i\log\left(-\frac{X_{3}}{X_{2}}\right)\delta_{T_{-\frac{X_{1}}{X_{0}}}\cap T_{-\frac{X_{2}}{X_{1}}}}.
\]
Only the middle term survives the pullback to $\y$, since $\text{dlog}\wedge\text{dlog}=0$
and $T_{-\frac{c-dt}{a-bt}}\cap T_{c-dt}$ is the closure of the intersection
of two open arcs that do not meet. So we must compute
\[
\frac{1}{2\pi i}\int_{\y}S_{3}^{\Delta}=-\int_{T_{-\frac{c-dt}{a-bt}}}\log(c-dt)\text{dlog}(t)
\]
\[
=-\int_{\frac{a}{b}}^{\frac{c}{d}}\left\{ \log c+\log\left(1-\frac{d}{c}t\right)\right\} \text{dlog}(t)
\]
\[
=Li_{2}(1)-Li_{2}\left(\frac{ad}{bc}\right)+\log(c)\log\left(\frac{ad}{bc}\right).
\]
Taking the alternating sum over the 5 matrices \eqref{eqn 5 matrices},
Theorem \ref{thm rec law A} gives the Abel-Spence relation%
\footnote{combine (1.22) (with $x\mapsto1-x$) and (1.11) (with $z=x$) in \cite{Le}%
}\begin{multline} \label{ASrel} 0 = Li_{2}(x)-Li_{2}(y)+Li_{2}\left( \frac{y}{x} \right) - Li_{2}\left( \frac{y(1-x)}{x(1-y)} \right) + Li_{2} \left( \frac{1-x}{1-y} \right) \\ -Li_{2}(1)+\log(x)\log\left(\frac{1-x}{1-y}\right) . \end{multline}

For a demonstration of Reciprocity Law B, we will need the integral
of $\frac{1}{2\pi i}S_{3}^{\Delta}$ over the most general form\begin{equation}\label{eqn general P^1}\begin{array}{c}t\\1\end{array}\left(\begin{array}{cccc}a_0 & a_1 & a_2 & a_3\\b_0 & b_1 & b_2 & b_3\end{array}\right)
\end{equation}of $\y\cong\PP^{1}\subset\PP^{3}$. Using the substitution $v=-\frac{a_{3}t+b_{3}}{a_{2}t+b_{2}}$
and denoting the minor $a_{i}b_{j}-a_{j}b_{i}$ by $|ij|$, this is
\[
\int_{T_{-\frac{a_{1}t+b_{1}}{a_{0}t+b_{0}}}}\log\left(-\frac{a_{2}t+b_{2}}{a_{1}t+b_{1}}\right)\text{dlog}\left(-\frac{a_{3}t+b_{3}}{a_{2}t+b_{2}}\right)=
\]
\[
\int_{-\frac{|03|}{|02|}}^{-\frac{|13|}{|12|}}\log\left(\frac{-|23|}{|12|v+|13|}\right)\text{dlog}(v)=
\]
\[
-\log\left(-\frac{|23|}{|13|}\right)\log\left(\frac{|12||03|}{|13||02|}\right)-Li_{2}\left(\frac{|12||03|}{|13||02|}\right)+Li_{2}(1)
\]
\[
=:\mathscr{L}\{0123\}.
\]
Writing $t_{i}:=-\frac{b_{i}}{a_{i}}$, note that $\frac{|12||03|}{|13||02|}=\frac{(t_{0}-t_{3})(t_{1}-t_{2})}{(t_{0}-t_{2})(t_{1}-t_{3})}=:CR(t_{0},t_{1},t_{2},t_{3}).$

Now consider a general $\x\cong\PP^{1}$ in $\PP^{4}$ given by\begin{equation}\label{eqn general X P^1}\begin{array}{c}z\\1\end{array}\left(\begin{array}{ccccc}A_0 & A_1 & A_2 & A_3 & A_4 \\ B_0 & B_1 & B_2 & B_3 & B_4 \end{array}\right),
\end{equation}with projections to the coordinate $\PP^{3}$'s (obtained simply by
deleting a column) of the form \eqref{eqn general P^1}. To apply
Theorem \ref{thm Rec Law B}, we will also have to evaluate the correction
terms, or find some way to eliminate them. Again writing $|ij|$ for
the minors, $\{y_{j}\}=\y_{j}=\x\cdot(X_{j})$, and recalling that
on $\PP^{3}$ $\Xi^{0}\left(X_{0}:X_{1}:X_{2}:X_{3}\right)=\pi i\log\left(-\frac{X_{3}}{X_{2}}\right)\delta_{\Gamma_{012}},$
we find that 
\[
\sum_{j=0}^{4}(-1)^{j}\Xi^{0}(y_{j})=\sum_{j=0}^{4}(-1)^{j}\Xi^{0}\left(|j0|:\cdots:\widehat{|jj|}:\cdots:|j4|\right)
\]
\[
=:\mathscr{K}\{01234\}\in\CC
\]
 is anti-invariant under the permutation $\sigma:=(04)(13)$ ``flipping''
\eqref{eqn general X P^1}.

On the other hand, noting that $(03)(12)$ fixes $z:=\frac{|12||03|}{|13||02|},$
and $\frac{|23||10|}{|13||20|}=1-z,$ we have
\[
\tilde{\mathscr{L}}\{0123\}:=\frac{1}{2}\left(\mathscr{L}\{0123\}+\mathscr{L}\{3210\}\right)
\]
\[
=Li_{2}(1)-Li_{2}(z)-\frac{1}{2}\log(1-z)\log(z)
\]
\[
=:L_{2}(z)
\]
which is a version of the \emph{Rogers dilogarithm}. Adding $\frac{1}{2}$
of
\[
0=\frac{1}{2\pi i}\int_{\x}S_{alt}^{2}+\int_{\x}I_{*}^{4}\Xi^{0}
\]
\[
=\sum_{j=0}^{4}(-1)^{j}\mathscr{L}\{0\cdots\widehat{j}\cdots4\}+\mathscr{K}\{01234\}
\]
to $\sigma_{*}$ of itself therefore gives, with $z_{j}:=z(y_{j})=-\frac{B_{j}}{A_{j}},$\begin{equation} \label{eq!!27}
0=\sum_{j=0}^{4}(-1)^{j}L_{2}\left(CR(z_{0},\ldots,\widehat{z_{j}},\ldots,z_{4})\right)
\end{equation}which is the other classic form of the 5-term relation.
\begin{rem}
\label{remark !} The $\left\{ f_{j}(\x):=CR_{\x}(z_{0},\ldots,\widehat{z_{j}},\ldots,z_{4})\right\} $
define 5 rational functions on $Gr(2,5)$. Pulling them back to a
suitable open $U\subset\CC^{2}$ via\begin{align*}
g:U &\to Gr(2,5) \\
(x,y) &\mapsto span\left\{ \begin{matrix}(y^{-1},1,-1,0,1)\\ (1,x,-1,1,0) \end{matrix} \right\}
\end{align*}produces the functions $\left\{ F_{j}:=f_{j}\circ g\right\} =$
\[
x\,,\; y\,,\;\tfrac{y}{x}\,,\;\tfrac{y(1-x)}{x(1-y)}\,,\;\tfrac{1-x}{1-y}\,,
\]
whose level sets yield the Bol 5-web $\mathcal{B}_{5}$.%
\footnote{See \cite{He} for basic material on webs.%
} Clearly \eqref{eq!!27} pulls back to the variant\begin{equation} \label{p27add1}
0 = \sum_{j=0}^4 (-1)^j L_2\left( F_j (x,y)\right)
\end{equation}of \eqref{ASrel}, which is the most interesting of the 6 independent
abelian relations of $\mathcal{B}_{5}$. Moreover, the terms of \eqref{p27add1}
are described by\begin{equation} \label{p27add2}
2\pi i L_2\left( F_j (x,y)\right) = \int_{[g(x,y)]} S_3^{\Delta}\left(\cdots \widehat{X_j} \cdots \right) + \int_{\sigma [g(x,y)]} S^{\Delta}_3 \left( \cdots \widehat{X_j}\cdots \right) .
\end{equation}
\end{rem}

\section{A functional equation for $Li_{3}$}

Turning to the trilogarithm
\[
Li_{3}(z)=\int_{0}Li_{2}(z)\frac{dz}{z},
\]
we will show that the Kummer-Spence relation%
\footnote{This form of the relation is obtained from \cite[p. 177]{Le} by substituting
$u=\frac{ab-b+1}{ab+1}$, $v=\frac{1}{ab+1}$; it is the complex-valued
version of \cite[(1.17)]{Go3}.%
} \begin{multline}\label{6eq1}
-Li_3\left(\tfrac{ab-b+1}{ab^2}\right) -Li_3\left(\tfrac{ab-b+1}{a}\right)-Li_3\left(a(ab-b+1)\right)
\\
+2\left\{ Li_3(a)+Li_3(b)+Li_3(-ab)+Li_3(ab-b+1)-Li_3(1)\right.
\\
\left. +Li_3\left(\tfrac{ab-b+1}{-b}\right) + Li_3\left( \tfrac{ab-b+1}{ab}\right) \right\}  =
\log^2(a)\log(-ab)-\tfrac{\pi^2}{3}\log(a)-\tfrac{1}{3}\log^3(a)
\end{multline}essentially follows from Reciprocity Law B. The ``essentially''
means that we will work modulo \emph{degenerate} terms (i.e. products
of $\log$ and $Li_{2}$ in rational-function arguments) and assume
the relations\begin{equation}\label{6eq2}
Li_3(y)=Li_3(\tfrac{1}{y})+2\zeta(2)\log(y)-\tfrac{1}{6}\log^3(y)-\tfrac{i\pi}{2}\log^2(y)
\end{equation}\begin{multline}\label{6eq3}
Li_3(x)+Li_3(1-x)+Li_3(\tfrac{x}{x-1})= \\
Li_3(1)+Li_2(1)\log(1-x)-\tfrac{1}{2}\log(-x)\log^2(1-x)+\tfrac{1}{6}\log^3(1-x)
\end{multline}from \cite[pp. 154-5]{Le}. We shall denote $Li_{3}(z)=:[z]$, so
that \eqref{6eq2} and \eqref{6eq3} become $[y]\equiv[y^{-1}]$ and\begin{equation}\label{6eq4}
[x]+[1-x]+[1-\tfrac{1}{x}]\equiv [1]
\end{equation}modulo degenerates.

In contrast to the situation (of a $\PP^{1}$ in $\PP^{4}$) worked
out in $\S5$, the direct application of Reciprocity Law B to 
\[
\x:=\,\text{a completely general }\PP^{2}\text{ in }\PP^{6}
\]
seems somewhat intractable. Working modulo degenerates allows us to
eliminate the $\int_{\PP^{1}}\Xi^{2}$ integrals, which (by Lemma
\ref{lemma 2 on Xi}) take the same form as the $S_{3}^{\Delta}$
integrals worked out in $\S5$. At this point we can relax the notion
of general position in Definition \ref{defn general pos} to \emph{proper
intersections for $k\geq2$}:
\begin{lem}
\label{lem 7a}Let $\mathcal{U}\subset Gr(3,7)$ be the analytic open
on which $\x$ and its projections to the coordinate hyperplanes are
general in this sense. (This is the complement of a real codimension-1
subset.) Then writing $S_{5,\hat{j}}^{\Delta}:=S_{5}^{\Delta}\left(X_{0}:\cdots:\widehat{X_{j}}:\cdots X_{6}\right)$,
the integrals $\int_{\x}S_{5,\hat{j}}^{\Delta}$ are (complex) analytic
as a function of $\x\in\mathcal{U}$.
\end{lem}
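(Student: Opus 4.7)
The plan is to reduce, via the termwise expansion of $S_{5,\hat j}^\Delta$ in \eqref{eqnRn}, to checking that each elementary summand of $\int_{\x_\gamma}S_{5,\hat j}^\Delta$ is a holomorphic function of $\gamma\in\mathcal{U}$. Shrink $\mathcal{U}$ if necessary and choose a holomorphic lift $\gamma\mapsto M(\gamma)\in\mathrm{Mat}_{3\times 7}(\CC)$ of full rank 3 representing $\x_\gamma$; in an affine chart $\CC^2\hookrightarrow\PP^2\cong\x_\gamma$ with coordinates $y=(y_1,y_2)$, the pullbacks $L_k(\gamma;y)$ of the six remaining coordinates $X_i$ ($i\ne j$) are linear in $y$ with coefficients holomorphic in $\gamma$. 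Setting $f_k:=-L_k/L_{k-1}$, the expansion reduces the claim to proving that each
$$I_\ell(\gamma)\;:=\;\int_{\x_\gamma\cap T_{f_1(\gamma)}\cap\cdots\cap T_{f_{\ell-1}(\gamma)}}\log f_\ell(\gamma;y)\;\bigwedge_{m=\ell+1}^{5}\text{dlog}\,f_m(\gamma;y)\qquad(\ell=3,4,5)$$
is holomorphic in $\gamma$; terms with $\ell<3$ vanish on $\x_\gamma$ by bidegree.

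Integrability and continuity of each $I_\ell$ follow from the estimates of Appendix II to $\S3$: by the definition of $\mathcal{U}$, the divisors $\{L_k(\gamma)=0\}\subset\x_\gamma$ meet in normal crossings uniformly on compact sets $K\Subset\mathcal{U}$, so the integrand has only logarithmic singularities at the corners of the real chain, with an $L^1$-bound uniform in $\gamma\in K$. Dominated convergence then gives continuity.

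For holomorphicity itself, the key observation is that the chain $T_{f_1}\cap\cdots\cap T_{f_{\ell-1}}$ is a relative $(5-\ell)$-chain in $\x_\gamma$ with boundary on the holomorphic divisor $D_\gamma:=\bigcup_k\{L_k=0\}$, and $D_\gamma$ varies holomorphically in $\gamma$. On a small polydisk $D\subset\mathcal{U}$, branches of $\log L_k$ can be chosen jointly holomorphic in $(\gamma,y)$, so the integrand of $I_\ell$ represents a $\gamma$-holomorphic section of the algebraic de Rham sheaf $\mathcal{H}^{5-\ell}(\x_\gamma,D_\gamma)$. Simultaneously, the class $[T_{f_1}\cap\cdots\cap T_{f_{\ell-1}}]\in H_{5-\ell}(\x_\gamma,D_\gamma;\ZZ)$ is a flat section of the dual local system, since the underlying semi-algebraic description ($\mathrm{Re}\,f_k<0,\ \mathrm{Im}\,f_k=0$) has constant topological type on $\mathcal{U}$. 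The pairing is therefore a holomorphic function on $D$, so $I_\ell$ is holomorphic on $\mathcal{U}$.

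The main obstacle is the branch-choice bookkeeping: individually each $I_\ell$ is holomorphic only on simply-connected charts $D\subset\mathcal{U}$, and is ambiguous by $2\pi i\ZZ$-multiples of lower-weight integrals as $\gamma$ loops around the discriminant $\mathcal{U}^c$. Passing from local to global holomorphicity on $\mathcal{U}$ requires verifying that these ambiguities cancel exactly in the full sum $\int_{\x_\gamma}S_{5,\hat j}^\Delta$; this cancellation is the integrated form of the tame-symbol identities underlying Proposition~\ref{prop telescope}, and once established yields the desired (single-valued) complex analytic dependence on $\x\in\mathcal{U}$.
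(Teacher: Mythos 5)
Your proposal takes a genuinely different route from the paper, but as written it has a real gap.

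The paper proves Lemma \ref{lem 7a} by Morera's theorem: it restricts $\tilde{\mathscr{I}}_{j}$ to an arbitrary $\PP^{1}\subset Gr(3,7)$, checks $\oint_{\gamma}\tilde{\mathscr{I}}_{j}\,dt=0$ around a small loop, and does so by pulling back to the total space and applying Stokes via Proposition \ref{prop telescope}, reducing to residue integrals that all vanish by Hodge type or equisingularity. This works directly with the single-valued quantity $\tilde{\mathscr{I}}_{j}$ and never splits it into termwise summands.

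By contrast, your termwise approach introduces two problems. First, the interpretation of each $I_\ell$ as a ``pairing of a holomorphic section of $\mathcal{H}^{5-\ell}(\x_\gamma,D_\gamma)$ against a flat relative homology class'' is not correct: the integrand $\log f_\ell\bigwedge_{m>\ell}\text{dlog}\,f_m$ is not closed in $y$ (its differential is $\text{dlog}\,f_\ell\wedge\cdots\wedge\text{dlog}\,f_5\ne 0$), and the chain $T_{f_1}\cap\cdots\cap T_{f_{\ell-1}}\cap\x_\gamma$ has nontrivial boundary; so this is not a relative de Rham / singular homology pairing, and the usual argument for holomorphicity of period integrals does not apply directly. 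Second---and more seriously---you yourself identify ``the main obstacle'' as the cancellation of $2\pi i\ZZ$-multiples of lower-weight integrals arising from branch choices as $\gamma$ circles components of $\mathcal{U}^c$, and then assert that ``once established'' this yields the result. But establishing that cancellation is exactly the content of the lemma; deferring it leaves the argument incomplete. The Morera route in the paper is not an optional shortcut but precisely the device that lets one avoid confronting the termwise monodromy at all, because the contour integral is computed using Stokes on the full (non-termwise) current $S_{5,\hat j}^\Delta$, whose exterior derivative is controlled by Proposition \ref{prop telescope}. If you want to salvage a termwise argument, you would need to carry out the cancellation explicitly, which amounts to re-deriving the content of the telescoping identity in disguise.
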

With this relaxed notion, the projectivized row space $\x_{a,b,c}(\cong\PP^{2})$
of \begin{equation}\label{6eq5}
\begin{array}{c} 1\\ x\\ y \end{array}\left(\begin{array}{ccccccc} 1 & 0 & c & -1 & 0 & 0 & 1\\ a & 1 & 0 & -1 & 0 & 1 & 0\\ 0 & b & 1 & -1 & 1 & 0 & 0 \end{array}\right)
\end{equation}is general in $\PP^{6}$ for sufficiently general $(a,b,c)\in\CC^{3}$.
By Lemma \ref{lem 7a}, the seven integrals\begin{equation}\label{6eq6}
\mathscr{I}_j(a,b,c):=\frac{1}{(2\pi i)^2}\int_{\x_{a,b,c}}S^{\Delta}_{5,\hat{j}}\;,\;\;\; j=0,\ldots,6
\end{equation}are each analytic on the complement $U_{j}\subset\CC^{3}$ of some
real codimension-1 subset. (This is just the locus where the projection
of $\x_{a,b,c}$ to $\PP_{\hat{j}}^{5}$ is general.) Since we do
not know if $\cap U_{j}\subseteq U:=\mathcal{U}\cap\CC^{3}$ is connected,
and we prefer to evaluate the $\mathscr{I}_{j}$ in different regions,
we have to consider the ``jumps'' in the $\mathscr{I}_{j}$ as we
cross over $\CC^{3}\backslash U_{j}$.
\begin{lem}
\label{lem 7b} The jumps in the $\{\mathscr{I}_{j}\}$ (across real
codimension-1 components of $\CC^{3}\backslash U_{j}$) are degenerate.
\end{lem}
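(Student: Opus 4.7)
My plan is to show that each wall-crossing discontinuity of $\mathscr{I}_j(a,b,c)$ is, up to a $2\pi i$-constant, a weight-$2$ period integral on a $\PP^1$ inside $\x_{a,b,c}$, and hence (by the analysis of $\S 5$) evaluates to a $\QQ$-combination of $Li_2$'s and products of logarithms at rational-function arguments of $(a,b,c)$---which is precisely the definition of degenerate.

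First I would decompose $\imath_{\x_{a,b,c}}^{*} S_{5,\hat{j}}^{\Delta}$ termwise via \eqref{eqnRn}. Each summand has the schematic form
\[
c_{\ell}\, \log(g_{\ell})\, \text{dlog}(g_{\ell+1}) \wedge \cdots \wedge \text{dlog}(g_{5}) \cdot \delta_{T_{g_1}\cap \cdots \cap T_{g_{\ell-1}}},
\]
with each $g_k$ a ratio of linear forms in the entries of \eqref{6eq5}, hence a rational function on $\x_{a,b,c}$ with coefficients polynomial in $(a,b,c)$. Only the indices $\ell\geq 3$ contribute (lower $\ell$'s give $(p,0)$-forms with $p\geq 3$ on a complex $2$-fold), and each surviving summand is the integral of a form with logarithmic singularities along $\{g_{\ell}=0\}$ over a real chain $T_{g_1}\cap\cdots\cap T_{g_{\ell-1}}\cap\x_{a,b,c}$, analytic in $(a,b,c)\in U_j$ so long as the combinatorial arrangement of the curves $\{g_k=0\}\subset\x_{a,b,c}$ is constant.

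Across a real codimension-one wall---locally cut out by $\mathrm{Im}(\phi)=0$ for some cross-ratio $\phi$ of the linear forms in $(a,b,c)$---the arrangement changes: either the branch-cut locus $T_{g_{\ell}}$ slides across a boundary component $\{g_k=0\}$ of one of the $T$'s, or two such boundaries cross each other. In either case, by a standard variation-of-chains argument (Stokes's theorem applied to the swept region), the jump equals $2\pi i$ times an integral of a current of strictly lower weight---namely a $(\log\cdot\text{dlog})$- or $(\log\cdot\log)$-template---over a real $1$- or $0$-dimensional sub-chain inside the $\PP^1$ cut out by $\{g_k=0\}\cap\x_{a,b,c}$. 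The weight-drop occurs because the $\log(g_{\ell})$ factor is \emph{discharged} into a $2\pi i$-constant under crossing and because the chain itself drops one real dimension.

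Such weight-$2$ integrals on $\PP^1$ are exactly those computed in $\S 5$: by the $\mathscr{L}\{0123\}$ formula there, they evaluate to $\QQ$-combinations of constants, products $\log(\phi')\log(\psi')$, and $Li_2(\phi')$, with $\phi',\psi'$ cross-ratios of linear forms in the relevant rows of \eqref{6eq5}---hence rational functions of $(a,b,c)$. This gives the desired degeneracy. The main obstacle is the combinatorial bookkeeping: $\mathscr{I}_j$ has several summands, each with multiple chain factors, and $\CC^3\setminus U_j$ has many real codimension-one components, so one must enumerate wall-types and verify uniformly that each produces only a weight-$\leq 2$ residual integral. Structurally this is guaranteed because every summand of $S_5^{\Delta}$ carries exactly one $\log$ factor, so the $\log$-discharge mechanism can only reduce weight; I would confirm this by writing out one representative wall in detail---e.g.\ the locus where two of the hyperplane sections $\y_i=\x_{a,b,c}\cdot(X_i)$ acquire a real tangency.
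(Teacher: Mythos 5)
Your proposal arrives at the right conclusion by a route that differs from the paper's in an instructive way. The paper does not decompose $S_{5,\hat\jmath}^{\Delta}$ termwise and track how the real arrangement of chains and branch cuts changes across walls; instead it pulls $S_{5,\hat\jmath}^{\Delta}$ back to the total space $\mathcal{X}\to\CC^3\subset Gr(3,7)$ and writes $\mathscr{I}_j(p)-\mathscr{I}_j(q)=\int_{\pi^{-1}(\vec{qp})}d\!\left[\tfrac{S^{\Delta}_{5,\hat\jmath}}{(2\pi i)^2}\right]$, then applies the telescoping identity (Proposition \ref{prop telescope}) wholesale to convert this into $\sum_{j'\neq j}(\pm1)\int_{\pi^{-1}(\vec{qp})}(\rho_{j'})_*\tfrac{S^{\Delta}_{4,\widehat{jj'}}}{2\pi i}$ modulo $\QQ(3)$. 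The jump then comes only from the $\delta_T$-terms of $S^{\Delta}_{4}$ lying over the crossed wall, which yields a priori the normal forms $(2\pi i)\int_{\PP^1}\log\cdot\text{dlog}\cdot\delta_T$ and $(2\pi i)^2\int_{\PP^1}\log\cdot\delta_T\cdot\delta_T$; these are immediately degenerate. Your variation-of-chains picture captures the same phenomenon (both arguments are Stokes's theorem plus a weight drop to $\leq 2$ on a $\PP^1$), but the telescoping formula makes the bookkeeping automatic that you would otherwise have to do by hand---enumerating the wall types, deciding whether each involves a sliding branch cut or a chain collision, and checking that no higher-weight residual survives. Two small imprecisions in your write-up: the residual integrals are of the types $\log\cdot\text{dlog}\cdot\delta_T$ and $\log\cdot\delta_T\cdot\delta_T$, not ``$\log\cdot\log$'' (which does not appear as a separate template in the paper's analysis); and the ``$\log$-discharge into $2\pi i$'' mechanism and the chain-collision mechanism are distinct effects that your prose conflates somewhat, whereas the telescoping identity handles both uniformly. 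With those points tightened, and with the combinatorial enumeration you correctly flag as ``the main obstacle'' actually carried out, your approach would reach the same result; the paper's route is simply more economical.
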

Lemmas \ref{lem 7a} and \ref{lem 7b} are proved in the first appendix
to this section.

The upshot of this discussion is that we have \begin{equation}\label{6eq7}
\sum_{i=0}^6 (-1)^j \mathscr{I}_j \equiv 0
\end{equation}modulo degenerates, and that (in \eqref{6eq7}) we may evaluate each
$\mathscr{I}_{j}$ anywhere in $U_{j}$ and analytically continue
the results to a common neighborhood in $U$. To apply Reciprocity
Law B in this form, we shall begin by choosing real subloci $\mathcal{A}_{j}\subset U_{j}\cap\mathbb{R}^{3}$
on which the integrand $\mathscr{I}_{j}$ has only one nonvanishing
term:\begin{flalign*}
\mathcal{A}_0 := & \left\{ a\in\RR ;\, b\in (\tfrac{1}{2},1) ;\, c\in(\tfrac{1}{1-b},\infty) \right\} &
\\
\mathcal{A}_1 := & \left\{ a\in(0,\tfrac{1}{2}) ;\, b\in\RR ;\, c\in(-\infty,1-\tfrac{1}{a}) \right\} &
\\
\mathcal{A}_2 := & \left\{ a\in(0,1) ;\, b\in(1,\tfrac{1}{1-a}) ;\, c\in\RR \right\} &
\\
\mathcal{A}_3 =  \cdots = \mathcal{A}_6 :=& \left\{ a,b,c\in\RR_{<0} ;\, |abc|>1 \right\} .&
\end{flalign*}For example, $\mathscr{I}_{0}$ is the integral (on $\PP^{2}$) of
\[
\tfrac{1}{(2\pi i)^{2}}S_{5}^{\Delta}(\begin{array}[t]{cccccc}
x+by: & c+y: & -(1+x+y): & y: & x: & 1\\
X_{1} & X_{2} & X_{3} & X_{4} & X_{5} & X_{6}
\end{array})\;=
\]
\begin{multline*}
\log\left(\tfrac{y}{1+x+y}\right)\text{dlog}\left(\tfrac{-x}{y}\right)\wedge\text{dlog}\left(\tfrac{-1}{x}\right)\cdot\delta_{T_{-\frac{c+y}{x+by}}\cap T_{\frac{1+x+y}{c+y}}}
\\
+ \{\cdots\}\cdot\delta_{\boxed{T_{-\frac{c+y}{x+by}}\cap T_{\frac{1+x+y}{c+y}}\cap T_{\frac{y}{1+x+y}}}}
\end{multline*}and the boxed intersection is empty on $\mathcal{A}_{0}$. 

More uniformly, writing\begin{flalign*}
\tau_0:=T_{-\frac{c+y}{x+by}}\cap T_{\frac{x+y+1}{c+y}}, 
\\
\tau_1:=T_{-\frac{c+y}{1+ax}}\cap T_{\frac{x+y+1}{c+y}}, 
\\
\tau_2:=T_{-\frac{x+by}{1+ax}}\cap T_{\frac{x+y+1}{x+by}}, 
\\
\tau_3=\cdots =\tau_6:=T_{-\frac{x+by}{1+ax}}\cap T_{-\frac{c+y}{x+by}},
\end{flalign*}we have that\begin{flalign*}
\tau_i \cap T_{\frac{y}{x+y+1}} = \emptyset & \text{ on }\mathcal{A}_i\;\;(i=0,1,2)
\\
\tau_3 \cap T_{\frac{-y}{c+y}} = \emptyset & \text{ on }\mathcal{A}_3
  \\
\tau_i \cap T_{\frac{y}{x+y+1}} = \emptyset & \text{ on }\mathcal{A}_i\;\;(i=4,5,6).
\end{flalign*}Hence the $\mathscr{I}_{j}$ are integrals of $\log(\cdot)\text{dlog}(\cdot)\wedge\text{dlog}(\cdot)$-forms
on the (positively-oriented)
regions:
\[\includegraphics[scale=0.7]{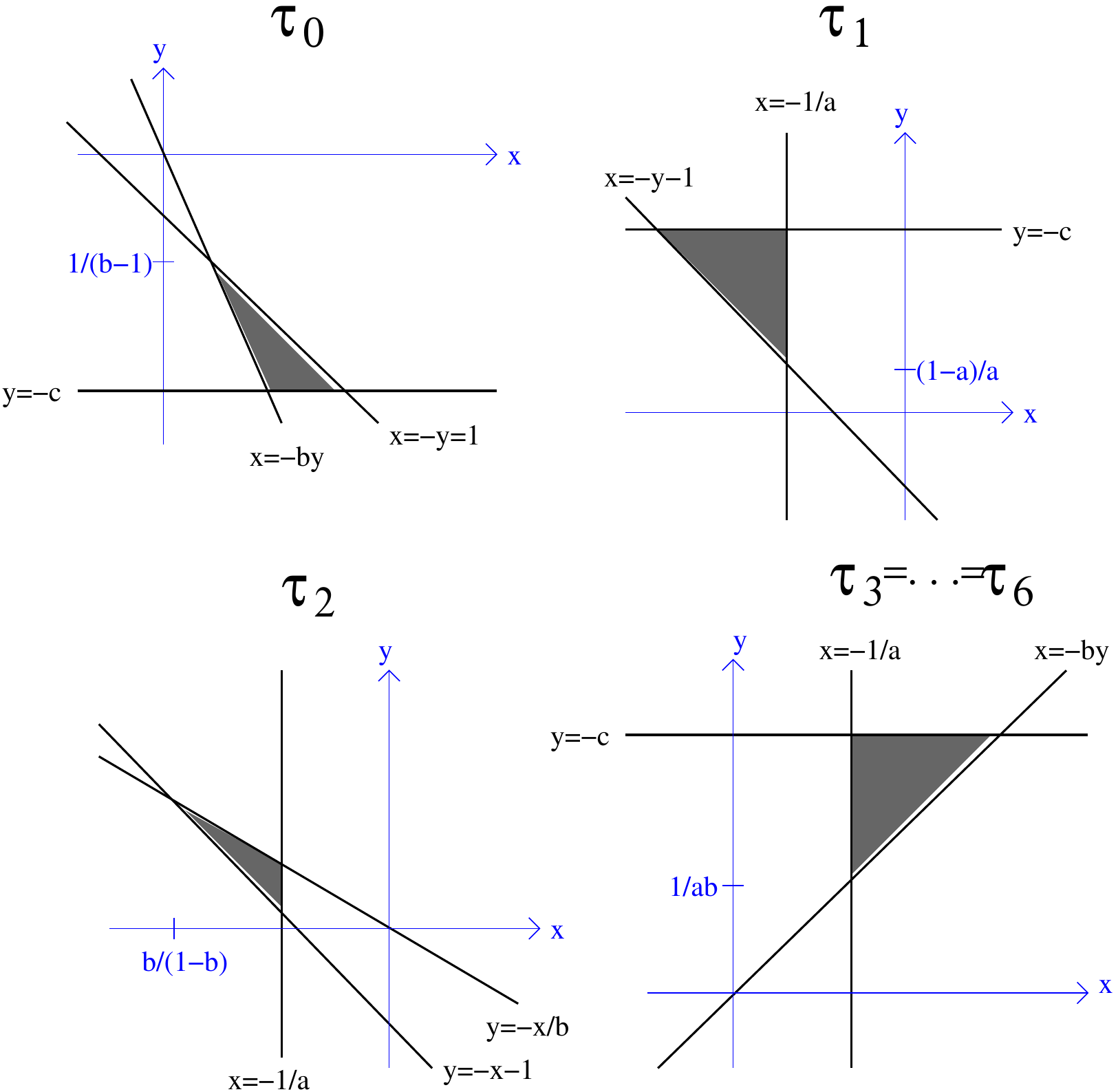}\]
Namely, 
we have \begin{equation}\label{6eq8}
\;\mathscr{I}_0 = -\int_{y=-c}^{\frac{1}{b-1}}\int_{x=-by}^{-y-1}\frac{\log(y)-\log(x+y+1)}{xy} dx\,dy,
\end{equation}\begin{equation}
-\mathscr{I}_1 = \int_{y=\frac{1-a}{a}}^{-c}\int_{x=-y-1}^{-\frac{1}{a}}\frac{\log(y)-\log(x+y+1)}{xy}dx\,dy,
\end{equation}\begin{equation}
\mathscr{I}_2 = -\int_{x=\frac{b}{1-b}}^{-\frac{1}{a}}\int_{y=-x-1}^{-\frac{x}{b}}\frac{\log(y)-\log(x+y+1)}{xy}dx\,dy,
\end{equation} and
\[
-\mathscr{I}_{3}+\mathscr{I}_{4}-\mathscr{I}_{5}+\mathscr{I}_{6}=\int_{\tau_{3}}\begin{array}[t]{c}
\left\{ -\log\left(\tfrac{-y}{c+y}\right)\text{dlog}\left(\tfrac{-x}{y}\right)\wedge\text{dlog}\left(\tfrac{-1}{x}\right)\right.\\
+\log\left(\tfrac{x+y+1}{c+y}\right)\text{dlog}\left(\tfrac{x}{x+y+1}\right)\wedge\text{dlog}\left(\tfrac{-1}{x}\right)\\
-\log\left(\tfrac{x+y+1}{c+y}\right)\text{dlog}\left(\tfrac{y}{x+y+1}\right)\wedge\text{dlog}\left(\tfrac{-1}{y}\right)\\
\left.+\log\left(\tfrac{x+y+1}{c+y}\right)\text{dlog}\left(\tfrac{y}{x+y+1}\right)\wedge\text{dlog}\left(\tfrac{-x}{y}\right)\right\} 
\end{array}
\]
\begin{equation}\label{6eq11}
=\int_{y=\frac{1}{ab}}^{-c}\int_{x=-\frac{1}{a}}^{-by}\frac{\log(y)-\log(x+y+1)}{xy}dx\,dy.
\end{equation}Evaluating these integrals as described in the second appendix below
yields (mod degenerates)
\[
\mathscr{I}_{0}\equiv[b]-[1-b]+2[c]+[1-c]-\left[\tfrac{bc-c+1}{b}\right]+[bc-c+1]+\left[\tfrac{bc-c+1}{bc}\right]-[1]
\]
\[
-\mathscr{I}_{1}\equiv[a]-[1-c]-\left[\tfrac{ac-a+1}{c}\right]+\left[\tfrac{ac-a+1}{ac}\right]+[ac-a+1]
\]
\[
\mathscr{I}_{2}\equiv[a]-\left[1-\tfrac{1}{b}\right]-\left[\tfrac{ab-b+1}{a}\right]+\left[\tfrac{ab-b+1}{ab}\right]+[ab-b+1]
\]
\[
\left.\begin{array}{c}
-\mathscr{I}_{3}+\mathscr{I}_{4}\\
-\mathscr{I}_{5}+\mathscr{I}_{6}
\end{array}\right\} \equiv\begin{array}[t]{c}
2[-ab]-2[c]-\left[\tfrac{ab-b+1}{ab^{2}}\right]+\left[\tfrac{ab-b+1}{ab}\right]+2\left[\tfrac{ab-b+1}{-b}\right]\\
+\left[\tfrac{bc-c+1}{b}\right]-[bc-c+1]-\left[\tfrac{bc-c+1}{bc}\right]-[a(ab-b+1)]\\
+[ab-b+1]+\left[\tfrac{ac-a+1}{c}\right]-\left[\tfrac{ac-a+1}{ac}\right]-[ac-a+1].
\end{array}
\]
Adding these and making use of \eqref{6eq4}, all the terms involving
$c$ cancel and we have 
\[
0\equiv\begin{array}[t]{c}
2[a]+2[b]+2[-ab]-2[1]+2\left[\tfrac{ab-b+1}{ab}\right]+2\left[\tfrac{ab-b+1}{-b}\right]\\
+2[ab-b+1]-\left[\tfrac{ab-b+1}{a}\right]-\left[\tfrac{ab-b+1}{ab^{2}}\right]-[a(ab-b+1)],
\end{array}
\]
which recovers the $Li_{3}$ terms in \eqref{6eq1}.
\begin{rem}
(a) If we take $(a,b,c)$ equal in \eqref{6eq8}-\eqref{6eq11}, they
are just the integrals of $\log\left(\tfrac{y}{x+y+1}\right)\tfrac{dx}{x}\wedge\tfrac{dy}{y}$
over a sum of four \emph{canceling} triangles. This gives a quicker
proof of \eqref{6eq1}, but of course there is something to the fact
that Reciprocity Law B produces the right combination of triangles.

(b) For fixed $c$, \eqref{6eq5} gives a map $G$ from $U\subset\CC^{2}$
to $Gr(3,7)$ analogous to $g$ in Remark \ref{remark !}. In contrast
to the Bol 5-web situation, there is clearly no nice relationship
between the leaves of the Kummer-Spence 9-web \cite{Pi} and the $\int_{[G(a,b)]}S_{5,\hat{j}}^{\Delta}$
integrals (of which there are only $7$). One could still ask whether
the functions $a$, $b$, $-ab$, $\tfrac{ab-b+1}{ab}$, $\tfrac{ab-b+1}{-b}$,
$ab-b+1$, $\tfrac{ab-b+1}{a}$, $\tfrac{ab-b+1}{ab^{2}}$, $a(ab-b+1)$
are $G$-pullbacks of some natural functions on $Gr(3,7)$, perhaps
related to the higher cross-ratios (of $6$ points on $\PP^{2}$)
of Goncharov \cite{Go4}.

(c) The cancellation of all terms involving $c$ was a surprise to
the authors. We do expect that some variant of \eqref{6eq5} should
lead to a similar proof of Goncharov's 22-term relation \cite{Go3},
but leave this as a problem for others.
\end{rem}

\subsection*{Appendix I to $\S6$: Proof of Lemmas \ref{lem 7a} and \ref{lem 7b}}

Write $\mathcal{U}_{j}\subset Gr(3,7)$ for the region on which the
projection of $\x$ to $\PP_{\hat{j}}^{5}$ is general in the weaker
sense. Note that the $\tilde{\mathscr{I}}_{j}(\x):=\int_{\x}S_{5,\hat{j}}^{\Delta}$
are equisingular hence continuous for $\x\in\mathcal{U}_{j}$.%
\footnote{We don\textquoteright{}t need to worry about the properness of intersection
$\x\cap T_{-\frac{X_{1}}{X_{0}}}$ because the terms $\int_{\x\cap T}\log\cdot\text{dlog}\wedge\text{dlog }\wedge\text{dlog}$
vanish by Hodge type.%
} We will show that the restriction of $\tilde{\mathscr{I}}_{j}$ to
$P\cap\mathcal{U}_{j}$ is holomorphic, for $P\subset Gr(3,7)$ an
arbitrary $\PP^{1}$ (with coordinate $t$). Let $\mu\subset P\cap\mathcal{U}_{j}$
be a small disk with boundary $\del\mu=:\gamma$. By Morera's theorem,
it will suffice to check that $\oint_{\x\in\gamma}\tilde{\mathscr{I}}_{j}(\x)dt=0$.

Pulling $S_{5,\hat{j}}^{\Delta}$ back to the total space $\cup_{\x\in\mathcal{U}_{j}}\x=:\tilde{\mathcal{X}}_{j}\overset{\tilde{\pi}_{j}}{\to}\mathcal{U}_{j}$,
we compute using Proposition \ref{prop telescope}
\[
\int_{\pi^{-1}(\gamma)}\tfrac{S_{5,\hat{j}}^{\Delta}}{(2\pi i)^{2}}\wedge dt=\int_{\pi^{-1}(\mu)}d\left[\tfrac{S_{5,\hat{j}}^{\Delta}}{(2\pi i)^{2}}\right]\wedge dt
\]
\begin{equation} \label{appIIeq}
= \sum_{j'\neq j}(\pm 1)\int_{\pi^{-1}(\mu )} (\rho_j ')_* \tfrac{S^{\Delta}_{4,\widehat{jj'}}}{2\pi i} \wedge dt .
\end{equation}(Here we also use the fact that equisingularity $\implies$ $T_{5,\hat{j}}^{\Delta}\cap\pi^{-1}(\mu)=\emptyset$.)
The terms of \eqref{appIIeq} take the form
\[
\tfrac{1}{2\pi i}\int_{\mu\times\PP^{1}}R_{4}\left(-\tfrac{f_{1}}{f_{0}},-\tfrac{f_{2}}{f_{1}},-\tfrac{f_{3}}{f_{2}},-\tfrac{f_{4}}{f_{3}}\right)\wedge dt,
\]
where the $f_{i}$ are linear forms algebraic in $t\in\mu$ (and the
$\PP^{1}$ corresponds to $\x\cap\PP_{\hat{j}}^{5}$). By Hodge type,
the $\int_{\mu\times\PP^{1}}\log\cdot\text{dlog}\wedge\text{dlog}\wedge\text{dlog}\wedge dt$
and $\int_{\mu\times\PP^{1}\cap T}\log\cdot\text{dlog}\wedge\text{dlog}\wedge dt$
terms vanish; while the $\mu\times\PP^{1}\cap T\cap T\cap T$ vanish
by equisingularity. In fact, the $\mu\times\PP^{1}\cap T_{-\frac{f_{1}}{f_{0}}}\cap T_{-\frac{f_{2}}{f_{1}}}$
vanish also, by equisingularity and linearity of the $f_{i}$ (so
that $T_{-\frac{f_{1}}{f_{0}}}\cap\x\cap\PP_{\hat{j}}^{5}$ and $T_{-\frac{f_{2}}{f_{1}}}\cap\x\cap\PP_{\hat{j}}^{5}$
are open segments in $\PP^{1}\cong\x\cap\PP_{\hat{j}}^{5}$ meeting
only at an endpoint). Hence \eqref{appIIeq} is zero and Lemma \ref{lem 7a}
is proved.

The proof of Lemma \ref{lem 7b} is similar. Pulling $S_{5,\hat{j}}^{\Delta}$
back to the total space $\cup\x_{a,b,c}:=\mathcal{X}\overset{\pi}{\to}\CC^{3}\subset Gr(3,7)$,
we have for $p,q\in U_{j}$
\[
\int_{\pi^{-1}(p)}\tfrac{S_{5,\hat{j}}^{\Delta}}{(2\pi i)^{2}}-\int_{\pi^{-1}(q)}\tfrac{S_{5,\hat{j}}^{\Delta}}{(2\pi i)^{2}}=\int_{\pi^{-1}(\vec{qp})}d\left[\tfrac{S_{5,\hat{j}}^{\Delta}}{(2\pi i)^{2}}\right]
\]
\[
\underset{\QQ(3)}{\equiv}\sum_{j'\neq j}(\pm1)\int_{\pi^{-1}(\vec{qp})}(\rho_{j'})_{*}\tfrac{S_{4,\widehat{jj'}}^{\Delta}}{2\pi i}
\]
by Proposition \ref{prop telescope}. The only possible contributions
to a jump arise when a $\delta_{T}$-term in $S_{4}^{\Delta}$ lies
over a component of $\CC^{3}\backslash U_{j}$ crossed by $\vec{qp}$,
and then the contribution is a combination of $(2\pi i)\int_{\PP^{1}}\log\cdot\text{dlog}\cdot\delta_{T}$
and $(2\pi i)^{2}\int_{\PP^{1}}\log\cdot\delta_{T}\cdot\delta_{T}$
integrals, which are obviously degenerate.

\subsection*{Appendix II to $\S6$: Evaluating \eqref{6eq8}-\eqref{6eq11}}

In the course of the computation, we must frequently evaluate integrals
of the form \begin{equation}\label{6eq12}
\int\frac{\log(a-x)\log(b-x)}{x}dx,
\end{equation}\begin{equation}\label{6eq13}
\int\Li_2\left(\frac{1}{a(1+x)}\right)\frac{dx}{x}.
\end{equation}Begin by rewriting \eqref{6eq12} as \begin{equation}\label{6eq14}
\int\tfrac{\log^{2}(a-x)+\log^{2}(b-x)}{2x}dx-\int\tfrac{\log^{2}\left(\tfrac{a-x}{b-x}\right)}{2x}dx.
\end{equation}The first integral may be done by parts twice (e.g. $u=\log^{2}(a-x)$
and $dv=\tfrac{dx}{x}$; then substitute $t=a-x$ and take $u=\log t$,
$dv=\log(a-t)\tfrac{dt}{t}=(\log a)\tfrac{dt}{t}-d(\Li_{2}(\tfrac{t}{a}))$),
which yields\begin{multline*}
-\Li_{3}\left(1-\tfrac{x}{a}\right)+\Li_{2}\left(1-\tfrac{x}{a}\right)\log(a-x)+\tfrac{1}{2}\log(\tfrac{x}{a})\log^{2}(a-x)
\\
-\Li_{3}\left(1-\tfrac{x}{b}\right)+\Li_{2}\left(1-\tfrac{x}{b}\right)\log(b-x)+\tfrac{1}{2}\log(\tfrac{x}{b})\log^{2}(b-x)
\end{multline*}For the second integral in \eqref{6eq14}, substituting $y=\frac{a-x}{b-x}$
gives 
\[
-\tfrac{1}{2}(a-b)\int\tfrac{\log^{2}(y)}{(1-y)(a-yb)}dy,
\]
whereupon repeated integration by parts (starting with $u=\log^{2}y$,
$dv=dy/((1-y)(a-yb))$) yields\begin{multline*}
-\Li_{3}(y)+\Li_{3}(\tfrac{by}{a})-\log(y)\Li_{2}(\tfrac{by}{a})
\\
-\tfrac{1}{2}\log^{2}(y)\log(1-\tfrac{yb}{a})+\Li_{2}(y)\log(y)+\tfrac{1}{2}\log(1-y)\log^{2}(y).
\end{multline*}The $\Li_{3}$ terms from \eqref{6eq12} are therefore
\[
\left[\tfrac{b(a-x)}{a(b-x)}\right]-\left[\tfrac{a-x}{b-x}\right]-\left[1-\tfrac{x}{a}\right]-\left[1-\tfrac{x}{b}\right].
\]

For \eqref{6eq13}, taking $u=\Li_{2}\left(\tfrac{1}{a(1+x)}\right)$
and $dv=\tfrac{dx}{x}$ gives 
\[
\log(x)\Li_{2}\left(\tfrac{1}{1+x}\right)-\int\tfrac{\log\left(\tfrac{1}{x+1}\right)\log(x)}{x+1}
\]
whereupon substituting $t=x+1$ puts the last integral in the form
\eqref{6eq12}. This yields \eqref{6eq13}$\equiv$
\[
-\left[\tfrac{ax+a-1}{x}\right]+\left[\tfrac{ax+a-1}{ax}\right]+\left[1-a(x+1)\right]+[-x]+[x+1].
\]

So for example, $\mathscr{I}_{0}$ breaks into
\[
-\int_{y=-c}^{\frac{1}{b-1}}\int_{x=-by}^{-y-1}\tfrac{\log(y)}{xy}dx\, dy\equiv\left[c\right]-\left[\tfrac{1}{1-b}\right],
\]
which is straightforward, and
\[
\int_{y=-c}^{\frac{1}{b-1}}\int_{x=-by}^{-y-1}\tfrac{\log(x+y+1)}{xy}dx\, dy=
\]
\[
\int_{y=-c}^{\frac{1}{b-1}}\tfrac{-\zeta(2)+\log(-1-y)\log(1+y)-\log(-by)\log(1+y)+\Li_{2}\left(\frac{by}{1+y}\right)}{y}dy.
\]
A substitution brings the second and fourth terms of the last integral
into the forms \eqref{6eq12} and \eqref{6eq13} respectively, and
the other two terms are easy.

\section*{Appendix: On a modification of Goncharov's regulator by Jos� Ignacio Burgos-Gil }

As we have seen in Remark \ref{rem Go}, the  map denoted as
$\cP^{\bullet}(n)$ (Goncharov regulator) in \cite{Go2} fails to be a
morphism of complexes,
hence it does not define a regulator map. By contrast, the cubical
version of the same map is a morphism of complexes and does define a
regulator map.

In the paper \cite{BFT} it is proved that the cubical version of
Goncharov  regulator is compatible with Beilinson regulator.
In the same paper it is also stated that the simplicial version of
Goncharov regulator is compatible with Beilinson regulator. The proof in
\emph{loc. cit.} is based on the assumption that Goncharov map is a
morphism of complexes. Since this is not the case, \cite[Theorem
7.12]{BFT} is not true. The aim of this appendix is to show that,
following the ideas of the present paper, one can define a variant of
the simplicial version of Goncharov regulator that is actually a
morphism of complexes and 
that the resulting regulator is compatible with Beilinson's one.

\medskip
Before we continue, a consumer warning. Being this text  a special  inclusion,
most of the arguments are either  sketched or 
just  pointers to the literature where similar arguments are used,
except for the crucial point that the map in question  is a 
morphism of complexes - that  being the main point of this appendix.

\medskip

As in the previous sections we
restrict ourselves to the case of smooth projective schemes.  
We start by fixing  notation. We will use projective coordinates
$(X_{0}:\dots :X_{n})$ in $\PP^{n}$ and projective coordinates
$((Y_{1}:Z_{1}),\dots ,(Y_{n}:Z_{n}))$ of $(\PP^{1})^{n}$. Recall that
$H_{n}\subset \PP^{n}$ denotes the hyperplane of equation
$\sum_{i}X_{i}=0$. We write $\Delta ^{n}=\PP^{n}\setminus H_{n}$ and
let $\Delta $ denote the cosimplicial scheme $(\Delta ^{n})_{n\ge 0}$ with
the usual faces and degeneracies. We denote
$\square^{1}=\PP^{1}\setminus \{(1:1)\}$ and $\square^{n}=(\square
^{1})^{n}$. These schemes form a cocubical scheme $\square=(\square
^{n})_{n\ge 0}$. 

Recall from from \cite{BKK} that to each Dolbeault complex $A^{\ast}$ we can
associate a Deligne complex $\mathscr{D}_{A}^{\ast}(\ast)$ and if $A$
is a Dolbeault algebra, then $\mathscr{D}_{A}^{\ast}(\ast)$ has an
algebra structure that is associative up to homotopy and
commutative. The main examples are:
\begin{itemize}
\item $A=E^{\ast}(\x)$, the complex of smooth complex valued
  differential forms on a smooth complex variety $\x$. This is a
  Dolbeault algebra and the corresponding Deligne algebra is denoted
  $\mathscr{D}^{\ast}(\x,\ast)$.
\item $A=D^{\ast}(\x)$, the complex of currents on $\x$. This is a
  Dolbeault complex and the corresponding Deligne complex is
  denoted $\mathscr{D}_{D}^{\ast}(\x,\ast)$. 
\end{itemize}

If $\x$ is equidimensional of dimension $d$,
the current associated to every differential form gives a
quasi-isomorphism of Deligne complexes
\begin{equation}
  \label{eq:14}
\mathscr{D}^*(\x,p)\xrightarrow{[\cdot]}
\mathscr{D}_{D}^*(\x,p),\qquad \alpha \mapsto [\alpha],
\end{equation}
where $[\alpha ]$ is the current
\begin{displaymath}
  [\alpha ](\omega )=\frac{1}{(2\pi i)^{d}} \int_{X}\omega \land \alpha.
\end{displaymath}

In \cite{Bu} it is proved that, when $\x$ is projective, the complexes
$\mathscr{D}^{\ast}(\x,\ast)$ and  $\mathscr{D}_{D}^{\ast}(\x,\ast)$
compute Deligne cohomology of $\x$. Moreover, there are explicit
formulas for the  differential $d_{\mathscr{D}}$, for the product $\bullet$ on the
Deligne algebra and for homotopy equivalences between these complexes and a
real variant of the complex $C_{\mathcal{D}}^{\bullet}(\x;\QQ(p))$ of
Section \ref{sec:abel-jacobi-maps}.

If $u_{1},\dots,u_{m}$ are smooth functions on an open subset of a
complex variety, following \cite{Wa}, for $i=1,\dots,m$, we
write
\begin{multline} \label{eq:12}
  \cS_{m}^{i}(u_{1},\dots,u_{m})=\\(-2)^{m}\sum_{\sigma \in \mathfrak S_m}
(-1)^{|\sigma|}u_{\sigma (1)}\partial u_{\sigma (2)}\land\dots\land
\partial u_{\sigma (i)}\land \bar \partial u_{\sigma
  (i+1)}\land\dots\land \bar \partial u_{\sigma({m})},
\end{multline}
and
\begin{equation} \label{eq:13}
  T_{m}(u_{1},\dots,u_{m})=
  \frac{1}{2m!}\sum_{i=1}^{m}(-1)^{i}\cS_{m}^{i}(u_{1},\dots u_{m}).
\end{equation}
We also write $T_{0}=1$.

If $f_1,\dots,f_n\in \CC(\x)^{\times}$ are rational functions, we write
\begin{displaymath}
  \cT_{m}(f_{1},\dots,f_{m})=T_{m}\Big(\frac{-1}{2}\log f_{1}\overline
  f_{1},\dots ,\frac{-1}{2}\log f_{n}\overline f_{n}\Big) 
\end{displaymath}
Since $\cT_m$ is a multilinear alternate function,
$\cT_m(f_{1},\dots,f_{m})$ only depends on the class
\begin{displaymath}
  f_1\land\dots\land f_m\in \Lambda ^{m} \CC(\x)^{\times}.
\end{displaymath}
Hence we use also the notation
\begin{displaymath}
  \cT_m(f_{1}\land\dots\land f_{m})=\cT_m(f_{1},\dots,f_{m}).
\end{displaymath}

Thanks to the fact that $\cT_{m}$ is multilinear and alternate, the
differential form $\cT_m(f_{1},\dots,f_{m})$ is
always locally integrable as a singular form on $\x$. In fact, in
order for a non-locally  integrable term to appear in the expansion of
$\cT_m$ we need that the divisors of the functions $f_i$ have common
components. But this case can always be avoided using multilinearity
and antisymmetry. We denote by $[\cT_m(f_{1},\dots,f_{m})]$ the
associated current on $\x$. 

Recall that, if $Z\subset \x$ is a codimension one point of $X$, the
valuation of $Z$ induces a map $\Res_{Z}\colon \Lambda ^{m}
\CC(\x)^{\times} \to \Lambda ^{m-1} \CC(Z)^{\times}$. For instance, if
\begin{displaymath}
  \ord_{Z}(f_{2})=\dots = \ord_{Z}(f_{m})=0,
\end{displaymath}
then
\begin{displaymath}
  \Res_{Z}(f_{1}\land\dots\land
  f_{m})=\ord_{Z}(f_{1})(f_{2}\land\dots\land f_{m})|_{Z}. 
\end{displaymath}
This property and the fact that it is multilinear and alternate
determine $\Res_{Z}$.  

We now define the current $([\cT_{m-1}]\circ \Res)(f_1\land\dots\land
f_m)$ on $\x$ by 
\begin{displaymath}
([\cT_{m-1}]\circ \Res )(f_1\land\dots\land f_m)=
\sum_{Z\in \x^{(1)}}(\iota_{Z})_{\ast}[
\cT_{m-1}\left(\Res_Z(f_1\land\dots\land f_m)\right)],  
\end{displaymath}
where $\x^{(1)}$
is the set of irreducible closed
subvarieties of codimension one on $\x$ and
$\iota_{Z}\colon \widetilde Z\longrightarrow \x$ is the composition of a
resolution of singularities of $Z$ with the natural map to $X$.

The Deligne differential of $\cT_m(f_1\land\dots\land f_m)$ is described
as follows (\cite[Proposition 2.8]{Go2}, and \cite[Proposition
5.16]{BFT}): 

\begin{prop}\label{prop:dif}
Let $f_{1},\dots,f_{m}\in \mathbb{C}(\x)^{\times}$. Then,
as differential forms on $\x$ with logarithmic singularities
\begin{equation}
  \label{eq:6}
  d_{\mathscr{D}}\cT_{m}(f_{1}\land\dots\land f_{m})=0.
\end{equation}
As currents on $X$ 
\begin{equation}\label{eq:8}
  d_{\mathscr{D}}[\cT_{m}(f_{1}\land\dots\land f_{m})]=
  -([\cT_{m-1}]\circ \Res)(f_{1}\land\dots\land f_{m}).
\end{equation}
\end{prop}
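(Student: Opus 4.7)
The plan is to verify both formulas locally and reduce to a single residue computation via the multilinearity and antisymmetry of $\cT_m$, following the pattern of the cited references \cite[Prop.~2.8]{Go2} and \cite[Prop.~5.16]{BFT} but organized so that the algebraic content (residue along a component) is separated from the analytic content (Poincar\'e-Lelong).

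For \eqref{eq:6}, restrict to the Zariski open $U:=\x\setminus \bigcup_{i}|\!\div(f_{i})|$, on which each $u_{i}:=-\tfrac{1}{2}\log(f_{i}\bar f_{i})$ is smooth and satisfies $\partial\bar\partial u_{i}=0$. The explicit formula for $d_{\mathscr{D}}$ on $\mathscr{D}^{\ast}(\x,\ast)$ given in \cite{BKK,Bu} involves only $\partial$, $\bar\partial$, and projection onto real-weight subspaces. One then computes $d_{\mathscr{D}}T_{m}(u_{1},\ldots,u_{m})$ directly from \eqref{eq:12}-\eqref{eq:13}: every term produced either contains a factor $\partial\bar\partial u_{i}$ (which vanishes on $U$) or pairs with another term of opposite sign under the permutation symmetry built into $\cS_{m}^{i}$. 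The antisymmetry of the coefficient $(-1)^{|\sigma|}$ and the alternating sum $\sum (-1)^{i}$ conspire to produce the needed cancellation. As this is a purely formal identity of multilinear alternating expressions, it propagates from smooth functions to rational ones on $U$.

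For \eqref{eq:8}, by the first part the current $d_{\mathscr{D}}[\cT_{m}(\underline f)]-[d_{\mathscr{D}}\cT_{m}(\underline f)]$ is supported on the codimension-one locus $\bigcup_{i}|\!\div(f_{i})|$, so equals $d_{\mathscr{D}}[\cT_{m}(\underline f)]$. To evaluate it component-by-component, fix $Z\in \x^{(1)}$; using multilinearity and antisymmetry of the wedge, write $f_{1}\wedge\cdots\wedge f_{m}=(f_{1}'\wedge\cdots\wedge f_{m}')$ where $f_{2}',\ldots,f_{m}'$ are units along $Z$ and $f_{1}'=z^{\ord_{Z}(f_{1})}\!\cdot g$ with $z$ a local equation of $Z$ and $g$ a unit. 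Apply the Poincar\'e-Lelong formula $\partial\bar\partial[\log(z\bar z)]=2\pi i\,\delta_{Z}$ to the single $u_{1}'=-\tfrac{1}{2}\log(f_{1}'\bar f_{1}')$. Inserting this into \eqref{eq:12}-\eqref{eq:13} collapses one wedge factor into $\delta_{Z}$, and the remaining combinatorial expression is exactly (up to the Deligne normalization and a sign) $(\iota_{Z})_{\ast}[\cT_{m-1}(f_{2}'\wedge\cdots\wedge f_{m}'|_{Z})]=(\iota_{Z})_{\ast}[\cT_{m-1}(\Res_{Z}(\underline f))]$. Summing the locally finite contributions over $Z\in \x^{(1)}$ yields \eqref{eq:8}.

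The main obstacle is the combinatorial bookkeeping in the last step: $T_{m}$ is an alternating sum of $m\cdot m!$ monomials, and one must check that performing Poincar\'e-Lelong on a single $u_{i}$ and then symmetrizing reconstructs $T_{m-1}$ with the correct normalization $1/(2(m-1)!)$ and the correct overall sign. The cleanest way is to observe that both sides of \eqref{eq:8} are $\RR$-multilinear and alternating in $(f_{1},\ldots,f_{m})$, so it suffices to verify the identity in the local model where $f_{1}=z$ is a coordinate and $f_{2},\ldots,f_{m}$ are units; in that case only the $i=1$ (or dually $i=m$) term in \eqref{eq:13} contributes a $\delta_{Z}$, and a direct identification with \eqref{eq:12} for $m-1$ closes the argument. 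This model computation is carried out in \cite[Prop.~5.16]{BFT}, whose sign conventions I would adopt verbatim.
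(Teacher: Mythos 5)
The paper gives no proof of Proposition \ref{prop:dif}, deferring entirely to \cite[Prop.~2.8]{Go2} and \cite[Prop.~5.16]{BFT}; your reconstruction is a correct account of the standard argument underlying those citations (closedness of $T_m$ on the smooth locus from $\partial\bar\partial\log|f_i|=0$ plus the built-in antisymmetrization, then Poincar\'e--Lelong along each codimension-one $Z$). Two small imprecisions are worth flagging. First, the multilinearity you appeal to should be $\ZZ$- (equivalently $\QQ$-) multilinearity with respect to the group law on $\CC(\x)^{\times}$, not ``$\RR$-multilinearity''. Second, the claim that $f_1\wedge\cdots\wedge f_m$ can be rewritten as a \emph{single} wedge $f_1'\wedge\cdots\wedge f_m'$ with only $f_1'$ having nonzero order along $Z$ is not a monomial identity: writing $f_i=z^{a_i}g_i$ with $z$ a local equation for $Z$ and $g_i$ units, one gets
\begin{equation*}
f_1\wedge\cdots\wedge f_m \;=\; g_1\wedge\cdots\wedge g_m \;+\; \sum_{i=1}^m(-1)^{i-1}a_i\, z\wedge g_1\wedge\cdots\wedge\widehat{g_i}\wedge\cdots\wedge g_m
\end{equation*}
in $\Lambda^m\CC(\x)^{\times}$, i.e.\ a $\ZZ$-linear combination of such terms. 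Your closing reduction --- since both sides of \eqref{eq:8} are $\ZZ$-multilinear and alternating, verify the local models $(z,g_2,\dots,g_m)$ with the $g_j$ units, and $(g_1,\dots,g_m)$ all units (where both sides vanish near $Z$) --- is exactly the right fix, and the remaining bookkeeping is indeed the content of \cite[Prop.~5.16]{BFT}.
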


Of particular interest for us are the forms
\begin{gather*}
  r_{n}=\cT_{n}\Big(\frac{X_{1}}{X_{0}},\dots,\frac{X_{n}}{X_{0}}\Big),\\
  r'_{n}=\cT_{n}\Big(\frac{(X_1+\cdots +X_n)}{-X_0} , \frac{(X_2+\cdots
    +X_n)}{-X_1},\ldots,\frac{X_n}{-X_{n-1}}\Big), \\
  w_n=\cT_{n}\Big(\frac{Z_{1}}{Y_{1}},\dots,\frac{Z_{n}}{Y_{n}}\Big) 
\end{gather*}
The form $r_n$ is, up to a normalization factor the form considered by
Goncharov in \cite{Go2}, the form $r_{n}'$ is a modification of $r_n$
that satisfies the additional property
\begin{equation}
  \label{eq:2}
  r_n'|_{H_{n}}=0.
\end{equation}
This is the main difference between $r_{n}$ and $r_{n}'$. The forms
$w_n$ where considered by Wang in \cite{Wa} and have been used to
construct the cubical version of Goncharov regulator. 
The forms $r_n$ and $r_n'$ are locally integrable forms on $\PP^{n}$
and $w_n$ is locally integrable in $(\PP^{1})^n$. We
denote the associated currents as $[r_n]$, $[r'_n]$ and $[w_n]$. 

We denote by $\overline \rho _{i}\colon \PP^{n-1}\to \PP^{n}
$, $n\ge 0$, $0\le i\le n$ the maps given by
\begin{displaymath}
  \overline \rho _{i}(X_{0}:\dots:X_{n-1})=
  (X_{0}:\dots:X_{i-1}:0:X_{i}:\dots,X_{n-1}).
\end{displaymath}
There is a commutative diagram
\begin{displaymath}
  \xymatrix{\Delta ^{n-1}\ar@{^{(}->}[r] 
    \ar[d]_{\rho  _{i}}&\PP^{n-1} \ar[d]_{\overline \rho  _{i}}\\
    \Delta ^{n}\ar@{^{(}->}[r] &\PP^{n}}
\end{displaymath}
where $\rho _{i}$ are the faces in the cosimplicial structure of
$\Delta $. Analogously we write $\overline \delta_{j}^{i}\colon
(\PP^{1})^{n-1}\to (\PP^{1})^{n} $, $n\ge 1$, $1\le i\le n$, $j=0,1$
the maps given by  
\begin{align*}
  \overline \delta^{i}_{0}&((Y_{1}:Z_{1}),\dots,(Y_{n-1}:Z_{n-1}))\\ &=
  ((Y_{1}:Z_{1}),\dots,(Y_{i-1}:Z_{i-1}),(1:0),(Y_{i}:Z_{i}),\dots,(Y_{n-1}:Z_{n-1})),\\
  \overline \delta^{i}_{1}&((Y_{1}:Z_{1}),\dots,(Y_{n-1}:Z_{n-1}))\\ &=
  ((Y_{1}:Z_{1}),\dots,(Y_{i-1}:Z_{i-1}),(0:1),(Y_{i}:Z_{i}),\dots,(Y_{n-1}:Z_{n-1})).
\end{align*}
Again, there is a commutative diagram 
\begin{displaymath}
  \xymatrix{\square ^{n-1}\ar@{^{(}->}[r] 
    \ar[d]_{\delta ^{i}_{j}}&(\PP^{1})^{n-1} \ar[d]_{\overline \delta ^{i}_{j}}\\
    \square ^{n}\ar@{^{(}->}[r] &(\PP^{1})^{n}}
\end{displaymath}
where $\delta ^{i}_{j}$ are the faces in the cocubical structure of
$\square$. 

Using Proposition \ref{prop:dif}, we can compute the
Deligne differential of the previous forms and currents.

\begin{prop}\label{prop:1} The following formulas hold.
  \begin{equation}
    \label{eq:3}
    d_{\mathscr{D}} r_n=d_{\mathscr{D}} r'_n=d_{\mathscr{D}} w_n=0
  \end{equation}
  \begin{align}
    \label{eq:4}
     d_{\mathscr{D}} [r_n]&=\sum_{i=0}^{n}(-1)^{i}(\overline \rho
                            _{i})_{\ast}[r_{m-1}],\\ 
     d_{\mathscr{D}} [r'_n]&=\sum_{i=0}^{n}(-1)^{i}(\overline \rho
                            _{i})_{\ast}[r'_{m-1}],\label{eq:5}\\
     d_{\mathscr{D}} [w_n]&=\sum_{i=0}^{n}(-1)^{i}((\overline \delta
                            ^{i}_{0})_{\ast}[w_{m-1}]-(\overline \delta
                            ^{i}_{1})_{\ast}[w_{m-1}]).\label{eq:7}  
  \end{align}
\end{prop}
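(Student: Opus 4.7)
The plan is to derive the proposition directly from Proposition~\ref{prop:dif} applied to each of the wedges of rational functions entering the definitions of $r_n$, $r_n'$, and $w_n$. The closedness assertions in \eqref{eq:3} are immediate from \eqref{eq:6}, so the real content will lie in computing, via \eqref{eq:8}, the residues $\Res_Z$ of each wedge along codimension-one subvarieties $Z$, and verifying that the sum assembles into the face structure of $\PP^\bullet$ (respectively $(\PP^1)^\bullet$) with the correct signs.

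For $r_n = \cT_n(X_1/X_0 \land \cdots \land X_n/X_0)$, only the coordinate hyperplanes $V(X_i)$, $0 \le i \le n$, can carry nontrivial residues, since each $X_k/X_0$ has divisor $(X_k)-(X_0)$. For $i \ge 1$, only $X_i/X_0$ has nonzero $\ord_{V(X_i)}$ (namely $+1$), so the residue is
\[
(-1)^{i-1}\bigl(\tfrac{X_1}{X_0} \land \cdots \widehat{\tfrac{X_i}{X_0}} \cdots \land \tfrac{X_n}{X_0}\bigr)\big|_{V(X_i)},
\]
and restriction along $\overline\rho_i\colon \PP^{n-1}\simeq V(X_i)\hookrightarrow\PP^n$ identifies the remaining wedge with the defining wedge of $r_{n-1}$. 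For $i=0$ all factors have pole order one, but multilinearity and antisymmetry in $\Lambda^n\CC(\PP^n)^\times$ give
\[
\bigwedge_{k=1}^n \tfrac{X_k}{X_0} \;=\; \tfrac{X_1}{X_0} \land \tfrac{X_2}{X_1} \land \cdots \land \tfrac{X_n}{X_1}
\]
(all other terms in the expansion repeat $X_1/X_0$), so again a single factor carries the order, and $\overline\rho_0$ restricts the wedge to the defining wedge of $r_{n-1}$. Tracking signs yields \eqref{eq:4}.

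For $r_n'$, writing $g_k := -(X_k+\cdots+X_n)/X_{k-1}$, the divisor of $g_k$ decomposes into a denominator part $-(X_{k-1})$ and a numerator part $(X_k+\cdots+X_n)$. The crucial step will be to show that residues along the numerator hyperplanes $V(X_k+\cdots+X_n)$ with $k \le n-1$ vanish: on such a hyperplane one has $X_{k+1}+\cdots+X_n = -X_k$, so
\[
g_{k+1}\big|_{V(X_k+\cdots+X_n)} \;=\; \tfrac{-(X_{k+1}+\cdots+X_n)}{X_k}\bigg|_{V(\cdots)} \;=\; 1,
\]
and the wedge $g_1 \land \cdots \widehat{g_k} \cdots \land g_n$ then contains the identity element of $\CC(V(\cdots))^\times$ as a factor, forcing it to vanish in $\Lambda^{n-1}$. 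This reduces the nontrivial residues to those along the coordinate hyperplanes $V(X_i)$ (with $V(X_n)$ entering as the numerator locus of $g_n$), and on each exactly one $g_k$ has nonzero order; the surviving wedge restricts via $\overline\rho_i$ to the defining wedge of $r_{n-1}'$ after the obvious reindexing. Assembling the signs yields \eqref{eq:5}.

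For $w_n$ the computation is similar but simpler: the divisor of $Z_k/Y_k$ is $(Z_k) - (Y_k)$, supported on the disjoint hyperplanes $V(Z_k)$ and $V(Y_k)$, which are the images of $\overline\delta_0^k$ and $\overline\delta_1^k$ respectively. Along each, only $Z_k/Y_k$ has nonzero order ($+1$ or $-1$), and the remaining wedge restricts to the defining wedge of $w_{n-1}$. Collecting the two sums with the appropriate signs gives \eqref{eq:7}. The only genuinely delicate step in the whole argument is the ``numerator vanishing'' for $r_n'$ above---precisely the mechanism that repairs the defect of $r_n$ discussed in Remark~\ref{rem Go} and upgrades $r_n'$ to a map of complexes compatible with the simplicial face structure.
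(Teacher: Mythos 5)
Your proof is correct and follows essentially the same route as the paper: apply Proposition~\ref{prop:dif} and compute residues along the relevant codimension-one loci, with the key observation being that the residue of $r_n'$ along $E_i = V(X_i+\cdots+X_n)$ vanishes because $(X_{i+1}+\cdots+X_n)/(-X_i)$ restricts to $1$ there. The paper only writes out \eqref{eq:5} explicitly, declaring the other cases analogous, so your treatment of \eqref{eq:4} (in particular the rewriting of the wedge to isolate a single pole along $V(X_0)$) and of \eqref{eq:7} just fills in those omitted details.
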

\begin{proof}
  We prove only equation \eqref{eq:5}. For $i=0,\dots,n$, let $D_{i}$,
  be the divisor of equation $X_{i}=0$ and for $i=1,\dots,n-1$, let
  $E_{i}$ be the divisor of equation $X_{i}+\dots+X_{n}=0$. For $i<n$,
  We have
  \begin{multline*}
    \Res_{D_{i}}\Big(\frac{(X_1+\cdots +X_n)}{-X_0}\land \ldots
    \land\frac{X_n}{-X_{n-1}}\Big)\\= (-1)^{i+1}
    \Big(\frac{(X_1+\cdots +X_n)}{-X_0}\land\dots\land
    \widehat{\frac{(X_{i+1}+\cdots
    +X_n)}{-X_i}}\land\ldots\land \frac{X_n}{-X_{n-1}}\Big)\Big|_{D_{i}},
  \end{multline*}
  where the symbol $\widehat {\phantom{m}} $ means that the term is
  omitted. For $i=n$,
  \begin{multline*}
    \Res_{D_{n}}\Big(\frac{(X_1+\cdots +X_n)}{-X_0}\land \ldots
    \land\frac{X_n}{-X_{n-1}}\Big)\\=(-1)^{n-1} 
    \Big(\frac{(X_1+\cdots +X_{n-1})}{-X_0}\land\dots\land
    \frac{X_{n-1}}{-X_{n-2}}\Big)\Big|_{D_{n}} 
  \end{multline*}
  Moreover, for $i=1,\dots,n-1$,
  \begin{multline*}
    \Res_{E_{i}}\Big(\frac{(X_1+\cdots +X_n)}{-X_0}\land \ldots
    \land\frac{X_n}{-X_{n-1}}\Big)\\= (-1)^{i}
    \Big(\frac{(X_1+\cdots +X_n)}{-X_0}\land\dots\land
    \widehat{\frac{(X_{i}+\cdots 
        +X_n)}{-X_{i-1}}}\land\ldots\land
    \frac{X_n}{-X_{n-1}}\Big)\Big|_{E_{i}}\\
    =0
   \end{multline*}
because the restriction of the function
$(X_{i+1}+\dots+X_{n})/(-X_{i})$ to $E_{i}$ is one. Note that this is
the key point that makes this form work.

Finally, for a divisor $Z\in (\PP^{n})^{(1)}$ different form the
previous ones, we have
\begin{displaymath}
  \Res_{Z}\Big(\frac{(X_1+\cdots +X_n)}{-X_0}\land \ldots
    \land\frac{X_n}{-X_{n-1}}\Big)=0.
\end{displaymath}
From these formulas and Proposition \ref{prop:dif} the equation
\eqref{eq:5} follows.
\end{proof}

Let $\x$ be a smooth projective variety over a field $k\subset
\CC$. We denote by $Z^{p}_{\Delta}(\x,n)_{0}$ and $Z^{p}_{\square}(\x,n)_{0}$
the normalized simplicial and cubical Bloch's higher Chow
complexes. In the simplicial case this is the group denoted
$N_{\Delta}^p(\x, n)$ before Proposition \ref{ML2} and in the cubical
case is defined for instance in \cite[Section 4.2]{BFT}.
 By \cite{Lv} 
we know that both complexes have the same homology groups: the higher
Chow groups $CH^{p}(\x,n)$. We recall
the argument in \cite{Lv} to prove that the simplicial  and cubical
versions of Bloch's higher Chow groups agree. 

Let $Z^{p}_{\square,\Delta}(\x,n,m)$ denote the group of
codimension $p$
cycles in $\x\times \square ^{n}\times \Delta ^{m}$ that meet properly
all the proper faces of $\x\times \square ^{n}\times \Delta
^{m}$. Then $Z^{p}_{\square,\Delta}(\x,\cdot,\cdot)$ is a
cocubical-cosimplicial abelian group. We
denote by $Z^{p}_{\square,\Delta}(\x,\ast,\ast)_{0}$ the associated
normalized double complex and by $Z^{p}_{\square,\Delta}(\x,\ast)_{0}$
the corresponding simple complex.
Then
\cite[Theorem 4.7]{Lv} states that both natural inclusions
\begin{displaymath}
  Z^{p}_{\Delta}(\x,\ast)_{0}\to
  Z^{p}_{\square,\Delta}(\x,\ast)_{0}\quad\text{and}\quad  
  Z^{p}_{\square}(\x,\ast)_{0}\to Z^{p}_{\square,\Delta}(\x,\ast)_{0}
\end{displaymath}
are quasi-isomorphisms. 

Since the family of currents $([r_m'])_{m\ge 0}$  is singular along the
hyperplanes $E_{i}$ introduced in the proof of Proposition
\ref{prop:1}, in order to use it to define a
regulator map, we
need to restrict the class of cycles we use. We denote by $L$ the
hyperplane arrangement
\begin{displaymath}
  L=E_{1}\cup\dots\cup E_{n-1}
\end{displaymath}
and by $Z^{p}_{\Delta ,L}(X,n)$ the group of
codimension $p$ cycles of $X\times \Delta ^{n}$ that intersect
properly all the finite intersections among the divisors $X\times D_{i}$,
$i=0,\dots,n$ and $X\times E_{j}$, $j=1,\dots,n-1$. These groups form a
simplicial complex and we write $Z^{p}_{\Delta
  ,L}(X,\ast)_0 $ for the normalized complex. It is a subcomplex of
$Z^{p}_{\Delta }(X,\ast)_{0}$. 

Using the same argument as in the proof of \cite[Lemma 8.14]{KL} (see
also the proof of \cite[Theorem 2.7]{Lv}) one can prove
\begin{lem}\label{lemm:1}
  The inclusion of complexes $Z^{p}_{\Delta
  ,L}(X,\ast)_{0}\hookrightarrow Z^{p}_{\Delta }(X,\ast)_{0}$ is a
quasi-isomorphism.
\end{lem}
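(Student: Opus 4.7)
The plan is to adapt the moving-lemma argument of \cite[Lemma 8.14]{KL} and \cite[Theorem 2.7]{Lv} — which was already invoked for Proposition \ref{ML2} to handle the weaker general-position condition ``$\RR$'' — to accommodate the stronger proper-intersection requirement against the hyperplane arrangement $L=E_{1}\cup\cdots\cup E_{n-1}$. Let $T=(\mathbb{G}_{m})^{n+1}/\mathbb{G}_{m}$ act on $\PP^{n}$ by coordinatewise scaling, and hence on $\x\times\Delta^{n}$ through the second factor. This action preserves each coordinate hyperplane $D_{i}=\{X_{i}=0\}$, so it preserves the simplicial face structure and descends to a chain-level action on $Z^{p}_{\Delta}(\x,n)_{0}$ that commutes with $\partial_{\mathcal{B}}$. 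Crucially, $T$ does \emph{not} preserve the hyperplanes $E_{j}$: a generic $t\in T(L)$ (over a finite extension $L\supset k$) takes $E_{j}$ to a hyperplane with generic coefficients.

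Given a class $\xi\in H_{n}(Z^{p}_{\Delta}(\x,\ast)_{0})$ with representative $\mathfrak{Z}\in N^{p}_{\Delta}(\x,n)\cap\ker\partial_{\mathcal{B}}$, I would first invoke the Kleiman-type transversality of \cite{Lv} to choose a generic $t\in T(L)$ so that $t\cdot\mathfrak{Z}$ meets every finite intersection of faces $D_{i}$ and hyperplanes $E_{j}$ in the expected codimension; equivalently, $t\cdot\mathfrak{Z}\in Z^{p}_{\Delta,L}(\x_{L},n)_{0}$. A one-parameter curve $\gamma\colon\mathbb{A}^{1}\to T$ with $\gamma(0)=e$, $\gamma(1)=t$ yields a family $\widetilde{\mathfrak{Z}}\subset\x_{L}\times\Delta^{n}\times\mathbb{A}^{1}$ interpolating between $\mathfrak{Z}_{L}$ and $t\cdot\mathfrak{Z}$; by the standard translation of $\mathbb{A}^{1}$-homotopies into normalized simplicial boundaries (cf.\ the summary in Appendix I to \S3), this exhibits $\mathfrak{Z}_{L}-t\cdot\mathfrak{Z}=\partial_{\mathcal{B}}\mathcal{W}$ for some $\mathcal{W}\in N^{p}_{\Delta}(\x_{L},n+1)$, and a further generic choice in the construction of $\mathcal{W}$ can be arranged so that $\mathcal{W}\in Z^{p}_{\Delta,L}(\x_{L},n+1)_{0}$. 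A norm descent from $L$ to $k$, valid thanks to $\QQ$-coefficients, then produces a representative of $\xi$ in $Z^{p}_{\Delta,L}(\x,n)_{0}$, proving surjectivity on homology. Injectivity is handled by the symmetric argument: apply the same moving procedure to any $(n+1)$-chain in $Z^{p}_{\Delta}(\x,\ast)_{0}$ witnessing triviality of a class in $Z^{p}_{\Delta,L}(\x,\ast)_{0}$.

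The principal technical obstacle — and the reason the cited proofs are nontrivial — is that the $E_{j}$ are not preserved by the group acting on $\Delta^{n}$, so the Kleiman-transversality input must be applied against a collection of subvarieties that itself moves with the group element. One must ensure simultaneous proper intersection for all iterated combinations of the $D_{i}$ and $E_{j}$ \emph{along the entire interpolating family}, not only at the endpoints, so that the homotopy $\mathcal{W}$ genuinely lies in the subcomplex $Z^{p}_{\Delta,L}(\x_{L},\ast)_{0}$. This is the step where the argument of \cite[Lemma 8.14]{KL} requires the most care, and its adaptation — replacing the cubical double $\mathcal{T}^{n}$ by the iterated simplicial double $\mathcal{T}^{n}_{\Delta}$ and enlarging the collection of fixed subvarieties from those witnessing ``$\RR$'' to those witnessing ``$L$'' — is the technical heart of the present lemma.
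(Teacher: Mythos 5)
The key step in your argument — that the torus $T=(\mathbb{G}_m)^{n+1}/\mathbb{G}_m$ ``descends to a chain-level action on $Z^{p}_{\Delta}(\x,n)_{0}$ that commutes with $\partial_{\mathcal{B}}$'' — does not hold. The coordinate scaling preserves the hyperplanes $D_i=\{X_i=0\}$ but does \emph{not} preserve $H_n=\{X_0+\cdots+X_n=0\}$: under $t=(t_0:\cdots:t_n)$ one has $t\cdot H_n=\{t_0^{-1}X_0+\cdots+t_n^{-1}X_n=0\}$, which differs from $H_n$ for any nontrivial $t$. Consequently $T$ does not act on $\Delta^n=\PP^n\setminus H_n$, does not preserve the face subschemes $\rho_j(\Delta^{n-1})=\{X_j=0\}\cap\Delta^n$ (only the ambient hyperplanes $\{X_j=0\}$), does not commute with the degeneracies $\sigma_i$, and does not preserve the subgroup $Z^p(\x\times H_n)$ by which $Z^p(\Delta_\x^n)$ is a quotient. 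There is in fact no positive-dimensional subgroup of $\mathrm{PGL}_{n+1}$ fixing all the $D_i$ and $H_n$ simultaneously — forcing the linear map to be diagonal and then fixing $H_n$ forces all the diagonal entries to be equal — which is exactly why the simplicial moving lemma cannot be run by directly translating inside $\Delta^n$.

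The argument the paper has in mind (stated as a pointer to \cite[Lemma 8.14]{KL} and \cite{Lv}, and sketched earlier in the proof of Proposition~\ref{ML2}) takes a different route. Rather than acting by a group on $\PP^n$, one expresses a normalized cycle $\mathfrak{Z}\in\ker\partial_{\mathcal{B}}$, up to a boundary, as the alternating pullback of a cycle on the iterated double $\mathcal{T}^n_\Delta:=D\left(\Delta_\x^n;\rho_0(\Delta_\x^{n-1}),\ldots,\rho_n(\Delta_\x^{n-1})\right)$, which in turn arises by intersecting with a cycle $\mathcal{W}$ on a homogeneous space for $GL_n(K)$. The Kleiman-type generic transversality of \cite{Lv} is then applied to translates $g^*\mathcal{W}$, $g\in GL_n(L)$, and here one augments the list of subvarieties against which transversality is required by the finite intersections of the $D_i$ and $E_j$. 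The doubling device is precisely what compensates for the absence of a group acting on $\Delta^n$ itself: the homogeneous-space parameter $g$ supplies the free motion that your torus cannot, and the norm descent over $L\supset K$ (valid with $\QQ$-coefficients) brings the moved representative back to $\x$. Your last paragraph correctly identifies that the $E_j$ ``move with the group element,'' but the correct group is not acting on $\PP^n$, and the resulting interpolating family lives over the homogeneous space, not over an $\mathbb{A}^1$ inside a torus in $\PP^n$.
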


We now denote by $Z^{p}_{\square,\Delta,L}(\x,n,m)$ the group of
codimension $p$
cycles in $\x\times \square ^{n}\times \Delta ^{m}$ that meet properly
all the finite intersections among the faces and the divisors $X\times
\square^{n}\times E_{j}$. Again it is a cocubical-cosimplicial abelian
group and we
denote by $Z^{p}_{\square,\Delta,L}(\x,\ast,\ast)_{0}$ the associated
normalized double complex and by $Z^{p}_{\square,\Delta,L}(\x,\ast)_{0}$
the corresponding simple complex. Combining the arguments of
\cite[Theorem 4.7]{Lv} and of with \cite[Lemma 8.14]{KL} one can prove
that the natural inclusions
\begin{displaymath}
  i_{\Delta }\colon Z^{p}_{\Delta,L}(\x,\ast)_{0}\to
  Z^{p}_{\square,\Delta,L}(\x,\ast)_{0}\quad\text{and}\quad i_{\square}\colon
  Z^{p}_{\square}(\x,\ast)_{0}\to Z^{p}_{\square,\Delta,L}(\x,\ast)_{0}
\end{displaymath}
are quasi-isomorphisms. 

As in \cite{Go2}, the forms $r_n$, $w_n$ and $r'_n$ determine, for
each $p,n\ge 0$, maps 
\begin{align*}
  &\cP_{\Delta }\colon Z^{p}_{\Delta }(X,n)_{0}\to
  \mathscr{D}_{D}^{2p-n}(\x,p),\\ 
  &\cP_{\square }\colon Z^{p}_{\square }(X,n)_{0}\to
  \mathscr{D}_{D}^{2p-n}(\x,p),\\ 
  &\cP'_{\Delta}\colon Z^{p}_{\Delta,L }(X,n)_{0}\to
  \mathscr{D}_{D}^{2p-n}(\x,p).
\end{align*}
We just give the definition of $\cP'_{\Delta}$ the others already having been
defined in the literature. For instance $\cP_{\Delta }$ is, up
to a normalization factor, the map denoted $\cP^{\bullet}(n)$ in
\cite{Go2}.  Let $Y$ be an irreducible subvariety 
of $\x\times \PP ^{n}$ 
that is 
not contained in any of the divisors  $\x\times D_{i}$, $i=0,\dots,n,$
or $\x\times E_{j}$, $j=1,\dots,n-1$, and
$\widetilde Y$ a resolution of singularities of $Y$. Put 
$\iota\colon \widetilde Y\to X$ and $q\colon \widetilde Y\to \PP^{n}$ for
the maps induced by the two projections. Then $q^{\ast}r_{n}'$ is a
differential form, on an open subset of $\widetilde Y$, that is locally
integrable on $\widetilde Y$. Since the map $\iota$ is proper, we obtain a
well defined current
\begin{displaymath}
  \cP'_{\Delta }(Y)=\iota_{\ast}[q^{\ast}r_{n}']\in \mathscr{D}_{D}^{2p-n}(\x,p).
\end{displaymath}

If $Y$ is an irreducible subvariety of $\x\times \Delta
  ^{n}$  of codimension $p$,
  such that the cycle $[Y]$ belons to  $Z^{p}_{\Delta,L }(X,n)$, we
  write $\overline Y$ for the closure of $Y$ in $\x\times \PP ^{n}$
  and define
  \begin{displaymath}
    \cP'_{\Delta }([Y])=\cP'_{\Delta }(\overline Y).
  \end{displaymath}
The map $\cP'_{\Delta }$ is extended  by linearity to the whole 
$Z^{p}_{\Delta,L}(X,n)$ and then restricted to
$Z^{p}_{\Delta,L}(X,n)_{0}$ to give the desired map. 

We have seen in Remark \ref{rem Go} that, for a fixed $p$, the maps
$\cP_{\Delta }$ do not form a morphism of complexes. Nevertheless we have
\begin{thm}\label{thm:1}
  For every integer $p\ge 0$, the maps
  \begin{displaymath}
    \cP_{\square }\colon Z^{p}_{\square }(X,\ast)_{0}\to
  \mathscr{D}_{D}^{2p-\ast}(\x,p),\ \text{and}\  
  \cP'_{\Delta}\colon Z^{p}_{\Delta,L }(X,\ast)_{0}\to
  \mathscr{D}_{D}^{2p-\ast}(\x,p)
  \end{displaymath}
are morphisms of complexes.
\end{thm}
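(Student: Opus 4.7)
The plan is to reduce both assertions to pullback versions of the residue formulas \eqref{eq:5} and \eqref{eq:7}. Fix an irreducible precycle $Y\in Z^p_{\Delta,L}(\x,n)_0$ with desingularized closure $\iota\colon\widetilde Y\to\x$ and auxiliary projection $q\colon\widetilde Y\to\PP^n$, and set
\[
\omega = \tfrac{X_1+\cdots+X_n}{-X_0}\wedge\cdots\wedge\tfrac{X_n}{-X_{n-1}}\in\Lambda^n\CC(\PP^n)^{\times}.
\]
Properness of $\iota$ commutes $d_{\mathscr{D}}$ with $\iota_{\ast}$, so computing $d_{\mathscr{D}}\cP'_{\Delta}(Y)$ reduces to computing $d_{\mathscr{D}}[\cT_n(q^{\ast}\omega)]$ as a current on $\widetilde Y$ and pushing forward.

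First I would verify that Proposition \ref{prop:dif} applies to $q^{\ast}\omega$ on $\widetilde Y$. The membership condition $Y\in Z^p_{\Delta,L}$ says that $Y$ meets every finite intersection of the divisors $\{D_i\}_{i=0}^{n}$ and $\{E_j\}_{j=1}^{n-1}$ in the expected codimension; hence $q^{\ast}D_i$ and $q^{\ast}E_j$ are honest Weil divisors on $\widetilde Y$, which gives well-defined residue maps $\Res_Z\colon\Lambda^n\CC(\widetilde Y)^{\times}\to\Lambda^{n-1}\CC(Z)^{\times}$ for every codimension-one $Z$ of $\widetilde Y$. Multilinearity and antisymmetry still allow one to rewrite $\cT_n(q^{\ast}\omega)$ so that each term is locally integrable, exactly as in the discussion preceding Proposition \ref{prop:dif}.

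Next I would recompute the residues of $q^{\ast}\omega$ divisor by divisor on $\widetilde Y$. Components not lying over $\bigcup_i D_i\cup\bigcup_j E_j$ contribute zero. Components lying over $D_i$ produce residues that assemble (after pushforward by $\iota$) into the current $\cP'_{\Delta}(\rho_i^{\ast}Y)$, by functoriality of the residue along intersections in the expected codimension. The crucial point is components $Z$ lying over some $E_j$: here the identical cancellation from the proof of \eqref{eq:5} applies, because $-(X_{j+1}+\cdots+X_n)/X_j$ restricts to the constant function $1$ on $E_j$ and therefore on $q(Z)\subset E_j$, so its class in $\CC(Z)^{\times}$ is trivial and $\Res_Z(q^{\ast}\omega)=0$. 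This is the sole role of passing from $r_n$ to $r'_n$: without it, the residues along the $E_j$'s would appear in $d_{\mathscr{D}}\cP'_{\Delta}(Y)$ but have no matching preimage in $\partial Y$, which is precisely the breakdown diagnosed in Remark \ref{rem Go}.

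Combining these steps with \eqref{eq:8} yields $d_{\mathscr{D}}\cP'_{\Delta}(Y) = \pm\,\cP'_{\Delta}(\partial Y)$, with the sign fixed by the conventions of $\mathscr{D}_D^{\ast}$; extending $\QQ$-linearly and restricting to $Z^p_{\Delta,L}(\x,\ast)_0$ finishes the simplicial case. The cubical statement for $\cP_{\square}$ is formally parallel, using \eqref{eq:7} and the standard proper intersection conditions of the cubical Bloch complex; no $L$-type enlargement is needed since $w_n$ has no analogue of the $E_j$-loci, and this case is essentially the computation of \cite{BFT}. The main obstacle is the first step: one must justify that the current-level residue identity \eqref{eq:8} for $\cT_n(\omega)$ pulls back to the desingularization $\widetilde Y$, which is exactly why the subcomplex $Z^p_{\Delta,L}$ (with proper intersections against every finite intersection of $D_i$'s and $E_j$'s) is forced upon us, and why Lemma \ref{lemm:1} is needed to know that this subcomplex still computes $CH^p(\x,n)$.
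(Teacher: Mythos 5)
Your argument misidentifies the mechanism that makes $\cP'_{\Delta}$ a morphism of complexes, and as a result leaves a genuine gap. The $E_j$-cancellation you invoke (that $(X_{j+1}+\cdots+X_n)/(-X_j)$ restricts to $1$ on $E_j$) is indeed used, but it belongs to the proof of Proposition \ref{prop:1}: it shows that $d_{\mathscr{D}}[r'_n]$ has no residues along the $E_j$'s, which is what yields \eqref{eq:5}. Your claim that this is ``the sole role of passing from $r_n$ to $r'_n$'' is not correct: $r_n=\cT_n(X_1/X_0,\dots,X_n/X_0)$ has no singularities along the $E_j$'s at all, so there are no $E_j$-residues for $r_n$ to cancel, and yet $\cP_{\Delta}$ still fails to be a morphism of complexes (Remark \ref{rem Go}).

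The actual obstruction is elsewhere. Applying Proposition \ref{prop:1} and pushing forward along $\iota$ gives $d_{\mathscr{D}}\cP'_{\Delta}([Y])=\cP'_{\Delta}(\overline{\partial}[\overline{Y}])$, where $\overline{\partial}[\overline{Y}]=\sum_i(-1)^i\overline{\rho}_i^{\ast}[\overline{Y}]$ is the boundary of the \emph{closure} in $\x\times\PP^{n-1}$. This differs from $\partial[Y]$ by the sum $B$ of components lying in $\x\times H_{n-1}$, which have no counterpart in the simplicial Bloch complex and must be shown to contribute zero. The key fact is $\cP'_{\Delta}(B)=0$, which follows from the vanishing $r'_{n-1}|_{H_{n-1}}=0$ of \eqref{eq:2} (on $H_{n-1}$ the first entry $(X_1+\cdots+X_{n-1})/(-X_0)$ is identically $1$). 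Note that for $i\ge 1$ one has $D_i\cap H_n\ne D_i\cap E_j$ for any $j$, so the $E_j$-argument does not reach these components. Your ``assembly'' step --- that residues over $D_i$ give $\cP'_{\Delta}(\rho_i^{\ast}Y)$ rather than $\cP'_{\Delta}(\overline{\rho}_i^{\ast}[\overline{Y}])$ --- tacitly uses this vanishing without stating it, and your appeal to ``functoriality of the residue along intersections in the expected codimension'' does not supply it, since $Z^p_{\Delta,L}$ imposes no proper-intersection hypothesis at $H_n$. The decomposition $\overline{\partial}[\overline{Y}]=A+B$ together with $\cP'_{\Delta}(B)=0$ via \eqref{eq:2} is the entire content of the paper's proof, and it is exactly the step your proposal omits.
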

\begin{proof}
  We give the proof for $\cP'_{\Delta}$ being the other one
  analogous. Let $Y$ be an irreducible subvariety of $\x\times \Delta
  ^{n}$  of codimension $p$,
  such that the cycle $[Y]$ belongs to  $Z^{p}_{\Delta,L }(X,n)$. In
  particular $Y$ is not contained in any of the divisors $\x\times D_{i}$
  or $\x\times E_{j}$. Let $\partial$ denote the differential in the complex  
  $Z^{p}_{\Delta,L }(X,\ast)$ and write
  \begin{displaymath}
    \overline \partial  [\overline Y]= \sum
    _{i=0}^{n}(-1)^{i}\overline{\rho} _{i}^{\ast}[\overline Y].
  \end{displaymath}
  This is a cycle on $\x\times \PP^{n-1}$. Note that $\partial [Y]$ is
  the restriction of $\overline \partial  [\overline Y]$ to $\x\times
      \Delta ^{n-1}$, that is,
   \begin{displaymath}
    \partial [Y] =\overline \partial  [\overline Y]\big|_{\x\times
      \Delta ^{n-1}}. 
  \end{displaymath}
  We decompose $ \overline \partial  [\overline Y]=A+B$, where the
  cycle $A$ gathers all the components of $ \overline \partial
  [\overline Y]$ not contained in $H_{n-1}$ and $B$ gathers the
  components contained in $H_{n-1}$. Thus
  \begin{displaymath}
    A=\overline {(\partial [Y])}
  \end{displaymath}
  is the closure of $\partial [Y]$ in $\x\times \PP^{n-1}$. 
  Proposition \ref{prop:1} implies readily that
  \begin{displaymath}
    d_{\mathscr{D}}\cP'_{\Delta }([Y])=\cP'_{\Delta }(\overline \partial
    [\overline Y]) = \cP'_{\Delta }(A) + \cP'_{\Delta }(B).
  \end{displaymath}
  The key point, that fails for the family of currents $(r_{n})_{n\ge 0}$ is that, since
  $r'_{n-1}$ vanishes on $H_{n-1}$, then $\cP'_{\Delta }(B)=0$. Thus
  \begin{displaymath}
    d_{\mathscr{D}}\cP'_{\Delta }([Y])=\cP'_{\Delta }(A)= \cP'_{\Delta }(\partial
    [Y])
  \end{displaymath}
  proving the result for $\cP_{\Delta }'$. The proof for
  $\cP_{\square}$ is similar.
\end{proof}

Thanks to Theorem \ref{thm:1} and Proposition \ref{prop:1}, we obtain
morphisms
\begin{displaymath}
  \cP_{\square},\cP'_{\Delta }\colon CH^{p}(X,n)\longrightarrow
 H_{\cD}^{2p-n}(\x,\RR(p)).
\end{displaymath}

By \cite[Theorem 7.8]{BFT} we know that the map $\cP_{\square}$ is
compatible with Beilinson regulator. To prove that $\cP'_{\Delta }$
is also compatible with Beilinson regulator we compare $\cP_{\square}$
and $\cP'_{\Delta }$. To this end we introduce the locally integrable
forms
\begin{displaymath}
  M_{n,m}=\cT_{n+m}\Big(\frac{Z_{1}}{Y_{1}},\dots,\frac{Z_{n}}{Y_{n}},
  \frac{(X_1+\cdots 
    +X_n)}{-X_0} ,\ldots,\frac{X_n}{-X_{n-1}}\Big) 
\end{displaymath}
on $(\PP^{1})^n\times \PP^{n}$, that are a hybrid between the forms
$w_n$ and $r'_m$. Following the same steps used to define $\cP'_{\Delta }$
and to prove Theorem \ref{thm:1}, we obtain
\begin{prop}
  The family of forms $(M_{n,m})_{n,m}$ induce a morphism of complexes
  \begin{displaymath}
    \cP'_{\square,\Delta}\colon Z^{p}_{\square,\Delta,L}(\x,\ast)_{0} \to
  \mathscr{D}_{D}^{2p-\ast}(\x,p).
  \end{displaymath}
\end{prop}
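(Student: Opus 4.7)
The plan is to mirror the proof of Theorem \ref{thm:1} almost verbatim, replacing the forms $w_n$ and $r'_m$ by their hybrid $M_{n,m}$ and treating the cubical and simplicial boundary directions in parallel. First I would compute $d_{\mathscr{D}}[M_{n,m}]$ on $(\PP^1)^n\times\PP^m$ using Proposition \ref{prop:dif}, analysing all codimension-one divisors contributing to the residue. From the $w_n$-type factors, for each $i=1,\ldots,n$, the divisors $\{Y_i=0\}$ and $\{Z_i=0\}$ contribute $\pm(\overline{\delta}^i_1)_*[M_{n-1,m}]$ and $\pm(\overline{\delta}^i_0)_*[M_{n-1,m}]$ (with signs dictated by the antisymmetry of $\cT_{n+m}$). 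From the $r'_m$-type factors, for each $i=0,\ldots,m$, the divisor $\{X_i=0\}$ contributes $\pm(\overline{\rho}_i)_*[M_{n,m-1}]$; crucially, the divisors $E_j=\{X_j+\cdots+X_m=0\}$ for $j=1,\ldots,m-1$ contribute \emph{zero} by exactly the cancellation used in the proof of Proposition \ref{prop:1}, since $-(X_{j+1}+\cdots+X_m)/X_j|_{E_j}\equiv 1$ kills the Milnor symbol.

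The second, and most essential, point is the analogue of \eqref{eq:2}, namely $M_{n,m}|_{(\PP^1)^n\times H_m}=0$. This is proved exactly as for $r'_m$: for any irreducible subvariety $\mathfrak{W}\subset(\PP^1)^n\times H_m$, if $k$ is the smallest index with $X_k|_{\mathfrak{W}}\not\equiv 0$, then $X_k+\cdots+X_m\equiv 0$ on $\mathfrak{W}$ forces $-(X_{k+1}+\cdots+X_m)/X_k|_{\mathfrak{W}}\equiv 1$, so the corresponding $\log\log$ slot of $\cT_{n+m}$ vanishes identically along $\mathfrak{W}$. This is precisely the ingredient that fails for the analogous simplicial hybrid built from $r_m$ rather than $r'_m$.

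With these two ingredients, the conclusion proceeds exactly as in the proof of Theorem \ref{thm:1}. For an irreducible $Y\subset\x\times\square^n\times\Delta^m$ of codimension $p$ representing a class in $Z^p_{\square,\Delta,L}(\x,n,m)$, let $\overline{Y}$ denote its closure in $\x\times(\PP^1)^n\times\PP^m$, desingularize via $\iota\colon\widetilde{Y}\to\x$ with second projection $q\colon\widetilde{Y}\to(\PP^1)^n\times\PP^m$, and push the differential formula for $[q^*M_{n,m}]$ forward by $\iota_*$. In the cubical direction this reproduces $\partial_\square[Y]$. In the simplicial direction one decomposes the naive alternating sum of faces as $A+B$, with $B$ collecting the components contained in $H_m$; the vanishing of $M_{n,m}$ on $(\PP^1)^n\times H_m$ gives $\cP'_{\square,\Delta}(B)=0$, and $A$ is the closure of $\partial_\Delta[Y]$. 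Combining both boundaries with the sign convention for the simple complex associated to the double complex, and noting that the image of any degeneracy is annihilated by the antisymmetry of $\cT_{n+m}$ so that passing to $Z^p_{\square,\Delta,L}(\x,\ast)_0$ is automatic, one obtains $d_{\mathscr{D}}\cP'_{\square,\Delta}([Y])=\cP'_{\square,\Delta}(\partial[Y])$.

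The only delicate technical step — and the one I would check most carefully — is the well-definedness of $\cP'_{\square,\Delta}(Y):=(\pi_{\x})_*\iota_*[q^*M_{n,m}]$ as a current on $\x$: this is where the hypothesis of proper intersection with all the $E_j$'s (and with their finite intersections with faces), built into the definition of $Z^p_{\square,\Delta,L}$, is essential, since otherwise $q^*M_{n,m}$ may fail to be locally integrable on $\widetilde{Y}$. Everything else reduces to the sign bookkeeping of the proof of Theorem \ref{thm:1} and the cube-simplex translation in \cite[Theorem 4.7]{Lv}.
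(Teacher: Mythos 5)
Your proof is correct and follows exactly the route the paper intends: the paper gives no explicit argument but states that the proposition is obtained "following the same steps used to define $\cP'_{\Delta}$ and to prove Theorem \ref{thm:1}," which is precisely what you have done, with the two key ingredients — the hybrid residue formula with vanishing along the $E_j$, and the vanishing $M_{n,m}|_{(\PP^1)^n\times H_m}=0$ — correctly identified and justified.
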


Now we can state and prove the main result of this appendix.

\begin{thm}\label{thm:2} The maps
  \begin{displaymath}
    \cP_{\square},\cP'_{\Delta }\colon CH^{p}(X,n)\longrightarrow
    H_{\cD}^{2p-n}(\x,\RR(p))    
  \end{displaymath}
  agree. In consequence, the map $\cP'_{\Delta }$ is compatible with
  Beilinson regulator.
\end{thm}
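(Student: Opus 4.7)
The plan is to factor both $\cP_{\square}$ and $\cP'_{\Delta}$ through the hybrid morphism $\cP'_{\square,\Delta}$ on the common complex $Z^{p}_{\square,\Delta,L}(\x,\ast)_{0}$ --- this is precisely what that hybrid was set up to do. The first step is to observe, by direct inspection of the definition of $M_{n,m}$, the two extremal identities
\[
M_{n,0} = w_{n}, \qquad M_{0,m} = r'_{m}.
\]
From these it follows tautologically that $\cP'_{\square,\Delta} \circ i_{\square} = \cP_{\square}$ and $\cP'_{\square,\Delta} \circ i_{\Delta} = \cP'_{\Delta}$ already at the level of normalized precycles, so the triangle with these two inclusions along the sides and the hybrid map down the middle commutes in the category of complexes of abelian groups.

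The second step is to pass to homology using the quasi-isomorphisms $i_{\square}$ and $i_{\Delta}$ recorded in the discussion immediately preceding the theorem (obtained by combining the argument of \cite[Thm.~4.7]{Lv} with that of \cite[Lem.~8.14]{KL}), together with Lemma \ref{lemm:1}, which identifies the homology of $Z^{p}_{\Delta,L}(\x,\ast)_{0}$ with $CH^{p}(\x,n)$. This identifies both $\cP_{\square}$ and $\cP'_{\Delta}$, as maps $CH^{p}(\x,n) \to H^{2p-n}_{\cD}(\x,\RR(p))$, with the common map induced by $\cP'_{\square,\Delta}$, which gives the first assertion. The second then follows immediately from \cite[Thm.~7.8]{BFT}, which asserts that $\cP_{\square}$ realizes Beilinson's regulator.

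I do not anticipate any serious obstacle. The only non-formal input is the fact that $\cP'_{\square,\Delta}$ is a morphism of complexes, which was already handled by the same reasoning as in Theorem \ref{thm:1}: it hinges on the vanishing of $r'_{m}$, and hence of $M_{n,m}$ along the simplicial face $H_{m}$, which is exactly the improvement over Goncharov's original $r_{m}$ whose absence was the motivating defect of this appendix. Once this hybrid morphism is in hand, the rest is a formal diagram chase through three quasi-isomorphisms.
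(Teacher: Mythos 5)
Your argument reproduces the paper's proof: both factor $\cP_{\square}$ and $\cP'_{\Delta}$ through the hybrid morphism $\cP'_{\square,\Delta}$ on $Z^{p}_{\square,\Delta,L}(\x,\ast)_{0}$, check commutativity of the triangle (your observation that $M_{n,0}=w_n$ and $M_{0,m}=r'_m$ is exactly the ``direct computation'' the paper leaves to the reader), and then invoke that $i_{\square}$ and $i_{\Delta}$ are quasi-isomorphisms together with \cite[Thm.~7.8]{BFT}. Correct, and essentially identical to the paper's argument.
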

\begin{proof}
  A direct computation shows that the diagram
  \begin{displaymath}
    \xymatrix{ Z^{p}_{\square,\Delta,L}(\x,\ast)_{0} 
      \ar[rd]^{\cP'_{\square,\Delta}}&
      Z^{p}_{\square}(\x,\ast)_{0} \ar[d]^{\cP_{\square}}
      \ar[l]_{i_{\square}}\\  
      Z^{p}_{\Delta,L}(\x,\ast)_{0} \ar[r]_{\cP'_{\Delta}}
      \ar[u]^{i_{\Delta }}&  
      \mathscr{D}_{D}^{2p-\ast}(\x,p)
    }
  \end{displaymath}
  is commutative. Then the theorem follows from the fact that
  $i_{\square}$ and $i_{\Delta }$ are quasi-isomorphisms.
\end{proof}

{\begin{rem} 
We remark that the simplicial regulator $\cP'_{\Delta}$
just described also agrees with the composite
\begin{equation}\label{JJJ}
CH^p(\x,n)_{\QQ} \overset{AJ^{p,n}_{\Delta,X}}{\longrightarrow} H^{2p-n}_{\mathcal{H}}\left( \x^{an}_{\CC},\QQ(p)\right) \overset{\pi_{\mathbb{R}}}{\longrightarrow}
H^{2p-n}_{\mathcal{H}}\left( \x^{an}_{\CC},\RR(p)\right),
\end{equation}
where (as $\x$ is smooth projective)
\[
H^{2p-n}_{\mathcal{H}}\left( \x^{an}_{\CC},\RR(p)\right) \simeq \frac{H^{2p-n}\left( \x^{an}_{\CC},\CC\right)}
{F^pH^{2p-n}\left( \x^{an}_{\CC},\CC\right) + H^{2p-n}\left( \x^{an}_{\CC},\RR(p)\right)}
\]
\[
\xrightarrow{\pi_{p-1 \ \simeq}}
\frac{H^{2p-n}\left( \x^{an}_{\CC},\RR(p-1)\right)}{\pi_{p-1}
F^pH^{2p-n}\left( \x^{an}_{\CC},\CC\right)},
\]
with $\pi_{p-1}$ induced by the canonical projection
\[
 \CC = \RR(p)\oplus \RR(p-1) \to \RR(p-1).
 \]
Indeed, any class in $CH^p(X,n)$ has a representative $\mathfrak{Z}\in\ker(\partial)\subset Z^p_{\Delta}(X,n)_0\cap Z^p_{\Delta,\mathbb{R}}(X,n)$, and we have $\cP_{\square}(\mathfrak{Z}^{\square}) = \cP'_{\Delta}(\mathfrak{Z})$ (by Theorem \ref{thm:2}) and $AJ^{p,n}_X(\mathfrak{Z}^{\square}) = AJ_{\Delta,X}^{p,n}(\mathfrak{Z})$ (by the proof of Theorem \ref{AJ Thm}).  So the assertion reduces to the coincidence of $\pi_{\mathbb{R}}\circ AJ^{p,n}_X$ wih the cubical Goncharov regulator $\cP_{\square}$, which was verified in $\S$3.1.1 of \cite{Ke1}.
\end{rem}}

\curraddr{\noun{${}$}\\
\noun{Department of Mathematics, Campus Box 1146}\\
\noun{Washington University in St. Louis}\\
\noun{St. Louis, MO} \noun{63130, USA}}

\email{\emph{${}$}\\
\emph{e-mail}: matkerr@math.wustl.edu}

\curraddr{${}$\\
\noun{Department of Mathematical and Statistical Sciences}\\
\noun{University of Alberta} \\
\noun{Edmonton, Alberta}  \noun{T6G 2G1, CANADA}}

\email{${}$\\
\emph{e-mail}: lewisjd@math.ualberta.ca}

\curraddr{\noun{${}$}\\
\noun{Department of Mathematics}\\
\noun{Harvard University}\\
\noun{Cambridge, MA}  \noun{02138, USA}}

\email{\emph{${}$}\\
\emph{e-mail}: patricklopatto@gmail.com}

\curraddr{\noun{${}$}\\
\noun{Instituto de Ciencias Matematicas (CSIC-UAM-UCM-UCM3)}\\
\noun{Calle Nicol\'as Ca\-bre\-ra~15, Campus UAB, Cantoblanco,}\\
\noun{28049 Madrid 
SPAIN}

\email{${}$\\
\emph{e-mail}: burgos@icmat.es}

\begin{thebibliography}{BKV}
\bibitem[BKV]{BKV} S. Bloch, M. Kerr and P. Vanhove, \emph{A Feynman
integral via higher normal functions}, preprint, 2014, arXiv:1406.2664.

\bibitem[BFT]{BFT} J.~I. Burgos Gil, E. Feliu, Y. Takeda,
     \emph{On {G}oncharov's regulator and higher arithmetic {C}how
              groups}, Int. Math. Res. Not. IMRN, 2011 (2011), 40-73.

\bibitem[Bu]{Bu} J.~I. Burgos Gil. \emph{Arithmetic Chow rings and
    Deligne-Beilinson cohomology}, J. Alg. Geom. 6 (1997) 335-377.

\bibitem[BKK]{BKK} J.~I. Burgos Gil, J. Kramer, and
  U. K\"uhn. \emph{Cohomological arithmetic Chow rings},
  J. Inst. Math.  Jussieu 6(1) (2007), 1-172.
 
\bibitem[dJ]{dJ} R. de Jeu, \emph{A remark on the rank conjecture},
$K$-theory 25 (2002), 215-231.

\bibitem[DK]{DK} C. Doran and M. Kerr, \emph{Algebraic $K$-theory
of toric hypersurfaces}, CNTP 5 (2011), no. 2, 397-600.

\bibitem[Go1]{Go1} A. Goncharov, \emph{Chow polylogarithms and regulators},
Math. Res. Lett. 2 (1995), 95-112.

\bibitem[Go2]{Go2} ---------, \emph{Polylogarithms, regulators, and
Arakelov motivic complexes}, Journal of the AMS 18 (2004), 1-60.

\bibitem[Go3]{Go3} ---------, \emph{Geometry of configurations, polylogarithms,
and motivic cohomology}, Adv. in Math. 114 (1995), 197-318.

\bibitem[Go4]{Go4} ---------, \emph{Polylogarithms and motivic Galois
groups}, in Symp. in Pure Math., vol. 55, part 2, 1994, 43-97.

\bibitem[GGK]{GGK} M. Green, P. Griffiths, and M. Kerr, \emph{N\'eron
models and limits of Abel-Jacobi mappings}, Compos. Math. 146 (2010),
288-366.

\bibitem[Gr]{Gr} P. Griffiths,\emph{ On the periods of certain rational
integrals II}, Ann. Math. 90 (1969), 496\textendash{}541.

\bibitem[He]{He} A. H\'enaut, \emph{Analytic web geometry}, in ``Web
Theory and Related Topics'' (Toulouse, 1996) (J. Grifone and \'E.
Salem, eds.), World Sci. Publishing Co., River Edge, NJ, 2001, 6\textendash{}47.

\bibitem[Ho]{Ho} I. Horozov, \emph{Reciprocity laws on algebraic
surfaces via iterated integrals} (with an appendix by I. Horozov and
M. Kerr), J. of $K$-theory 14 (2014), 273-312.

\bibitem[Ke1]{Ke1} M. Kerr, ``Geometric construction of regulator
currents with applications to algebraic cycles'', Princeton Univ.
Ph.D. Thesis, 2003.

\bibitem[Ke2]{Ke2} ---------, \emph{A regulator formula for Milnor
$K$-groups}, $K$-theory 29 (2003), no. 3, 175-210.

\bibitem[Ke3]{Ke3} ---------, \emph{An elementary proof of Suslin
reciprocity}, Canad. Math. Bull. 48 (2005) v.2, 221-236.

\bibitem[KL]{KL} M. Kerr and J. Lewis, \emph{The Abel-Jacobi map
for higher Chow groups, II}, Invent. Math. 170 (2007), 355-420.

\bibitem[KLM]{KLM} M. Kerr, J. Lewis and S. M\"uller-Stach, \emph{The
Abel-Jacobi map for higher Chow groups}, Compos. Math 142 (2006),
no. 2, 374-396.

\bibitem[Le]{Le} L. Lewin, ``Polylogarithms and associated functions'',
North-Holland, Amsterdam, 1981.

\bibitem[Lv]{Lv} M. Levine, \emph{Bloch's higher Chow groups revisited},
Asterisque 226 (1994), 235-320.

\bibitem[Pe]{Pe} O. Petras, \emph{Functional equations of the dilogarithm
in motivic cohomology}, J. of Number Theory 129 (2009), 2346-2368.

\bibitem[Pi]{Pi} L. Pirio, \emph{Abelian functional equations, planar
web geometry and polylogarithms}, Sel. math., New ser. 11 (2005),
453-489.

\bibitem[Wa]{Wa} X. Wang. \emph{Higher-order characteristic classes in
    arithmetic geometry}. Thesis Harvard, 1992. 

\end{thebibliography}
\end{document}